\newtheorem{theorem}{Theorem}[section]
\newtheorem{proposition}{Proposition}[section]
\newtheorem{corollary}{Corollary}[section]
\newtheorem{lemma}{Lemma}[section]
\newtheorem{example}{Example}%
\newtheorem{remark}{Remark}%
\newtheorem{definition}{Definition}%
\newcommand{\N}{\mathbb{N}}
\newcommand{\I}{\mathbb{I}}
\renewcommand{\P}{\mathbb{P}}
\newcommand{\E}{\mathbb{E}}
\newcommand{\R}{\mathbb{R}}
\newcommand{\AssumpAOneText}{(A1)}
\newcommand{\AssumpAOne}{\hyperref[assumpt:AssumpAOne]{\AssumpAOneText}}
\newcommand{\AssumpATwoText}{(A2)}
\newcommand{\AssumpATwo}{\hyperref[assumpt:AssumpATwo]{\AssumpATwoText}}
\newcommand{\AssumpAThreeText}{(A3)}
\newcommand{\AssumpAThree}{\hyperref[assumpt:AssumpAThree]{\AssumpAThreeText}}
\newcommand{\AssumpVelocityText}{(A4)}
\newcommand{\AssumpVelocity}{\hyperref[assumpt:AssumpVelocity]{\AssumpVelocityText}}
\newcommand{\AssumpVelocityPrimeText}{(A4')}
\newcommand{\smallSet}{K}
\begin{document}

\title[Tail Index Estimation for Discrete Heavy-Tailed Distributions]{Tail Index Estimation for Discrete Heavy-Tailed Distributions with Application to Statistical Inference for Regular Markov Chains}


\author[1]{\fnm{Patrice} \sur{Bertail}}
\email{patrice.bertail@parisnanterre.fr}
\equalcont{These authors contributed equally to this work.}
\author[2]{\fnm{Stephan} \sur{Cl\'emen\c{c}on}}
\email{stephan.clemencon@telecom-paris.fr}
\equalcont{These authors contributed equally to this work.}
\author*[1,2]{\fnm{Carlos} \sur{Fern\'andez}}
\email{fernandez@telecom-paris.fr}
\equalcont{These authors contributed equally to this work.}
\affil[1]{\orgdiv{MODAL'X, UMR CNRS 9023}, \orgname{Universit\'e Paris Nanterre},
    \orgaddress{\street{200 Avenue de la République}, \city{Nanterre}, \postcode{92000}, \state{Ile-de-France},
        \country{France}}
}

\affil*[2]{\orgdiv{LTCI, Telecom Paris}, \orgname{Institut Polytechnique de Paris},
    \orgaddress{\street{19 place Marguerite Perey}, \city{Palaiseau}, \postcode{91123}, \state{Ile-de-France},
        \country{France}}
}


\abstract{
    It is the purpose of this paper to investigate the issue
    of	estimating the regularity index $\beta>0$ of a discrete
    heavy-tailed r.v. $S$, \textit{i.e.} a r.v. $S$ valued
    in $\mathbb{N}^*$ such that $\mathbb{P}(S>n)=L(n)\cdot n^{-\beta}$
    for all $n\geq 1$, where $L:\mathbb{R}^*_+\to \mathbb{R}_+$
    is a slowly varying function. Such discrete probability laws, referred to as
    generalized Zipf's laws sometimes, are commonly used to
    model rank-size distributions after a preliminary range
    segmentation in a wide variety of areas such as
    \textit{e.g.} quantitative linguistics, social sciences
    or information theory. As a first go, we consider the
    situation where inference is based on independent copies
    $S_1,\; \ldots,\; S_n$ of the generic variable $S$. Just
    like the popular Hill estimator in the continuous
    heavy-tail situation, the estimator $\widehat{\beta}$ we
    propose can be derived by means of a suitable
    reformulation of the regularly varying condition,
    replacing $S$'s survivor function by its empirical
    counterpart. Under mild assumptions, a non-asymptotic
    bound for the deviation between $\widehat{\beta}$ and
    $\beta$ is established, as well as limit results
    (consistency and asymptotic normality). Beyond the
    i.i.d. case, the inference method proposed is extended
    to the estimation of the regularity index of a
    regenerative $\beta$-null recurrent Markov chain. Since
    the parameter $\beta$ can be then viewed as the tail
    index of the (regularly varying) distribution of the
    return time of the chain $X$ to any (pseudo-)
    regenerative set, in this case, the estimator is
    constructed from the successive regeneration times.
    Because the durations between consecutive regeneration
    times are asymptotically independent, we can prove that
    the consistency of the estimator promoted is preserved.
    In addition to the theoretical analysis carried out,
    simulation results provide empirical evidence of the
    relevance of the inference technique proposed.}
\keywords{generalized discrete Pareto distribution,
    nonparametric estimation, null recurrent Markov chain,
    regularity index, Zipf's law}
\pacs[MSC Classification]{60K35}

\maketitle

\section{Introduction}\label{sec1}

This article is devoted to the study of the problem of estimating the regularity index
$\beta>0$ of a generalized discrete Pareto distribution, namely the probability
distribution of a random variable $S$ defined on a probability space $(\Omega,\;
    \mathcal{F},\; \mathbb{P})$, taking its values in $\mathbb{N}^*$ and such that:
\begin{equation}\label{eq:Zipf}
    \P\left( {S > n} \right) = {n^{ - \beta }}L\left( n \right) \text{ for all } n\geq 1,
\end{equation}
where $L:\mathbb{R}_+\to \mathbb{R}$ is a slowly varying function, \textit{i.e.} such
that $L(\lambda z)/L(z)\rightarrow +1$ as $z\rightarrow +\infty$ for any $\lambda>0$, see
\cite{Bingham1987}.
Such discrete power law probability distributions, also referred to
as generalized Zipf's laws sometimes, are often used to model the distribution of
discrete data exhibiting a specific rank-frequency relationship, namely when the
logarithm of the frequency and that of the rank order are nearly proportional. Such a
phenomenon has been empirically observed in many ranking systems: in quantitative
linguistics (\textit{i.e.} when analysing word frequency law in natural language, see
\textit{e.g.} \cite{Manning}) in the first place, as well as in a very wide variety of
situations, too numerous to be exhaustively listed here. One may refer to \cite{SSBN18},
\cite{Lazzardi2021.06.16.448706} or \cite{Zanette} among many others. In this paper, we
first consider the issue of estimating the parameter $\beta$ involved in \eqref{eq:Zipf}
(supposedly unknown, like the function $L$) in the classic (asymptotic) i.i.d.
statistical setting, \textit{i.e.} based on an increasing number $n\geq 1$ of independent
copies $S_1,\; \ldots,\; S_n$ of the generic r.v. $S$. Statistical inference for discrete
heavy-tailed distributions has not received much attention in the literature and still poses methodological problems that this article seeks to resolve. Most of the
very few dedicated methods documented either deal with very specific cases as in
\textit{e.g.} \cite{goldstein2004}, \cite{Matsui2013} and \cite{clauset2009power} or else
consist in empirically applying techniques originally designed for continuous heavy-tailed
distributions to discrete data after first adding independent uniform noise \citep{Voitalov2019,kim2020consistency, kim2023asymptotic}.

The vast majority of the regular variation index estimators proposed in the literature,
Hill or Pickand estimators in particular \citep{Hill1975,Pickands1975}, are based on
order statistics, which causes obvious difficulties in the discrete case because of the
possible occurrence of many ties. Indeed, in that case, many spacings are equal to $0$
which causes these estimators to behave erroneously. This is stressed in \cite{Matsui2013}
(see their figures 1 and 2). More importantly, \cite{Matsui2013} constructs an example of
a family of discrete value heavy-tailed distributions that do not satisfy any
second-order condition so that the asymptotic normality of the Hill estimator cannot be
proved directly. In contrast, the estimator under study here is based on the analysis of
the probability of exponentially separated tail events. It simply rests on the fact that,
as can be immediately deduced from \eqref{eq:Zipf}, we have
$\ln(p_k)-\ln(p_{k+1})=\beta+\ln(L(e^{k})/L(e^{k+1}))$, where $\ln(x)$ denotes the
natural logarithm of any real number $x>0$ and $p_l=\mathbb{P}(S>e^{l})$ for all $l\in
    \mathbb{N}$, and that $L(e^{k + 1})/L(e^k)$ is expected to be very close to $1$ for $k\in
    \mathbb{N}$ chosen sufficiently large. A natural (plug-in) inference technique can be
then devised by replacing the tail probabilities $p_l$ with their empirical versions
$\widehat{p}^{(n)}_l=(1/n)\sum_{i=1}^n \mathbb{I}\{S_i>e^l\}$ for $l\in \mathbb{N}$,
where $\mathbb{I}\{\mathcal{A}\}$ means the indicator function of any event
$\mathcal{A}$. This yields the estimator
\begin{equation}\label{eq:est}
    \widehat{{\beta }}_n\left( k \right) = \ln \left( {\widehat p_k^{(n)}} \right) - \ln \left( \widehat p_{k + 1}^{(n)} \right),
\end{equation}
provided that $\widehat{p}_{k + 1}^{(n)}>0$ (as shall be seen, this occurs with large
probability if $n$ is sufficiently large). By convention, we set $\widehat{{\beta
            }}_n( k )=0$ when $\widehat{p}_{k + 1}^{(n)}=0$. We point out that it has
exactly the same form as that proposed and analysed in \cite{Carpentier2015} in a
different context, that of (continuous) \textit{approximately Pareto
    distributions}\footnote{The distribution of a real-valued r.v. $X$ is said to be
    \textit{approximately Pareto} with tail index $\beta>0$ if its survivor function is of
    the form: $\forall x>0$, $\mathbb{P}(X>x)=L(x)x^{-\beta}$, where $L$ is asymptotically
    constant at infinity, \textit{i.e.} there exists $C\in (0,\; \infty)$ s.t. $L(x)\to C$ as
    $x\rightarrow +\infty$.} namely. In the discrete generalized Pareto framework, we prove
that for an appropriate choice of the hyper-parameter \(k=k_n\) (typically chosen of order
\(\ln(n)\)), the estimator \eqref{eq:est} is strongly consistent and asymptotically
normal as $n\rightarrow +\infty$. We also show that  non-asymptotic upper confidence bounds for the absolute
deviations between $\widehat{{\beta }}_n\left( k \right)$ and $\beta$ are also
established here.

Although estimation of the parameter $\beta$ in \eqref{eq:Zipf} in the discrete i.i.d.
setting is an important issue in itself, the present paper also finds its motivation in
the problem of recovering statistically the regularity index of a
regenerative \textit{regular} $\beta$-null-recurrent Markov chain $X=(X_n)_{n\in
            \mathbb{N}}$, based on the observation of a finite sample path $X_1,\; \ldots,\; X_n$
with $n\geq 1$. As explained in \cite{Chen1999, Chen2000}, for regular
Markov chains, the regularity index \(\beta\) controls the (sublinear) rate at
which the number of visits to any given Harris set increases with observation time $n$,
no matter the initial distribution. These Markov chains are strongly
non-stationary and are not mixing in any sense. Typical examples of these chains
are random walks and their variations, for
instance Bessel random walks (see \cite{Alexander2011}). Such
processes appear also naturally in many applications: for population dynamics or
stochastic growth models in demography, see for instance \cite{Adam2016} and the
references therein; for branching processes, Galton-Watson processes with immigration (or
population dependent) also exhibit a fat tail behaviour of the time returns of the chain,
see \cite{Pakes1971, Zubkov1972, KLEBANER1993115}.

In the \textit{regenerative} case (\textit{i.e.} when the chain $X$ possesses an
\textit{accessible atom}, a Harris set on which the transition probability is constant),
the distribution of the regenerative time, the return time to the atom, is a discrete
generalized Pareto \eqref{eq:Zipf} and the parameter $\beta$ is its tail index. Due to
the non-standard behaviour of traditional estimators in this context, statistical
inference for null-recurrent Markov chains is very poorly documented in the literature
\citep{Tjostheim-2001,Han-2010, Myklebust-2012,Gao2013} and, to the best of our
knowledge, estimation of the key quantity $\beta$ has not received much attention. It is
also the goal of this article to extend the use of the estimator \eqref{eq:est} to the
case where the $S_i$'s are the successive durations between the consecutive regeneration
times up to time $n$. The main difficulty naturally arises from the fact that the number
$1+N_n\geq 0$ of regeneration times (and thus the number of durations) is now random, and
it is not independent of the variables $S_1, \ldots, S_{N_n}$ (in
particular, $S_1+\cdots+S_{N_n}\leq n$ by construction). We show that the estimator
remains strongly consistent in this scenario and we provide non-asymptotic upper confidence bounds
for the absolute deviations between $\widehat{{\beta }}_{N_n}\left( k \right)$ and $\beta$.
For illustration purposes, numerical
experiments have been carried out, providing empirical evidence of the relevance of the
estimation method promoted. Extension to the general case of (pseudo-regenerative)
null-recurrent chains is also discussed, the difficulties inherent in applying the
methodology originally proposed in \cite{Bertail2006} in the positive recurrent case to
mimic regenerative Nummelin extensions \citep{Nummelin1984} being explained at length.

The paper is organized as follows. A thorough analysis of the behaviour of the estimator
\eqref{eq:est} in the i.i.d. case, illustrated by numerical experiments, is first carried
out in section \ref{sec:tail-index-estimation}. Some of the results thus established (consistency and non-asymptotic bounds)
are next extended in section \ref{sec:background} to the regenerative regular Markovian
setup, when the estimator is computed based on a single finite-length trajectory of the
atomic chain. Experimental results are also displayed and the main barrier to the
extension of the methodology promoted to general (\textit{i.e.} pseudo-regenerative)
regular null-recurrent chains is also discussed therein. Technical proofs are deferred to
the \hyperref[sec:proofs]{Appendix section}.

\section{Tail Index Estimation - The Discrete Heavy-Tailed i.i.d. Case}\label{sec:tail-index-estimation}

Throughout this section, \(S_1,\ldots,S_n\) are independent copies of a generic discrete
generalized Pareto r.v. $S$, \textit{i.e.} a r.v. $S$ with survivor function of type
\eqref{eq:Zipf}, where the parameter $\beta>0$ and the slowly varying function $L$ are
supposedly unknown. As a first go, we start to investigate the (asymptotic) behaviour of
the estimator \eqref{eq:est} in this basic general framework and next develop the
analysis in particular situations, \textit{i.e.} when the function $L$ has a specific
form.

\subsection{Main Results - Confidence Bounds and Limit Theorems}\label{subsec:main_results}

As explained in the Introduction section, the estimator \eqref{eq:est} can be viewed as
an empirical counterpart of the quantity
\begin{equation}\label{beta_k_eq}
    \beta(k)=\ln(p_k)-\ln(p_{k+1})=\beta+\ln\left(\frac{L(e^{k})}{L(e^{k+1})}\right),
\end{equation}
see \eqref{eq:Zipf}, which tends to $\beta$ as $k\rightarrow \infty$ by virtue of the
slow variation property of $L$. As previously emphasized, unless the function $L$ is
supposed to be asymptotically constant (\textit{i.e.} there exists $C>0$ s.t.
$L(x)\rightarrow C$ as $x\rightarrow +\infty$), the discrete generalized Pareto model
\eqref{eq:Zipf} is not a discrete version of the (continuous) approximately
$\beta$-Pareto model considered in \cite{Carpentier2015} and, consequently, the validity
framework established therein does not apply directly here. The proposition below
provides an upper confidence bound for the absolute deviations between \eqref{eq:est} and
$\beta$ (respectively, between \eqref{eq:est} and $\beta(k)$).

\begin{proposition}\label{bias_variance_thm}
    Let \(\delta\in (0,\; 1/2)\) and set \(u_n( \delta) = \ln ( {2/\delta })/n\) for all $n\geq 1$. If \(k\geq 1\) is such that \(p_{k+1}\geq 16 u_n(\delta)\), then, with probability at least \(1-2\delta\), we have:
    \begin{equation}\label{rateConvergence}
        \left|\widehat{\beta}_{n}(k)-\beta\right|\leq 6\sqrt{\frac{u_n (\delta)}{p_{k+1}}} + \left| \ln\left(\frac{L(e^{k})}{L(e^{k+1})}\right) \right|.
    \end{equation}
\end{proposition}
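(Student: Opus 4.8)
The plan is to split the error into a stochastic fluctuation and a deterministic bias by inserting the intermediate quantity $\beta(k)$ of \eqref{beta_k_eq}. By the triangle inequality,
\begin{equation*}
    \left|\widehat{\beta}_n(k) - \beta\right| \leq \left|\widehat{\beta}_n(k) - \beta(k)\right| + \left|\beta(k) - \beta\right|,
\end{equation*}
and the second summand is exactly $\left|\ln(L(e^{k})/L(e^{k+1}))\right|$ by \eqref{beta_k_eq}, which is precisely the deterministic term on the right-hand side of \eqref{rateConvergence}. Everything thus reduces to the stochastic term, which by definition of the estimator equals $\left|\ln(\widehat{p}_k^{(n)}/p_k) - \ln(\widehat{p}_{k+1}^{(n)}/p_{k+1})\right|$.

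I would then observe that, for each fixed $l$, $\widehat{p}_l^{(n)}$ is an empirical mean of i.i.d. Bernoulli$(p_l)$ variables whose variance $p_l(1-p_l)$ is at most $p_l$. A variance-aware (Bernstein-type) concentration inequality then gives, with probability at least $1-\delta$,
\begin{equation*}
    \left|\widehat{p}_l^{(n)} - p_l\right| \leq \sqrt{2\, p_l\, u_n(\delta)} + \tfrac{2}{3}\, u_n(\delta),
\end{equation*}
recalling $u_n(\delta)=\ln(2/\delta)/n$; dividing through by $p_l$ turns this into a \emph{relative} deviation bound for $\widehat{p}_l^{(n)}/p_l$. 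Applying this to $l=k$ and to $l=k+1$ and combining the two events by a union bound produces the probability $1-2\delta$ announced in the statement.

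The hard part will be passing from the relative deviations of the $\widehat{p}_l^{(n)}$ to those of their logarithms, since $\ln$ blows up near $0$ and, a priori, $\widehat{p}_{k+1}^{(n)}$ could even vanish (recall the estimator is then set to $0$ by convention). This is exactly where the hypothesis $p_{k+1}\geq 16\, u_n(\delta)$ is used: since $p_k\geq p_{k+1}$ by monotonicity of the survivor function, it forces $u_n(\delta)/p_l\leq 1/16$ for both $l\in\{k,k+1\}$, so that on the good event the relative fluctuations are bounded by a constant strictly below $1/2$; in particular $\widehat{p}_{k+1}^{(n)}>0$ and the estimator is well defined. On this range $\ln$ is Lipschitz, and an elementary bound of the form $|\ln(1+x)|\leq 2|x|$ valid for $|x|\leq 1/2$ converts each relative deviation into a bound of order $\sqrt{u_n(\delta)/p_l}$.

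To conclude I would sum the two contributions, use $p_k\geq p_{k+1}$ to express both in terms of $p_{k+1}$, and absorb the lower-order $u_n(\delta)/p_{k+1}$ term by writing it as $\sqrt{u_n(\delta)/p_{k+1}}\cdot\sqrt{u_n(\delta)/p_{k+1}}$ and invoking $\sqrt{u_n(\delta)/p_{k+1}}\leq 1/4$. The generous slack left by the $1/2$ threshold on the fluctuations leaves ample room, so that careful constant bookkeeping brings the leading factor to a value no larger than $6$. The only genuine labour is this tracking of constants; the backbone of the argument is the two-term decomposition followed by variance-aware concentration, with the threshold condition playing the single role of keeping the logarithms away from their singularity.
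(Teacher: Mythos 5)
Your decomposition, concentration step, and use of the threshold are exactly those of the paper: it too writes $|\widehat{\beta}_n(k)-\beta| \le |\widehat{\beta}_n(k)-\beta(k)| + |\ln(L(e^k)/L(e^{k+1}))|$, applies a Bernstein inequality for Bernoulli means to $\widehat p_k^{(n)}$ and $\widehat p_{k+1}^{(n)}$ separately (quoted in the packaged form $|\widehat p_l^{(n)}-p_l|\le 2\sqrt{p_l u_n(\delta)}$, valid when $p_l\ge 4u_n(\delta)$), takes a union bound to get probability $1-2\delta$, passes to logarithms using $p_{k+1}\ge 16u_n(\delta)$ together with the monotonicity $p_k\ge p_{k+1}$, and sums the two contributions.

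The one step that does not close as you wrote it is the final constant. With your symmetric bound $|\ln(1+x)|\le 2|x|$, each logarithmic deviation is at most $2\bigl(\sqrt{2u_n(\delta)/p_l}+\tfrac{2}{3}u_n(\delta)/p_l\bigr)$; summing over $l\in\{k,k+1\}$, bounding both terms via $p_{k+1}$, and absorbing $u_n(\delta)/p_{k+1}\le \tfrac14\sqrt{u_n(\delta)/p_{k+1}}$ yields a leading factor $4\sqrt{2}+\tfrac{2}{3}\approx 6.33$, which overshoots $6$: there is in fact no slack with these ingredients. The repair is what the paper does: replace the symmetric bound by the asymmetric pair $\ln(1+x)\le x$ and $\ln(1-x)\ge -\tfrac32 x$, valid for $0\le x\le 1/2$ (a range your remark $u_n(\delta)/p_l\le 1/16$ guarantees), so that each logarithmic deviation costs at most $\tfrac32$ times the relative one. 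With your Bernstein constants this gives a total of $\bigl(3\sqrt{2}+\tfrac12\bigr)\sqrt{u_n(\delta)/p_{k+1}}<6\sqrt{u_n(\delta)/p_{k+1}}$; with the paper's packaged form $2\sqrt{p_l u_n(\delta)}$ it gives per-term constant $3$ and total exactly $6$. Everything else in your plan, including the observation that on the good event $\widehat p^{(n)}_{k+1}>0$ so the estimator is well defined (a point the paper leaves implicit), is sound.
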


Refer to the Appendix section for the technical proof. The bound \eqref{rateConvergence}
reveals some sort of ``'bias-variance'' trade-off, ruled by the hyperparameter $k>0$. The
second term on the right-hand side can be viewed as the bias of the inference method,
insofar as the estimator \eqref{eq:est} can be seen as an empirical version of the
approximation \eqref{beta_k_eq}. It decays to $0$ as $k$ increases towards infinity,
while the first term, whose presence is due to the random nature of the estimator, tends
to $+\infty$. We point out that \textit{second-order slow variation conditions}
\citep{GoldieSmith87} are required to bound the (vanishing) bias term in
\eqref{rateConvergence}, as shall be explained in subsection \ref{subsec:examples}. The
following result reveals that for an appropriate choice of \(k=k_n\), the estimator
\eqref{eq:est} is strongly consistent.

\begin{theorem}[Strong consistency]\label{consistency}
    Suppose that, as $n\rightarrow +\infty$, we have \(k_n\to +\infty\) and $(\ln n)\exp(k_n\beta)/n=o(L(\exp(k_n))$. Then, we have:
    \begin{equation*}
        \widehat{\beta}_n(k_n)\to\beta \text{ almost surely, as } n\rightarrow +\infty.
    \end{equation*}
\end{theorem}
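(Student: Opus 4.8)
The plan is to derive the almost sure convergence from the non-asymptotic bound in Proposition \ref{bias_variance_thm}, controlling both the stochastic fluctuation term and the deterministic bias term under the stated growth condition on $k_n$.

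Let me think about what the growth condition gives us. We have $(\ln n)\exp(k_n\beta)/n = o(L(\exp(k_n)))$. Since $p_{k_n+1} = \mathbb{P}(S > e^{k_n+1}) = e^{-(k_n+1)\beta} L(e^{k_n+1})$, the quantity $u_n(\delta)/p_{k_n+1}$ involves $\ln(2/\delta)/(n \cdot e^{-(k_n+1)\beta} L(e^{k_n+1}))$, which up to constants is $(\ln(1/\delta)) e^{k_n\beta}/(n L(e^{k_n}))$. The growth condition says precisely that $e^{k_n\beta}\ln n /(n L(e^{k_n})) \to 0$, so if we take $\delta = \delta_n$ decaying like a power of $1/n$ (so that $\ln(2/\delta_n)$ is comparable to $\ln n$), then $u_n(\delta_n)/p_{k_n+1} \to 0$. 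This is what makes the first term on the right-hand side of \eqref{rateConvergence} vanish.

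So the concrete plan:

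First I would verify that, for a suitable choice of $\delta_n$, the condition $p_{k_n+1} \geq 16 u_n(\delta_n)$ eventually holds. I would choose $\delta_n = 1/n^2$ (or any summable sequence with $\ln(1/\delta_n) \asymp \ln n$), so that $u_n(\delta_n) = \ln(2n^2)/n \asymp (\ln n)/n$. Using $p_{k_n+1} = e^{-(k_n+1)\beta} L(e^{k_n+1})$ together with the slow-variation property $L(e^{k_n+1})/L(e^{k_n}) \to 1$, the growth hypothesis $(\ln n)e^{k_n\beta}/n = o(L(e^{k_n}))$ gives $u_n(\delta_n)/p_{k_n+1} \to 0$, and in particular $p_{k_n+1} \geq 16 u_n(\delta_n)$ for all $n$ large enough. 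Hence Proposition \ref{bias_variance_thm} applies for such $n$, and on an event $E_n$ of probability at least $1 - 2\delta_n = 1 - 2/n^2$,
\begin{equation*}
    \left|\widehat{\beta}_n(k_n) - \beta\right| \leq 6\sqrt{\frac{u_n(\delta_n)}{p_{k_n+1}}} + \left|\ln\left(\frac{L(e^{k_n})}{L(e^{k_n+1})}\right)\right|.
\end{equation*}
The first term tends to $0$ by the computation above, and the second tends to $0$ by slow variation (with $\lambda = e$), so the right-hand side converges to $0$.

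Next I would convert this high-probability control into almost sure convergence. Since $\sum_n 2\delta_n = \sum_n 2/n^2 < \infty$, the Borel–Cantelli lemma yields $\mathbb{P}(E_n^c \text{ i.o.}) = 0$, so almost surely the bound above holds for all sufficiently large $n$. Combined with the deterministic convergence of the right-hand side to $0$, this gives $\widehat{\beta}_n(k_n) \to \beta$ almost surely.

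The main obstacle I anticipate is the interplay between the confidence level $\delta_n$ and the growth condition: one must check that shrinking $\delta_n$ fast enough to make Borel–Cantelli applicable (summable) does not destroy the vanishing of the variance term. This is exactly why the hypothesis is stated with the factor $\ln n$ rather than a constant — choosing $\delta_n$ polynomially small in $n$ keeps $\ln(2/\delta_n) = O(\ln n)$, which is absorbed precisely by that logarithmic factor in the growth condition. A secondary technical point is handling the convention $\widehat{\beta}_n(k_n) = 0$ when $\widehat{p}_{k_n+1}^{(n)} = 0$: on the event $E_n$ the empirical tail probability is bounded below (away from $0$), so this degenerate case is excluded for large $n$ and does not interfere with the argument.
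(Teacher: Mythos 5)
Your proposal is correct and follows essentially the same route as the paper's own proof: the paper likewise takes $\delta = 2/n^2$ (so $u_n(\delta) \asymp (\ln n)/n$), uses the growth hypothesis together with slow variation ($L(e^{k_n+1}) \sim L(e^{k_n})$) to verify $p_{k_n+1} \geq 16\, u_n(\delta)$ eventually, splits the deviation into the stochastic term and the bias term $|\ln(L(e^{k_n})/L(e^{k_n+1}))| \to 0$, and concludes by Borel--Cantelli from the summability of the failure probabilities. The only cosmetic difference is that you invoke Proposition \ref{bias_variance_thm} as a black box while the paper re-runs its ingredients (Lemma \ref{varianceBoundLemma} plus the convergence $\beta(k_n)\to\beta$) with an explicit $\epsilon$--$N$ argument; your handling of the degenerate case $\widehat{p}^{(n)}_{k_n+1}=0$ is a nice touch that the paper leaves implicit.
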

In particular, as stated below, strong consistency is guaranteed when \(k_n\) is of logarithmic order.

\begin{corollary}\label{ln_strong_consistency}
    Let \(0<A<1/\beta\). Then, we have:
    \begin{equation*}
        \widehat{\beta}_n({A\ln n})\to\beta \text{ almost surely, as } n\rightarrow +\infty.
    \end{equation*}
\end{corollary}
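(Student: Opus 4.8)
The plan is to deduce the corollary directly from Theorem~\ref{consistency} by checking that the choice $k_n = A\ln n$ satisfies its two hypotheses. The first, $k_n \to +\infty$, is immediate since $A>0$. For the second, I would substitute $k_n = A\ln n$ into the condition and use $\exp(k_n\beta) = n^{A\beta}$ to rewrite the left-hand quantity as
\begin{equation*}
    \frac{(\ln n)\exp(k_n\beta)}{n} = (\ln n)\, n^{A\beta - 1}.
\end{equation*}
Since $A<1/\beta$ forces $A\beta - 1 < 0$, this expression already tends to $0$; the genuine content of the argument is to show that it is negligible compared with $L(\exp(k_n)) = L(n^A)$, which may itself vanish.

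The key step is therefore a lower bound on the slowly varying function $L$. Here I would invoke the elementary consequence of slow variation (see \cite{Bingham1987}) that for every $\delta>0$ one has $x^{\delta}L(x)\to +\infty$ as $x\to +\infty$, so that $L(x)\geq x^{-\delta}$ for all $x$ large enough. Writing $x = n^A$, so that $\ln n = (1/A)\ln x$ and $n^{A\beta - 1} = x^{\beta - 1/A}$, and setting $\gamma := 1/A - \beta > 0$, I would pick any $\delta\in(0,\gamma)$ and estimate
\begin{equation*}
    \frac{(\ln n)\, n^{A\beta - 1}}{L(n^A)} = \frac{1}{A}\,(\ln x)\, x^{-\gamma}\,\frac{1}{L(x)} \leq \frac{1}{A}\,(\ln x)\, x^{\delta - \gamma} \longrightarrow 0,
\end{equation*}
where the limit holds because $\delta - \gamma < 0$ dominates the logarithmic factor. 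This establishes $(\ln n)\exp(k_n\beta)/n = o\bigl(L(\exp(k_n))\bigr)$, so Theorem~\ref{consistency} applies and yields $\widehat{\beta}_n(A\ln n)\to\beta$ almost surely.

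The step I would single out as the main obstacle is precisely this comparison with $L(n^A)$: it is not enough to observe that $(\ln n)\, n^{A\beta-1}\to 0$, because a slowly varying function can tend to $0$, and one must rule out that $L$ decays faster than the power $n^{A\beta-1}$. The comparison succeeds only because slow variation forces any such decay to be strictly slower than every power, which is exactly the role of the bound $L(x)\geq x^{-\delta}$. The strict inequality $A<1/\beta$ is what provides the positive margin $\gamma = 1/A - \beta$ within which $\delta$ can be chosen, and this margin is what lets the power $x^{\delta-\gamma}$ win against both $L$'s decay and the logarithmic factor.
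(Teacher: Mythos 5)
Your proof is correct and takes essentially the same route as the paper: both reduce the corollary to checking the hypothesis of Theorem~\ref{consistency} for $k_n = A\ln n$, and both rest on the same fact about slowly varying functions, namely $x^{\delta}L(x)\to+\infty$ for every $\delta>0$ (Proposition 1.3.6.v in \cite{Bingham1987}), which is precisely what rules out $L$ decaying faster than a power. The only cosmetic difference is bookkeeping: the paper splits $n^{1-A\beta}$ into two factors $n^{(1-A\beta)/2}$, one absorbing $\ln n$ and the other absorbing $1/L(n^A)$, whereas you work with an explicit margin $\delta\in(0,\,1/A-\beta)$ and the eventual bound $L(x)\geq x^{-\delta}$.
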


Now, the following results establish the asymptotic normality of the deviation between
\eqref{eq:est} and $\beta(k_n)$, when appropriately normalized.

\begin{theorem}[Asymptotic normality]\label{asymptotic_normality}
    Suppose that \(k_n\) satisfies the conditions of Theorem \ref{consistency}, then
    \begin{itemize}
        \item[(i)] Then, as $n\to +\infty$, we have the convergence in distribution:
              $$\sqrt {n p_{k_n}} \left(\widehat \beta_n ({k_n}) - \beta(k_n) \right) \Rightarrow \mathcal{N}\left(0,\; e^{\beta}-1\right).
              $$
        \item[(ii)]In addition, asymptotic normality holds true for the 'standardized' deviation:  $$\frac{{\sqrt {n\widehat p_{{k_n}}^{(n)}} \left( {{{\widehat \beta }_n}\left( {{k_n}} \right) - \beta \left( {{k_n}} \right)} \right)}}{{\sqrt {{e^{{{\widehat \beta }_n}\left( {{k_n}} \right)}} - 1} }}\Rightarrow \mathcal{N}\left( {0,\; 1} \right), \text{ as } n\to +\infty.$$
    \end{itemize}
\end{theorem}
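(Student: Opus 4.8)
The plan is to prove (i) by a delta-method linearization feeding into a triangular-array central limit theorem, and then to deduce (ii) from (i) by Slutsky's lemma. Throughout, the crucial consequence of the hypotheses is that $np_{k_n}\to+\infty$: since $p_{k_n}=e^{-k_n\beta}L(e^{k_n})$, the consistency condition $(\ln n)e^{k_n\beta}/n=o(L(e^{k_n}))$ is exactly $\ln(n)/(np_{k_n})\to 0$, forcing $np_{k_n}\to+\infty$. First I would linearize: writing $\xi_n=(\widehat p^{(n)}_{k_n}-p_{k_n})/p_{k_n}$ and $\eta_n=(\widehat p^{(n)}_{k_n+1}-p_{k_n+1})/p_{k_n+1}$, one has
$$\widehat\beta_n(k_n)-\beta(k_n)=\ln(1+\xi_n)-\ln(1+\eta_n).$$
Since $\Var(\xi_n)=(1-p_{k_n})/(np_{k_n})$ and likewise for $\eta_n$, both fluctuations are $O_{\P}(1/\sqrt{np_{k_n}})=o_{\P}(1)$; using $|\ln(1+x)-x|\le x^2$ for $|x|\le 1/2$, the quadratic remainder satisfies $\sqrt{np_{k_n}}\,(\xi_n^2+\eta_n^2)=O_{\P}(1/\sqrt{np_{k_n}})=o_{\P}(1)$. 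Hence it suffices to establish the CLT for the linear part $\sqrt{np_{k_n}}(\xi_n-\eta_n)$.

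Then I would cast this linear part as a normalized sum of (for fixed $n$) i.i.d. centered summands, $\sqrt{np_{k_n}}(\xi_n-\eta_n)=\sqrt{p_{k_n}/n}\sum_{i=1}^n Z_{i,n}$, with
$$Z_{i,n}=\frac{\I\{S_i>e^{k_n}\}-p_{k_n}}{p_{k_n}}-\frac{\I\{S_i>e^{k_n+1}\}-p_{k_n+1}}{p_{k_n+1}}.$$
The decisive structural feature is the nesting $\{S_i>e^{k_n+1}\}\subseteq\{S_i>e^{k_n}\}$: it makes $Z_{i,n}=0$ on $\{S_i\le e^{k_n}\}$ and bounded by $C/p_{k_n}$ on the complementary rare event (of probability $p_{k_n}$), and it gives $\mathrm{Cov}(\I\{S>e^{k_n}\},\I\{S>e^{k_n+1}\})=p_{k_n+1}(1-p_{k_n})$. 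A short computation then produces a clean cancellation,
$$\Var\!\left(\sqrt{\tfrac{p_{k_n}}{n}}\sum_{i=1}^n Z_{i,n}\right)=p_{k_n}\!\left(\frac{1}{p_{k_n+1}}-\frac{1}{p_{k_n}}\right)=e^{\beta(k_n)}-1\longrightarrow e^\beta-1,$$
using $\beta(k_n)\to\beta$. For the Lyapunov condition, the rare-event bound gives $\E|Z_{1,n}|^3\le (C/p_{k_n})^3 p_{k_n}=C^3/p_{k_n}^2$, whence
$$\sum_{i=1}^n\E\big|\sqrt{\tfrac{p_{k_n}}{n}}\,Z_{i,n}\big|^3=n\Big(\tfrac{p_{k_n}}{n}\Big)^{3/2}\E|Z_{1,n}|^3\le \frac{C^3}{\sqrt{np_{k_n}}}\longrightarrow 0,$$
so the Lyapunov CLT for triangular arrays yields (i).

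Finally, (ii) would follow by Slutsky's lemma. Theorem \ref{consistency} gives $\widehat\beta_n(k_n)\to\beta$ a.s., so $e^{\widehat\beta_n(k_n)}-1\to e^\beta-1$, while $\Var(\widehat p^{(n)}_{k_n}/p_{k_n})=(1-p_{k_n})/(np_{k_n})\to 0$ gives $\widehat p^{(n)}_{k_n}/p_{k_n}\to 1$ in probability. Writing
$$\frac{\sqrt{n\widehat p^{(n)}_{k_n}}\big(\widehat\beta_n(k_n)-\beta(k_n)\big)}{\sqrt{e^{\widehat\beta_n(k_n)}-1}}=\frac{\sqrt{\widehat p^{(n)}_{k_n}/p_{k_n}}}{\sqrt{e^{\widehat\beta_n(k_n)}-1}}\cdot\sqrt{np_{k_n}}\big(\widehat\beta_n(k_n)-\beta(k_n)\big),$$
the prefactor tends to $1/\sqrt{e^\beta-1}$ in probability, so combining with (i) and Slutsky's lemma gives convergence to $\mathcal N(0,1)$.

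The main obstacle I anticipate is the triangular-array CLT itself: the summands $Z_{i,n}$ change with $n$ and have variances exploding like $1/p_{k_n}$, so only the rare-event structure ($Z_{i,n}=0$ off an event of vanishing probability $p_{k_n}$) keeps the Lyapunov ratio under control, and this is precisely where $np_{k_n}\to+\infty$ — i.e. the consistency hypothesis — is indispensable. The same condition is what legitimizes both the negligibility of the Taylor remainder and the convergence $\widehat p^{(n)}_{k_n}/p_{k_n}\to 1$ used for (ii).
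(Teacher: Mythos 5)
Your proof is correct, but it follows a genuinely different route from the paper's. The paper does not use a bare triangular-array CLT: it invokes an empirical-process result in the style of Drees and Rootz\'en (its Lemma \ref{dress_and_rotzen}), verifying conditions \AssumpAOne--\AssumpAFive\ with an auxiliary block sequence $r_n=k_n$ to get a \emph{bivariate} CLT for the two normalized tail sums $\widetilde Z_n(\phi_1),\widetilde Z_n(\phi_2)$; it then linearizes the ratio $\widehat\lambda_n=\widehat p^{(n)}_{k_n}/\widehat p^{(n)}_{k_n+1}$ around $\lambda_n=p_{k_n}/p_{k_n+1}$ (Lemma \ref{weakly_convergence_ratio}, with variance $e^{2\beta}(e^\beta-1)$) and finally transfers to the logarithmic scale via a dedicated delta-method lemma (Lemma \ref{log_transform_weakly_conv}). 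You instead pass to logarithms first, via the exact identity $\widehat\beta_n(k_n)-\beta(k_n)=\ln(1+\xi_n)-\ln(1+\eta_n)$, and apply a plain Lyapunov CLT to the single linear combination $Z_{i,n}$; the two structural observations that make this work --- the exact cancellation $Z_{i,n}=0$ on $\{S_i\le e^{k_n}\}$ (so the summands live on a rare event of probability $p_{k_n}$, taming the third moments) and the nesting covariance identity yielding the clean variance $e^{\beta(k_n)}-1$ --- are sound, and your variance and Lyapunov computations check out. Your approach buys simplicity and self-containedness: it needs only $np_{k_n}\to+\infty$, and in particular never uses the hypothesis $k_n=o(n)$, which in the paper enters solely to satisfy condition \AssumpAOne\ of the Drees--Rootz\'en lemma. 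What the paper's heavier machinery buys is reusability: the linearization \eqref{eq:linearization} and Lemma \ref{dress_and_rotzen} are recycled verbatim in the Appendix's treatment of the positive recurrent Markov case (Lemma \ref{lemma:dress_and_rotzen}), where your i.i.d.-specific argument would need reworking. For part (ii) both proofs are essentially identical (Slutsky plus consistency of $\widehat p^{(n)}_{k_n}/p_{k_n}$), except the paper establishes that consistency almost surely via Bernstein and Borel--Cantelli, while your Chebyshev argument gives it in probability, which suffices. Two cosmetic points you leave implicit: the convention $\widehat\beta_n(k_n)=0$ on $\{\widehat p^{(n)}_{k_n+1}=0\}$ is harmless because that event is contained in $\{|\eta_n|>1/2\}$, whose probability vanishes; and the Lyapunov ratio should formally be normalized by $\bigl(\Var\sum_i X_{i,n}\bigr)^{3/2}$, which is legitimate since that variance tends to $(e^\beta-1)^{3/2}>0$.
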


The asymptotic normality results above can be extended to the deviation between
\eqref{eq:est} and $\beta$, provided that the bias term $\beta(k_n)-\beta$ vanishes at an
appropriate rate, as stated below.

\begin{corollary}\label{asymptotic_normality_real_value}
    Suppose that the conditions of Theorem \ref{asymptotic_normality} are fulfilled. In addition, assume that \(k_n\) is such that
    \begin{equation}\label{eq:cond1}
        \sqrt {np_{k_n}} \left( {1 - \frac{{L\left( {{e^{{k_n}}}} \right)}}{{L\left( {{e^{{k_n} + 1}}} \right)}}} \right) \to 0, \text{ as } n\to +\infty.
    \end{equation}
    \begin{itemize}
        \item[(i)]
              Then, we have the convergence in distribution
              \[\sqrt {np_{k_n}} \left( {\widehat \beta_n \left( {{k_n}} \right) - \beta } \right) \Rightarrow \mathcal{N}(0,\; e^\beta-1) \text{ as } n\rightarrow +\infty.\]
        \item[(ii)] In addition, the ``studentized'' version is asymptotically normal:
              \[\frac{{\sqrt {n\widehat p_{{k_n}}^{(n)}} \left( {{{\widehat \beta }_n}(k_n) - \beta } \right)}}{{\sqrt {{e^{{{\widehat \beta }_n}(k_n)}} - 1} }}\Rightarrow \mathcal{N}(0,1) \text{ as } n\to +\infty.\]
    \end{itemize}
\end{corollary}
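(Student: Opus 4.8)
The plan is to reduce the statement to Theorem \ref{asymptotic_normality} by isolating the deterministic bias term $\beta(k_n)-\beta$ and showing that, once multiplied by the normalising factor, it is negligible under the extra hypothesis \eqref{eq:cond1}. Recall from \eqref{beta_k_eq} that $\beta(k_n)-\beta=\ln\bigl(L(e^{k_n})/L(e^{k_n+1})\bigr)$ is a non-random quantity. For part (i), I would start from the decomposition
\begin{equation*}
\sqrt{np_{k_n}}\bigl(\widehat{\beta}_n(k_n)-\beta\bigr)=\sqrt{np_{k_n}}\bigl(\widehat{\beta}_n(k_n)-\beta(k_n)\bigr)+\sqrt{np_{k_n}}\bigl(\beta(k_n)-\beta\bigr),
\end{equation*}
whose first summand converges in distribution to $\mathcal{N}(0,\,e^\beta-1)$ by Theorem \ref{asymptotic_normality}(i). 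It then remains only to check that the second, deterministic summand tends to $0$, after which Slutsky's lemma yields the claim.

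To control the bias term, set $r_n=L(e^{k_n})/L(e^{k_n+1})$. By the slow variation of $L$ (applied with $z=e^{k_n}\to+\infty$ and $\lambda=e$) we have $r_n\to 1$, hence $\beta(k_n)-\beta=\ln r_n\to 0$. Linearising the logarithm around $1$, and writing $x_n=r_n-1\to 0$, gives $\ln r_n=\ln(1+x_n)=x_n(1+o(1))=-(1-r_n)(1+o(1))$, so that
\begin{equation*}
\sqrt{np_{k_n}}\bigl(\beta(k_n)-\beta\bigr)=-\sqrt{np_{k_n}}\,(1-r_n)(1+o(1))\to 0
\end{equation*}
precisely by hypothesis \eqref{eq:cond1}. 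This proves part (i).

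For part (ii), I would proceed along the same lines, using the studentized limit of Theorem \ref{asymptotic_normality}(ii) in place of (i). Writing
\begin{equation*}
\frac{\sqrt{n\widehat{p}_{k_n}^{(n)}}\bigl(\widehat{\beta}_n(k_n)-\beta\bigr)}{\sqrt{e^{\widehat{\beta}_n(k_n)}-1}}=\frac{\sqrt{n\widehat{p}_{k_n}^{(n)}}\bigl(\widehat{\beta}_n(k_n)-\beta(k_n)\bigr)}{\sqrt{e^{\widehat{\beta}_n(k_n)}-1}}+\frac{\sqrt{n\widehat{p}_{k_n}^{(n)}}\bigl(\beta(k_n)-\beta\bigr)}{\sqrt{e^{\widehat{\beta}_n(k_n)}-1}},
\end{equation*}
the first summand tends to $\mathcal{N}(0,1)$ by Theorem \ref{asymptotic_normality}(ii). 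For the second summand I would use that $\widehat{\beta}_n(k_n)\to\beta$ almost surely (Theorem \ref{consistency}), so the denominator converges to $\sqrt{e^\beta-1}>0$, together with the ratio consistency $\widehat{p}_{k_n}^{(n)}/p_{k_n}\to 1$ in probability. The latter follows from the fact that $n\widehat{p}_{k_n}^{(n)}$ is $\mathrm{Binomial}(n,p_{k_n})$ with $np_{k_n}\to+\infty$ under the assumptions of Theorem \ref{consistency} (whence $\Var(\widehat{p}_{k_n}^{(n)}/p_{k_n})=(1-p_{k_n})/(np_{k_n})\to 0$ and Chebyshev applies). Factoring $\sqrt{n\widehat{p}_{k_n}^{(n)}}=\sqrt{np_{k_n}}\,\sqrt{\widehat{p}_{k_n}^{(n)}/p_{k_n}}$ and invoking the bound on $\sqrt{np_{k_n}}(\beta(k_n)-\beta)$ from part (i), the numerator tends to $0$ in probability while the denominator stays bounded away from $0$; hence the whole second summand vanishes in probability, and Slutsky's lemma delivers the assertion.

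The only genuinely delicate point is the passage from condition \eqref{eq:cond1}, phrased in terms of $1-L(e^{k_n})/L(e^{k_n+1})$, to the actual bias $\beta(k_n)-\beta=\ln\bigl(L(e^{k_n})/L(e^{k_n+1})\bigr)$; this is resolved by the elementary equivalence $\ln r_n\sim-(1-r_n)$, valid because $r_n\to 1$ by slow variation. Everything else is a routine application of Slutsky's lemma, given that the distributional convergences, the consistency $\widehat{\beta}_n(k_n)\to\beta$, and the ratio consistency $\widehat{p}_{k_n}^{(n)}/p_{k_n}\to 1$ are already available from the proofs of Theorems \ref{consistency} and \ref{asymptotic_normality}.
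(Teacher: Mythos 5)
Your proof is correct and follows essentially the same route the paper intends: the paper disposes of this corollary with the single remark that it ``follows immediately'' from Theorem \ref{asymptotic_normality}, and your decomposition into the centred term plus the deterministic bias $\beta(k_n)-\beta=\ln\bigl(L(e^{k_n})/L(e^{k_n+1})\bigr)$, the linearization $\ln r_n=-(1-r_n)(1+o(1))$ converting hypothesis \eqref{eq:cond1} into negligibility of the bias, and the Slutsky arguments (using Lemma \ref{convergence_probability_p_k}-type ratio consistency and Theorem \ref{consistency} for the studentized denominator) are precisely the details being left implicit. Nothing in your argument deviates from or adds beyond what the paper's one-line deduction presupposes.
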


Of course, the condition \eqref{eq:cond1} on $k_n$ can be difficult to check in
practice. This is a common issue in tail estimation and in the statistical analysis
of extreme values more generally. The choice of the hyperparameter $k$ somehow
governs the (asymptotic) bias-variance trade-off: the estimator \eqref{eq:est} is
expected to have a large variance when $k$ is large and to have a large bias
if $k$ is too small. As depicted in Fig. \ref{fig:estimator_a}, to choose $k$,
one may use the same approach as that originally proposed for the Hill estimator
(see \textit{e.g.} \cite{Resnick2007}), which consists of plotting the values
of \eqref{eq:est} for a range of values of $k$ and choosing $k$ in a region where
the estimator shows some stability. Section 3.2 of \cite{Carpentier2015} suggests
an adaptive algorithm for selecting \(k\) for \(\widehat{\beta}_n\), assuming the stricter condition
that \(F\) belongs to the class of second-order Pareto distributions.

We point out that, contrary to (variants of) the Hill inference method,
the integer hyperparameter \(k\) does not refer here to the number of order statistics involved in the estimator.
Instead, it determines a threshold that defines the tail probability we want to estimate. This implies
that the range of \(k\) is the set of integers between \(1\) and
\(\max_{1\leq i \leq n} \ln S_i-1\), while, for classical tail index estimators, the hyperparameter varies between \(1\) and \(n\).

\begin{figure}[ht!]
    \centering
    \begin{subfigure}[b]{0.49\textwidth}
        \centering
        \includegraphics[width=\textwidth]{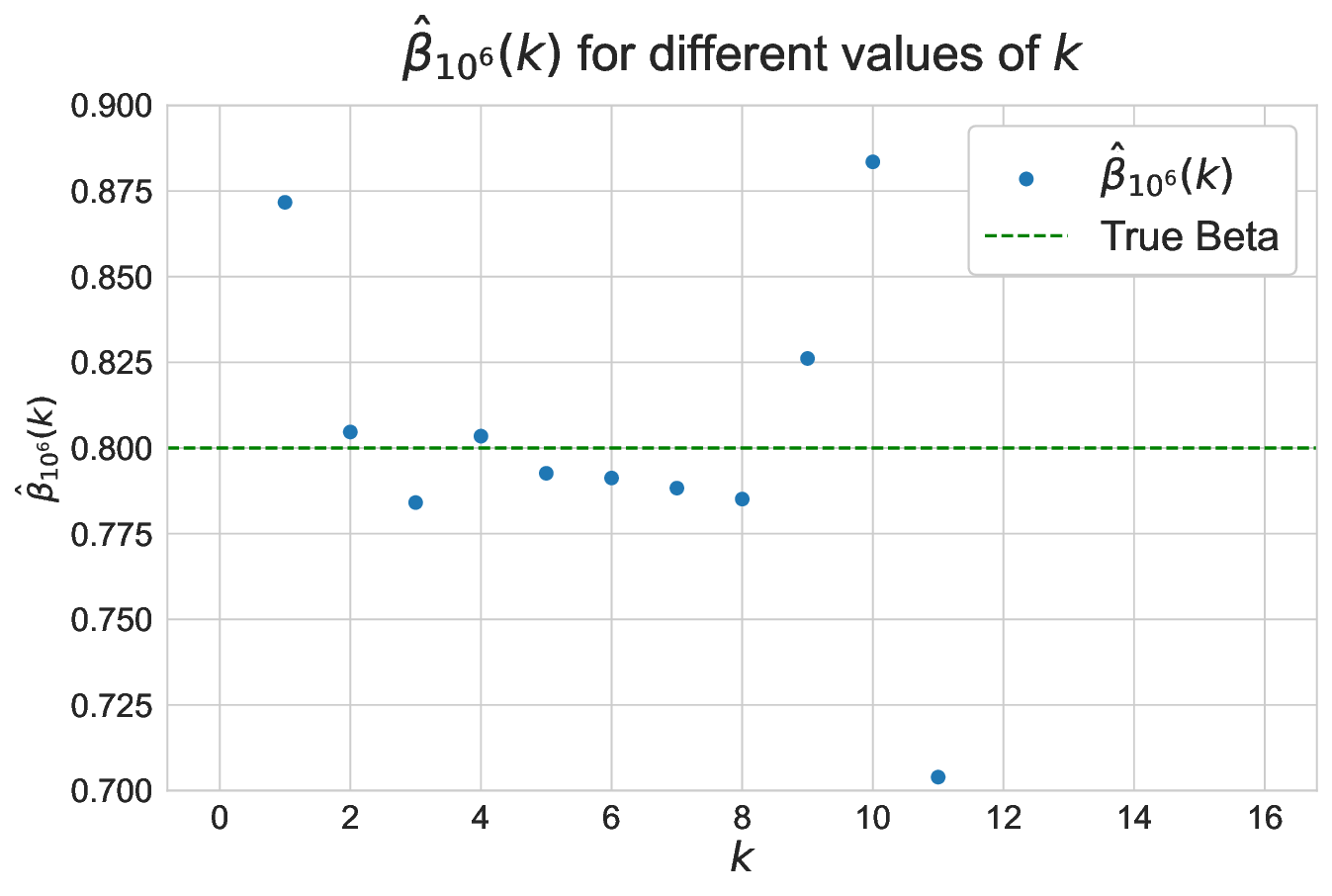}
        \caption{\(\widehat{\beta}_n(k)\) estimator}
        \label{fig:estimator_a}
    \end{subfigure}
    \hfill
    \begin{subfigure}[b]{0.49\textwidth}
        \centering
        \includegraphics[width=\textwidth]{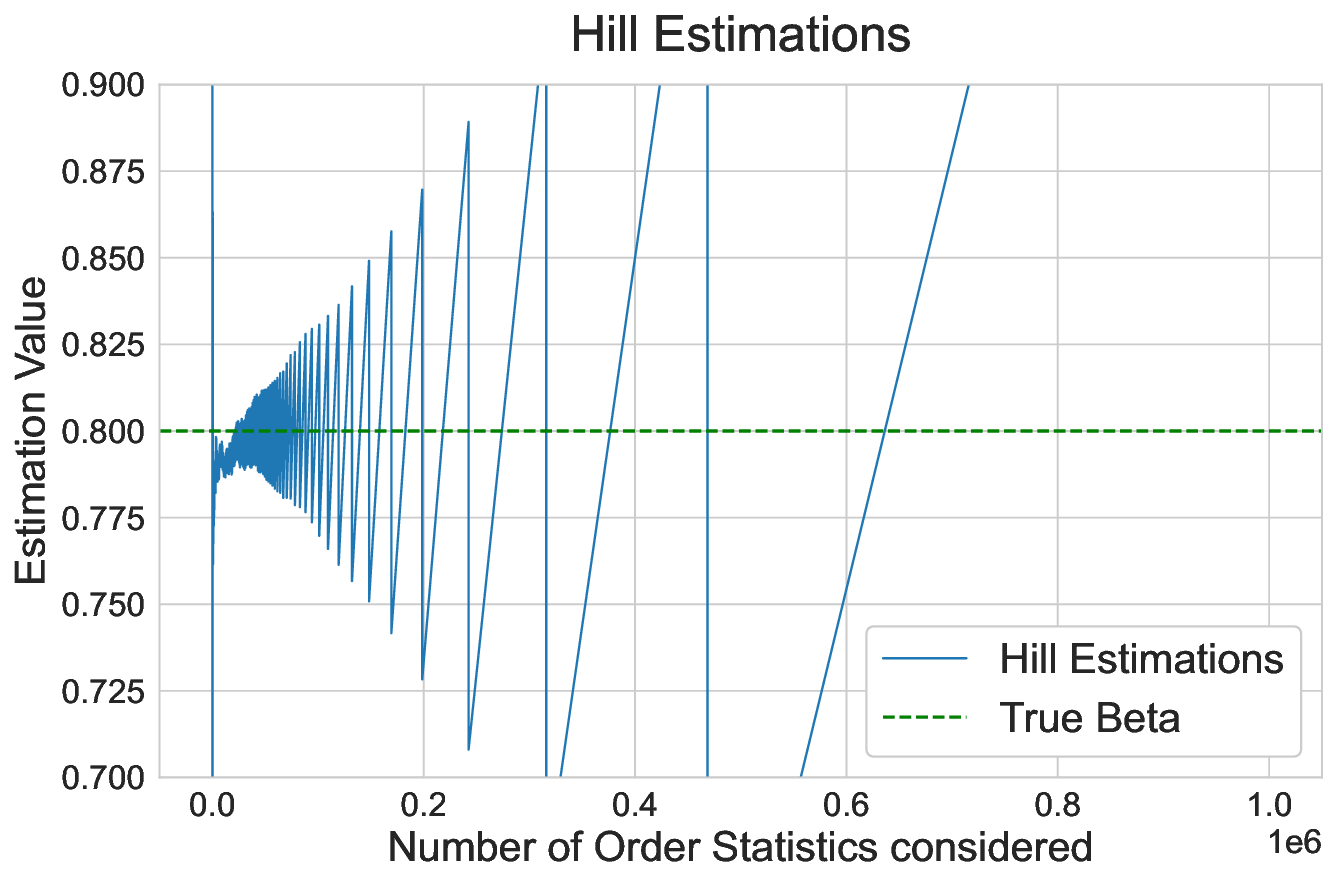}
        \caption{Hill estimator}
        \label{fig:estimator_b}
    \end{subfigure}
    \caption[Caption for LOF]{(a) Behaviour of \(\widehat{\beta}_n(k)\) for
        different values of \(k\), to estimate the parameter \(\beta=0.8\) based on a dataset of \(10^6\) independent
        realizations of a Zeta distribution\protect\footnotemark. (b) Behaviour of the Hill estimator for the same dataset.
        Notice that the range of the x axis in both graphs is different, in the case of \(\widehat{\beta}_n(k\) the range of \(k\) is the set of integers between \(1\) and \(\max_{1\leq i \leq n} \ln S_i-1\)
        while in the case of the Hill method, the x-axis is the number of order statistics
        involved in the estimator, hence, it goes from 1 to \(n\).}
    \label{fig:estimator}
\end{figure}

\footnotetext{A discrete r.v. \(W\) follows a Zeta distribution with parameter \(\beta\) if
    $\P\left( W=k \right)=1/(k^{\beta-1}\zeta( \beta-1))$ where
    \(\zeta\) is the Riemann zeta function. Its cdf satisfies $\P(
        W\geq k)\sim k^{\beta}/(\beta\zeta(\beta-1))$. This distribution is also
    known as Zipf's distribution due to its relationship with Zipf's law. }

\begin{remark}{\sc (Quantiles vs tail probabilities)} In Extreme Value Theory,
    it is customary to estimate the tail index by looking at the largest values, \textit{i.e.} a fraction of the order statistics. The general
    idea consists of fixing the tail probabilities and then using the order statistics (\textit{i.e.} empirical quantiles)
    to estimate the quantiles \cite[Chapter 3]{de2007extreme}. The estimator proposed
    here follows the converse approach: we fix the thresholds/quantiles, next use the data to
    estimate the tail probabilities and finally use these estimations in order to mimic
    the limit behavior of \eqref{beta_k_eq}. In general, estimators based on order
    statistics are difficult to analyse in a non-asymptotic way,
    whereas our estimate is based on probabilities of well-chosen tail events.
\end{remark}

\begin{remark} {\sc (Averaged versions)}
    From a practical point of view, rather than picking a single value for $k$, another natural approach would consist in averaging the estimators \eqref{eq:est} over a range of values for the hyperparameter. Let \(k\) and \(m\) be such that \(k>m\) and define
    \begin{equation*}
        \beta \left( {k,m} \right)       = \frac{1}{{2m + 1}}\sum\limits_{j =  - m}^m {\beta \left( {k + j} \right)},\;\;
        \widehat \beta_n \left( {k,m} \right)  = \frac{1}{{2m + 1}}\sum\limits_{j =  - m}^m {\widehat \beta_n \left( {k + j} \right)}.
    \end{equation*}

    One may easily check that
    \begin{equation}\label{bias_average_eq}
        \beta \left( {k,m} \right) = \beta  + \frac{1}{{2m + 1}}\left| {\ln \left( {\frac{{L\left( {{e^{k - m}}} \right)}}{{L\left( {{e^{k + m + 1}}} \right)}}} \right)} \right|.
    \end{equation}

    The non-asymptotic bound in Proposition \ref{bias_variance_thm} can be extended to
    the averaged version, as revealed by the analysis carried out in \ref{subsec:ave} in
    the Appendix section, as well as the strong consistency and asymptotic normality
    results. However, the asymptotic variance of the averaged version is shown to
    increase with $m$.
\end{remark}
\begin{remark}{\sc (Second-order conditions)}
    According to \cite[Remark 3.2.6]{de2007extreme}, under the following second order condition:
    \begin{equation}\label{eq:hill_estimator_unbiased}
        \forall t>0, \alpha>0,\quad \lim_{x\to +\infty}\left( {\frac{\overline{F}(tx)}{\overline{F}(x)}}-t^\beta \right)x^{\alpha}=0
    \end{equation}
    the Hill estimator, properly standardized, converges in distribution to a centered normal
    random variable. We point out that if the distribution of \(S\) satisfies
    \eqref{eq:hill_estimator_unbiased}, the sequence \(k_n\) satisfies the hypothesis of
    Theorem \ref{asymptotic_normality}, and there is a positive constant \(C\) such that \(n\leq e^{C k_n}\)
    for \(n\) large enough, then the conditions of Corollary \ref{asymptotic_normality_real_value} are satisfied.
\end{remark}
As explained in Example \ref{ex:matsui}, it is possible to build a discrete heavy tail distribution that do not satisfy any second order condition (as highlighted in \cite{Matsui2013}) but for which asymptotic normality of \eqref{eq:est} remarkably holds true.

\begin{example}\label{ex:matsui}

    Let \(l\in\N\), \(\beta>0\) and \(U\) be a uniform distribution in \([0,1]\).
    Consider the random variable \(X=10^{-l} \lfloor 10^l U^{-1/\beta} \rfloor \). This
    transformation of $U^{-1/\beta}$ turns all but the first $l$ digits behind the comma
    into zeros. Equation (1.6) to \cite{Matsui2013} shows that the survival function of
    this random variable is given by
    \begin{equation}\label{eq:survival_fn_matsui}
        \bar{F}(x)=\frac{(10^l)^\beta}{\left( \left\lfloor 10^l x \right\rfloor + 1 \right)^\beta},
    \end{equation}
    and that \(\bar{F}(x)\sim x^{-\beta}\) as \(x\to\infty\). Moreover, the authors showed
    that this distribution does not satisfy the second order condition \eqref{eq:hill_estimator_unbiased}.
    Define \(L(x)={(10^lx)}^{\beta}({\lfloor 10^l x \rfloor+1})^{-\beta}\), then
    \(L(x)\to 1\) as \(x\to\infty\), hence it is slowly varying, and equation
    \eqref{eq:survival_fn_matsui} can be written as \(\bar{F}(x)=x^{-\beta}L(x)\). In
    section \ref{sec:details_example_matsui} of the Supplementary Material we show that
    there exists a positive constant \(K\) such that, for \(x\) big enough
    \begin{equation}\label{eq:svf_bound_matsui}
        \left|\frac{L(x)}{L(ex)}-1\right|\leq K x^{-1},
    \end{equation}
    therefore, if \(k_n\to +\infty\), for \(n\) large enough, we have
    \begin{equation}\label{eq:bound_asympt_norm_condition}
        \sqrt {np_{k_n}} \left| {1 - \frac{{L\left( {{e^{{k_n}}}} \right)}}{{L\left( {{e^{{k_n} + 1}}} \right)}}} \right|  \leq K\sqrt{\frac{nL(e^{k_n})}{e^{k_n\beta}}}\left(e^{-k_n}\right)\leq 2K\sqrt{n e^{-k_n(\beta+2)}}.
    \end{equation}

    This shows that if \(k_n\) satisfies the conditions of Theorem \ref{consistency} and
    \(n e^{-k_n(\beta+2)}\) goes to \(0\), then the conditions of Corollary
    \ref{asymptotic_normality_real_value} are satisfied and \(\sqrt {np_{k_n}} (
    {\widehat \beta_n ( {{k_n}} ) - \beta } )\) converges in distribution to a centered
    normal random variable. Moreover, if we take \(k_n=A\ln n\) (as in Corollary
    \ref{ln_strong_consistency}) then, the asymptotic normality is guaranteed as long as
    \(1/(\beta+2)<A<1/\beta\).
\end{example}


In the next subsection, we discuss further how the behaviour of the slowly varying
function $L$ impacts the 'bias-variance' contributions revealed by the bound
\eqref{rateConvergence}.

\subsection{Refined 'Bias \textit{vs} Variance' Analysis - Examples}\label{subsec:examples}

We now consider several specific cases of distributions of type \eqref{eq:Zipf}
(\textit{i.e.} several instances of the slowly varying functions $L$) to explicit the
asymptotic order of magnitude of the terms $1/\sqrt{np_{k+1}}$ and $|
    \ln(L(e^{k})/L(e^{k+1}))|$ involved in the bound \eqref{rateConvergence}, when \(k_n\) is
picked as in Corollary \ref{ln_strong_consistency}: \(k_n=A\ln n\) with \(0<A<1/\beta\).

\begin{itemize}
    \item \textbf{The logarithmic case:} Suppose that \(L(n)=C\ln{n}\), where $C>0$. In this
          situation, we have $| \ln(L(e^{k_n})/L(e^{k_n+1}))|\sim 1/(A\ln n)$ as $n\to+\infty$,
          whereas $1/\sqrt{np_{k+1}}=O(1/\sqrt{n^{1-A\beta}\ln n})$.
    \item \textbf{The inversely logarithmic case:} Consider now the situation where
          $L(n)=C/\ln{n}$ with $C>0$. Then, we still have we have
          $|\ln(L(e^{k_n})/L(e^{k_n+1}))|\sim 1/(A\ln n)$, while $1/\sqrt{np_{k+1}}=O(\sqrt{(\ln
              n)/n^{1-A\beta}})$ as $n\to+\infty$.
\end{itemize}

We point out that, in the two examples above, the conditions of Corollary
\ref{asymptotic_normality_real_value} are not met, the bias being too big to get
asymptotic normality (centered at \(\beta\)).

\begin{itemize}
    \item \textbf{The asymptotically constant case:} Suppose that \(L(n)=e^{C_0}(1+\epsilon{(n)})\) where \(C_0>0\) and \(\epsilon(n)\to 0\) as $n\to +\infty$. In this case,
          $| \ln(L(e^{k_n})/L(e^{k_n+1}))|= O(\epsilon(n^A))$ and \(1/\sqrt{np_{k+1}}=O(1/ \sqrt{n^{1-A\beta}})\).
          Hence, if \(| \epsilon( n^A )|= O( n^{-\lambda})\)
          for some \(\lambda>0\), then the conditions of Corollary \ref{asymptotic_normality_real_value} are satisfied if we take \(k_n=A\ln n\) such that
          \(\max\{ (1-2\lambda)/\beta,\; 0 \}< A < 1/\beta\).
    \item \textbf{Slow variation with a remainder (\(SR2\)):} Consider the case where the slowly varying
          function satisfies the condition \(SR2\) introduced in \cite{Bingham1987}: there exist two real-valued functions \(k\) and \(g\) defined on $\mathbb{R}_+$ such that, for all $\lambda>0$,
          \begin{equation}\label{eq:SR2}
              \frac{L(\lambda x)}{L(x)}-1 \sim \kappa(\lambda)g(x), \text{ as } x\to+\infty,
          \end{equation}
          where \(\kappa (\lambda)=c\int_1^{\lambda}{{\theta}^{\rho-1}d\theta}\), $c>0$ and \(g\) is regularly varying with index \(\rho\leq 0\), \textit{i.e.} \(g(x)=x^\rho U(x)\) where \(U\) is a slowly varying function.
          Under the additional assumption
          that \(g\) has positive decrease, Corollary 3.12.3 in \cite{Bingham1987} gives the following representation:
          \begin{equation}\label{sr2_repre}
              L(x)=C\left( 1-c|\rho|^{-1}g(x)+o\left(g \left(x\right) \right) \right), \text{ as } x\to +\infty,
          \end{equation}
          where \(C\) is a finite constant. The result below provides precise
          control of the bias of the estimation method in this case.

          \begin{lemma}\label{bias_sr2} Suppose that conditions \eqref{eq:SR2} and \eqref{sr2_repre} are fulfilled. Then, as $n\to +\infty$, we have:
              \begin{equation*}
                  \ln\left(\frac{L\left(n^A\right)}{L\left(en^A\right)}\right)=-c|\rho|^{-1}n^{-A|\rho|}\left( U\left(n^A\right)-e^{-|\rho|}U\left( en^A \right) \right) + o\left(n^{-A|\rho|}U\left(n^A\right)\right).
              \end{equation*}
          \end{lemma}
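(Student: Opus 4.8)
The plan is to reduce the statement to a single asymptotic expansion. First I would set $x = n^A$ and prove the equivalent claim that, as $x \to +\infty$,
$$\ln\left(\frac{L(x)}{L(ex)}\right) = -c|\rho|^{-1} x^{\rho}\bigl(U(x) - e^{-|\rho|}U(ex)\bigr) + o\bigl(x^\rho U(x)\bigr),$$
after which substituting back $x = n^A$ and using $(n^A)^\rho = n^{-A|\rho|}$ (recall $\rho = -|\rho|$, since $\rho<0$ is forced by the factor $|\rho|^{-1}$ appearing in \eqref{sr2_repre}) delivers the stated formula.

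The core of the argument is to feed the representation \eqref{sr2_repre} into both numerator and denominator. Writing $g(x) = x^\rho U(x)$, we have $L(x) = C(1 - c|\rho|^{-1}g(x) + o(g(x)))$ and likewise $L(ex) = C(1 - c|\rho|^{-1}g(ex) + o(g(ex)))$; because $\rho < 0$ and $U$ is slowly varying, both $g(x)$ and $g(ex)$ vanish at infinity. Taking the logarithm of the quotient and expanding $\ln(1+u) = u + O(u^2)$ about $u=0$, the constant $C$ cancels and I would obtain
$$\ln\left(\frac{L(x)}{L(ex)}\right) = -c|\rho|^{-1}\bigl(g(x) - g(ex)\bigr) + o(g(x)) + o(g(ex)),$$
where the quadratic remainders $O(g(x)^2)$ and $O(g(ex)^2)$ are swallowed by the $o(\cdot)$ terms since $g(x), g(ex)\to 0$.

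It then remains to isolate the leading term and control the remainder. By regular variation of $g$ with index $\rho$, one has $g(ex) = (ex)^\rho U(ex) = e^{-|\rho|} x^\rho U(ex)$, whence
$$g(x) - g(ex) = x^\rho\bigl(U(x) - e^{-|\rho|}U(ex)\bigr),$$
which is precisely the leading term once multiplied by $-c|\rho|^{-1}$. Finally, since $U$ is slowly varying, $g(ex)/g(x) = e^\rho\,U(ex)/U(x) \to e^{-|\rho|}$, so that $g(ex)\sim e^{-|\rho|}g(x)$ and therefore $o(g(ex)) = o(g(x)) = o(x^\rho U(x))$; this collapses every error term into the single remainder $o(x^\rho U(x))$, equivalently $o(n^{-A|\rho|}U(n^A))$ after substitution.

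The computation is essentially routine; the only point requiring genuine care is the bookkeeping of the little-$o$ terms, namely verifying that the two separate remainders arising from numerator and denominator, together with the quadratic Taylor remainder, all reduce to the \emph{single} error $o(x^\rho U(x))$. This is exactly where I invoke that $\rho < 0$ (so that $g$ vanishes at infinity) and that $U$ is slowly varying (so that $g(ex)$ and $g(x)$ are comparable up to the multiplicative constant $e^{-|\rho|}$).
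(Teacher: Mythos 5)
Your proof is correct and takes essentially the same route as the paper's: the paper likewise substitutes the representation \eqref{sr2_repre} into numerator and denominator, cancels $C$, applies a first-order expansion of $\ln(1+\cdot)$, and uses regular variation of $g$ to absorb $o(g(ex))$ into $o(g(x))$ before writing $g(x)=x^{\rho}U(x)$. The only cosmetic difference is that the paper packages the expansion step as a separate lemma for a general scaling factor $\lambda$ and then specializes to $\lambda=e$, $x=n^{A}$, whereas you carry out the whole computation at once.
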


          In this situation, the bias of the method is thus of order \(O(n^{-A|\rho|})\),
          while $1/\sqrt{np_{k+1}}$ is of order \(O(n^{-(1-A\beta)/2})\). Hence, if
          \(1/(\beta+2|\rho|)\leq A<1/\beta\), the conditions of Corollary
          \ref{asymptotic_normality_real_value} are satisfied with \(k_n=A\ln n\).
\end{itemize}

To illustrate this trade-off, we present the following Monte-Carlo experiment: We
generate $10^4$ samples of a heavy-tailed distribution and calculate
$\widehat{\beta}_{10^4}({k})$ for all admissible values of $k$, we repeat this experiment
100 times, and then we calculate the mean and the 95\% confidence interval of
$\widehat{\beta}_{10^4}({k})$ for each value of $k$. The results of these simulations,
for the cases where $L(n)$ is asymptotically constant and $L(n)$ is logarithmic, are
presented in Figures \ref{fig:constant_heavy_tail} and \ref{fig:log_heavy_tail}. As
expected, the behaviour of the estimator is way better in the former case than in the
latter.

\begin{figure}[ht!]
    \centering
    \begin{subfigure}[b]{0.49\textwidth}
        \centering
        \includegraphics[width=\textwidth]{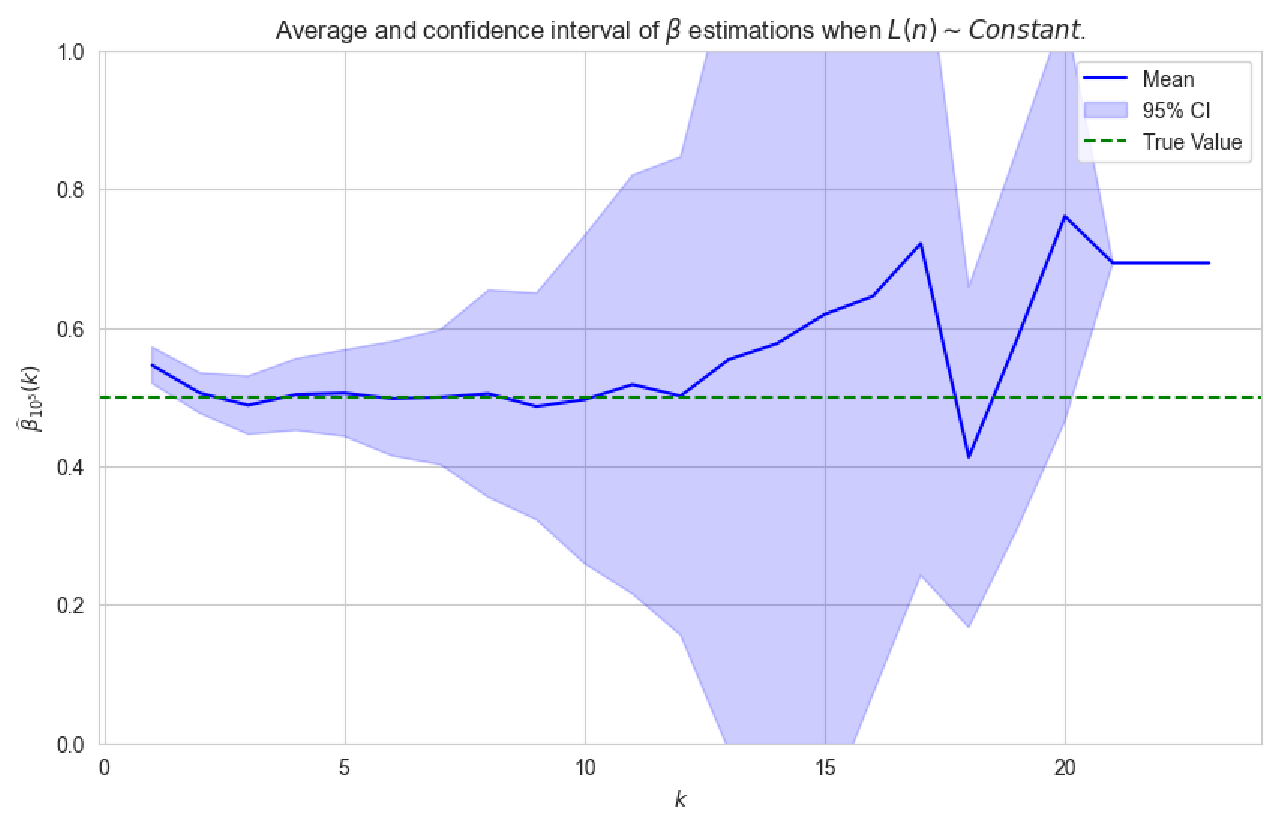}
        \caption{Case when $L(n)\sim C$}
        \label{fig:constant_heavy_tail}
    \end{subfigure}
    \hfill 
    \begin{subfigure}[b]{0.49\textwidth}
        \centering
        \includegraphics[width=\textwidth]{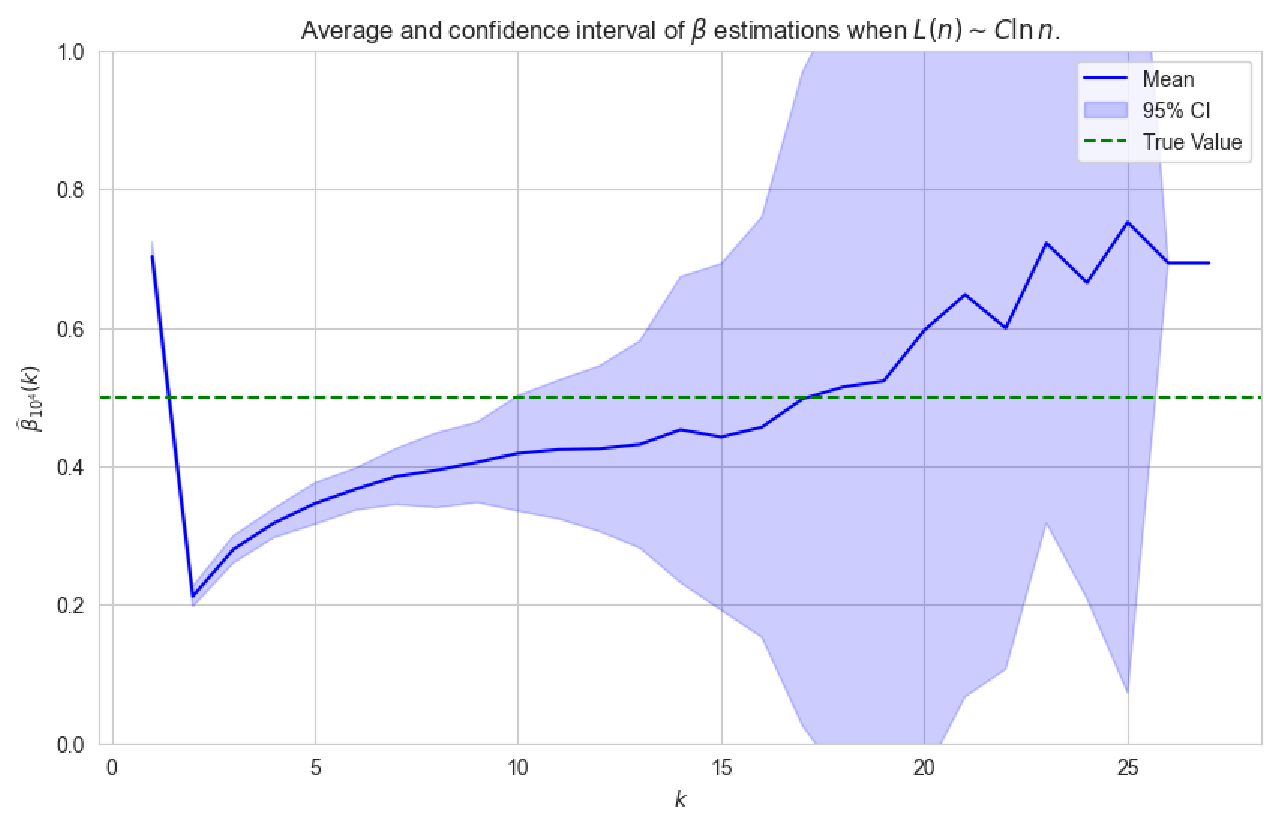}
        \caption{Case when $L(n)\sim C\ln (n)$}
        \label{fig:log_heavy_tail}
    \end{subfigure}
    \caption{Monte-Carlo average and 95\% confidence interval for \(\widehat{\beta}_{10^4}(k)\), as \(k\) is varying. The true value of $\beta$ in both cases is $0.5$.}
    \label{fig:heavy_tail_comparison}
\end{figure}

\section{Regular Null-Recurrent Chains - Regularity Index Estimation}\label{sec:background}

We start by setting out the notations used throughout this section, now standard in the
Markov chain literature, and listing first the properties supposedly satisfied by the
class of Markov chains under study. One may refer to \cite{Meyn2009} for an excellent
account of the Markov chain theory. The concept of \(\beta\)-regularity for describing
how fast a Harris chain returns to Harris sets is then recalled, together with related
asymptotic properties, invoked in the subsequent statistical analysis, for clarity's
sake. Then, the main results of this paper, related to the inference of the parameter
$\beta$ and the extended use of the estimator \eqref{eq:est} in the (regenerative)
Markovian case, are established and discussed. Here, $X=(X_n)_{n\in \mathbb{N}}$ denotes
a time-homogeneous Markov chain, with state space $E$, equipped with a countably
generated $\sigma$-field $\mathcal{E}$, and transition probability $\Pi(x,dy)$. For any
probability distribution $\nu$ on $E$, we denote by $\mathbb{P}_{\nu}$ the probability
distribution on the underlying space such that $X_{0}\sim \nu(dx)$ and by
$\mathbb{E}_{\nu}[.]$ the corresponding expectation. For notational convenience, we shall
write $\mathbb{P}_{x}$ and $\mathbb{E}_{x}[.]$ when $\nu$ is the Dirac mass at $x\in E$.
In the following, we also denote by $t\in \mathbb{R}\mapsto \lfloor t \rfloor$ the floor
function and by $\Gamma(z)=\int_{t\geq 0}t^{z-1}e^{-t}dt$ the Gamma function.

\subsection{Background and Preliminaries}\label{subsec:back}

Throughout the section, we suppose that the chain $X$ is $\psi$-irreducible, meaning that
there exists some $\sigma$-finite measure $\psi$ on $(E,\mathcal{E})$ such that any
measurable set $B\subset E$, weighted by $\psi$, can be reached by the chain with
positive probability in a finite number of steps, \textit{i.e.} $\sum_{n\geq
        1}\Pi_{n}(x,B)>0$, no matter the starting point $x\in E$, denoting by $\Pi_n(x,dy)$ the
$n$-th iterate of the transition probability $\Pi(x,dy)$. Recall that an irreducibility
measure is said to be \textit{maximal} if it dominates any other irreducibility measure.
We also assume that $X$ is aperiodic (rather than replacing $\Pi$ by an iterate) and
Harris recurrent, \textit{i.e.} that, with probability one, it visits an infinite number
of times any measurable subset $B\subset E$, weighted by maximal irreducibility measures,
whatever the initial state: $\forall x\in E$,
$\mathbb{P}_x\left(\sum_{n=1}^{\infty}\mathbb{I}\{ X_n\in B \}=\infty\right)=1$. When
Harris recurrent, a transition kernel $\Pi(x,dy)$ has a non zero invariant (positive)
measure $\mu(dx)$ (\textit{i.e.} such that $\int_{x\in E}\mu(dx)\Pi(x,dy)=\mu(dy)$), that
is unique up to a multiplicative factor (notice incidentally that $\mu(dx)$ is a maximal
irreducibility measure). Measurable sets weighted by $\mu$ are said to be Harris. For
Harris recurrent chains, recall that the following strong ratio limit theorem holds. We
have indeed, as $n\rightarrow \infty$,
\begin{equation}\label{eq:SLLN}
    \frac{\sum_{i=1}^n\mathbb{I}\{ X_i\in B \}}{\sum_{i=1}^n\mathbb{I}\{ X_i\in C \}}
    \rightarrow \frac{\mu(B)}{\mu(C)}\quad \mathbb{P}_{\nu}\text{-almost-surely},
\end{equation}
for any initial distribution $\nu$ and any measurable sets $B$ and $C$ s.t. $\mu(C)>0$.
When the measure $\mu(dx)$ is finite, the chain is said to be \textit{positive recurrent}
and, by convention, rather than considering $\mu(dx)/\mu(E)$, by $\mu(dx)$ we mean the
stationary probability measure in this case.\medskip

\noindent \textbf{Regular chains.} For a wide class of Harris Markov chains, the
\textit{regularity index} describes how fast the \textit{occupation time} related to a
Harris set $B$ (\textit{i.e.} the number of visits to $B$)
$$\Sigma_n(B)=\sum_{i=1}^n\mathbb{I}\{ X_i\in B \}$$ increases with time $n$. When $X$ is
positive recurrent, it follows from the Strong Law of Large Numbers that occupation times
of Harris sets grow in a linear fashion with the observation time: as $n\rightarrow
    \infty$, $\Sigma_n(B)\sim \mu(B)n \;\; \mathbb{P}_{\nu}\text{-almost surely}$. In the
general Harris case, some technical assumptions are required in order to be able to
specify the growing rate. In order to formulate them rigorously, further concepts are
required. Recall that a \textit{special set} (also referred to as a \textit{$D$-set}
sometimes \citep{Chen1999}) for the chain $X$ is any Harris set $D$ such that
$\mu(D)<\infty$ and $   \sup_{x\in E}\mathbb{E}_x[ \sum_{i=1}^{\tau_B}\mathbb{I}\{ X_i\in
    D \} ]<\infty$, for any Harris set $B\subset E$, denoting by $\tau_B=\inf\{i\geq 1:\;
    X_i\in B \}$ the hitting time to $B$. We recall that special sets not only exist but
there are many of them: actually, any Harris set contains a special set at least, see
Proposition 5.13 in \cite{Nummelin1984}. For any special set $D$ and initial distribution
$\nu$, consider the so-termed \textit{truncated Green function}:
\begin{equation*}
    G_{\nu,D}(t)=\frac{1}{\mu(D)}\sum_{n=1}^{\lfloor t\rfloor}\nu \Pi_n(D)
\end{equation*}
where $\nu \Pi_n(B)=\int_{x\in E}\nu(dx)\Pi_n(x,B)=\mathbb{P}_{\nu}(X_n\in B)$ for any
$B\in \mathcal{E}$. Harris recurrence entails that $G_{\nu, D}(t)\rightarrow +\infty$ as $t\rightarrow
    \infty$.

In the following, we restrict our attention to a specific class of Harris chains for
which the rate at which $G_{\nu, D}(t)$ grows to infinity as $t\rightarrow \infty$ can be
characterized. Notice that, in such cases, the rate would be independent from the pair
$(\nu, D)$. Indeed, by virtue of Theorem 7.3 in \cite{Nummelin1984}, we have $G_{\nu_1,
            D_1}(t)/G_{\nu_2, D_2}(t)\rightarrow 1$ as $t$ goes to infinity, for any distributions
$\nu_1$ and $\nu_2$ and any special sets $D_1$ and $D_2$. One may thus give the following
definition, see \cite{Chen1999,Chen2000}.

\begin{definition}[{\sc $\beta$-regular Markov chain}]\label{def:regular_chain} Let $\beta\in [0,1]$.
    A Harris chain $X$ is said to be $\beta$-regular if there exists a special
    set $D$ and a distribution $\nu$ such that the function $G_{\nu, D}$ is
    $\beta$-regularly varying: $\forall t>0$,
    \begin{equation}\label{eq:regular_chain}
        \lim_{\lambda\rightarrow \infty}\frac{G_{\nu, D}(\lambda t)}{G_{\nu, D}(\lambda)}=t^{\beta}.
    \end{equation}
\end{definition}

We point out that property \eqref{eq:regular_chain} can be rephrased as follows: there
exists a slowly varying function $L_{\nu, D}(t)$ such that $G_{\nu, D}(t)=L_{\nu,
    D}(t)t^{\beta}$. Notice incidentally that ``$\beta$-regularity'' is called ``$\beta$-null
recurrence'' in \cite{Tjostheim-2001} when $\beta<1$, while $\beta=1$ corresponds to the
positive recurrent case. The parameter $\beta$ thus rules the ``frequency'' at which a
(supposedly regular) Harris chain $X$ recurs \citep{Chen1999} and it is the purpose of
this section to investigate the issue of estimating it with asymptotic guarantees, based
on the observation of a single path $X_1,\; \ldots,\; X_n$ of size $n\to+\infty$. In
particular, we shall focus in subsection \ref{subsec:main} on the case of
\textit{regenerative} chains, for which an extension of the estimator \eqref{eq:est} can
be used with statistical guarantees. \medskip

\noindent \textbf{Regenerative regular chains.} Recall that a Markov chain is
\textit{regenerative} when it possesses an accessible atom, \textit{i.e.} a measurable
set $A$ such that $\psi(A)>0$ and $\Pi(x,.)=\Pi(y,.)$ for all $(x,y)\in A^{2}$. By
$\tau_{A}=\tau_{A}(1)=\inf\left\{ n\geq1,\;X_{n}\in A\right\}$ is meant the hitting time
to $A$ and we denote by $\tau_{A} (j)=\inf\left\{ n>\tau_{A}(j-1),\;X_{n}\in A\right\} ,
$ for $j\geq2$, the successive return times to $A$, by $\mathbb{P}_{A}$ the probability
measure on the underlying space such that $X_{0}\in A$ and by $\mathbb{E}_{A}[.]$ the
$\mathbb{P}_{A}$-expectation. In the regenerative case, it results from the
\textit{strong Markov property} that the blocks of observations in between consecutive
visits to the atom
\begin{equation}\label{eq:traj}
    \mathcal{B}_{1}=(X_{\tau_{A}(1)+1},...,\;X_{\tau_{A}(2)}),\; \ldots,\;
    \mathcal{B}_{j}=(X_{\tau_{A}(j)+1},...,\;X_{\tau_{A}(j+1)}),\;\ldots
\end{equation}
form a collection of i.i.d. random variables, taking their values in the torus
$\mathbb{T}=\cup_{n=1}^{\infty}E^{n}$, and the sequence $\{\tau_{A}%
    (j)\}_{j\geq1}$, corresponding to successive times at which the chain forgets
its past is a (possibly delayed) renewal process.

The class of regenerative chains includes a wide variety of Markov processes, including
all chains with countable state spaces (where any recurrent state serves as an accessible
atom). This class also incorporates many Markov models frequently employed in operations
research and queueing theory (see, for example, \cite{Asmussen2010}). Further examples of
(regular) regenerative chains are presented in Examples \ref{ex1} and \ref{ex3} below.

\begin{example}\label{ex1}{\sc (Bessel random walks)} A \textit{Bessel random walk} with drift $\delta\in [-1,\; +\infty)$ is a Markov
    chain with $\mathbb{N}$ as state space, jumps in $\{-1,\; +1\}$, reflecting at $0$
    and with transition probabilities of the form:
    \begin{equation*}
        \Pi(0,1) = 1 \quad \text{and} \quad 1-\Pi(k, k-1) = \Pi(k, k+1) = \frac{1+h(k)-\delta/(2k)}{2}\quad \forall k \geq 1,
    \end{equation*}
    where
    $h(k)\in(-1+\delta/(2k),\; 1+\delta/(2k))$ and \(h( k) = o(1/k)\) as \(k\to+\infty\).
    It is recurrent when \(\delta>-1\), positive recurrent when \(\delta>1\) and
    transient when \(\delta=-1\). For \(\delta=1\), it is either recurrent or else
    transient, depending on the function \(h(x)\). In the null recurrent case, the chain
    is \(\beta\)-regular with \(\beta = (1+\delta)/2\) , see Theorem 2.1 in
    \cite{Alexander2011}. Of course, when \(\delta=0\) and \(h\equiv 0\), this chain
    corresponds to a simple reflected random walk with \(p=1/2\).
\end{example}


\begin{example}\label{ex3}{\sc (Null recurrent, not necessarily regular, chains)}
    By means of the model below, originally presented in \cite{Myklebust-2012}, one can
    generate \(\beta\)-null recurrent chains for any \(\beta>0\), as well as null
    recurrent chains that are not regular. Let \(\{\eta_n\}_{n\in \mathbb{N}}\) be a
    sequence of i.i.d. real-valued random variables. Consider the chain defined by:
    \begin{equation*}
        X_n=(X_{n-1}-1)\mathbb{I}\{X_{n-1}>1\}+\eta_n \mathbb{I}\{
        X_{n-1}\in [0,1]\}\quad n\geq 1.
    \end{equation*}
    This chain is regenerative, with the interval \(\left[ {0,1} \right]\) as atom.
    In addition, we have \(\mathbb{P}_x( {{\tau _{\left[ {0,1} \right]}} > n}) =
    \mathbb{P}( {\lfloor {\eta _1}} \rfloor > n )\). Hence, \(X\) is null recurrent iff
    \(\mathbb{E}{\left\lfloor {{\eta _1}} \right\rfloor }=\infty\) and \(\beta\)-regular
    with $\beta\in (0,1]$ iff the r.v. $\lfloor \eta _1 \rfloor$ has generalized discrete
    Pareto distribution with tail index $\beta$.
\end{example}

In the regenerative setting, all stochastic stability properties may be expressed in
terms of speed of return to the atom. For instance, when $X$ is \textit{Harris
    recurrent}, see Theorem 10.0.1 in \cite{Meyn2009}, the invariant measure is equal to the
occupation measure between two consecutive visits to the atom (up to a multiplicative
factor): $\forall B\in \mathcal{E}$, $\mu(B)\sim \E_{A}\left[ \sum
    _{i=1}^{\tau_{A}}\mathbb{I}\{X_{i}\in B\}\right]$. For instance, the chain is positive
recurrent if and only if the expected return time to the atom is finite\footnote{Its
(unique) invariant probability distribution $\mu$ is then given by
$\mu(B)=(1/\mathbb{E}_{A}[\tau_{A}])\mathbb{E}_{A}[
    \sum_{i=1}^{\tau_{A}}\mathbb{I}\{X_{i}\in B\}],\text{ for all } B\in\mathcal{E}$.},
\textit{i.e.} $\mathbb{E}_{A}[\tau_{A}]<\infty$, see Theorem 10.2.2 in \cite{Meyn2009}.
More generally, the $\beta$-regularity property can be characterized by the heaviness of
the tail of the probability distribution of the regeneration times in the atomic case, as
the following result shows.%

\begin{proposition}\label{prop:atomic} (\cite{Tjostheim-2001}, Theorem 3.1) Suppose that
    $X$ is regenerative Harris recurrent. Let $A$ be an atom for $X$ and
    $\beta\in [0,1]$. The following assertions are equivalent.
    \begin{itemize}
        \item[(i)] The chain $X$ is $\beta$-regular.
        \item[(ii)] There exists a slowly varying function $L_A:\mathbb{R}_+\rightarrow \mathbb{R}_+$ such that: $\forall m\geq 1$,
              \begin{equation}
                  \mathbb{P}_A\left( \tau_A \geq m \right)=L_A(m)\cdot m^{-\beta}.
              \end{equation}
    \end{itemize}
\end{proposition}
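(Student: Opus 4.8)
The plan is to exploit the regenerative (atomic) structure to reduce the truncated Green function to the renewal function generated by the return times, and then to pass between the regular variation of this renewal function and the regular variation of the return-time tail by means of Karamata's Tauberian machinery. First, since an accessible atom is in particular a special set (see \cite{Nummelin1984}), Definition \ref{def:regular_chain} may be tested on the pair $D=A$ with the chain started from the atom; indeed, by Theorem 7.3 in \cite{Nummelin1984} the regularity index does not depend on the chosen pair $(\nu,D)$, since $G_{\nu_1,D_1}/G_{\nu_2,D_2}\to 1$. With these choices the truncated Green function is, up to the constant $1/\mu(A)$,
\begin{equation*}
    G_{A,A}(t)=\frac{1}{\mu(A)}\sum_{n=1}^{\lfloor t\rfloor}\mathbb{P}_A(X_n\in A)=\frac{1}{\mu(A)}\sum_{n=1}^{\lfloor t\rfloor}u_n,
\end{equation*}
where $u_n=\mathbb{P}_A(X_n\in A)$ is exactly the renewal sequence associated with the i.i.d. block decomposition \eqref{eq:traj}, because every visit to the atom is a regeneration. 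Hence assertion (i) is equivalent to the partial sums $\mathcal{U}(N):=\sum_{n=1}^{N}u_n$ being regularly varying of index $\beta$.

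Next, I would introduce the generating functions $F(s)=\sum_{n\geq 1}f_n s^n$ with $f_n=\mathbb{P}_A(\tau_A=n)$, and $U(s)=\sum_{n\geq 0}u_n s^n$. The strong Markov property at successive returns to $A$ yields the renewal equation $u_n=\sum_{k=1}^n f_k u_{n-k}$ for $n\geq 1$ (with $u_0=1$), hence the identity $U(s)=1/(1-F(s))$, while a standard generating-function manipulation gives the tail identity
\begin{equation*}
    \frac{1-F(s)}{1-s}=\sum_{n\geq 0}\mathbb{P}_A(\tau_A>n)\,s^n.
\end{equation*}
These two identities form the bridge between the return-time tail in (ii) and the renewal function in (i).

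The core of the argument is then a chain of equivalences obtained by applying Karamata's Tauberian theorem in both its Abelian and Tauberian forms (Theorem 1.7.1 and Corollary 1.7.3 in \cite{Bingham1987}) to the three power series above. For $\beta\in(0,1)$, the condition $\mathbb{P}_A(\tau_A\geq n)=L_A(n)n^{-\beta}$ is equivalent, through the tail identity, to $(1-F(s))/(1-s)\sim \Gamma(1-\beta)(1-s)^{-(1-\beta)}L_A(1/(1-s))$ as $s\to 1^-$, hence to $1-F(s)\sim \Gamma(1-\beta)(1-s)^{\beta}L_A(1/(1-s))$. Inverting via $U(s)=1/(1-F(s))$ gives $U(s)\sim (1-s)^{-\beta}/(\Gamma(1-\beta)L_A(1/(1-s)))$, and a final Tauberian step translates this into $\mathcal{U}(N)\sim N^{\beta}/(\Gamma(1+\beta)\Gamma(1-\beta)L_A(N))$, i.e. the $\beta$-regularity of $G_{A,A}$. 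Since each step is an equivalence under the nonnegativity of the coefficients, (i) and (ii) are equivalent; this simultaneously identifies the slowly varying factor of $G_{A,A}$ as a constant multiple of the reciprocal of $L_A$.

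The main obstacle I anticipate is the careful handling of the boundary indices $\beta=1$ and $\beta=0$, where the Tauberian correspondences degenerate. At $\beta=1$ one is in the positive-recurrent regime and must instead invoke the elementary renewal theorem $\mathcal{U}(N)\sim N/\mathbb{E}_A[\tau_A]$ when $\mathbb{E}_A[\tau_A]<\infty$, and its slowly varying refinement when the mean is infinite but the tail remains $n^{-1}$ up to slow variation; at $\beta=0$ the return-time tail is itself slowly varying and the partial-sum asymptotics require the de Haan / slowly varying version of Karamata's theorem rather than the clean power-law form. A secondary point requiring care is to \emph{define} $L_A(n):=n^{\beta}\mathbb{P}_A(\tau_A\geq n)$ for every $n\geq 1$ and to verify that it is genuinely $\mathbb{R}_+$-valued and slowly varying, not merely asymptotically so, so that the representation in (ii) holds for all $n\geq 1$ exactly as stated.
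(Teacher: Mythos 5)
The paper never proves Proposition~\ref{prop:atomic}: as the parenthetical in its statement indicates, it is imported verbatim from \cite{Tjostheim-2001} (Theorem 3.1), so there is no in-paper proof to compare yours against. Judged on its own, your skeleton is the classical renewal-theoretic argument (and essentially the one behind the cited result): identify $G_{A,A}$ with the partial sums of the renewal sequence $u_n=\mathbb{P}_A(X_n\in A)$, pass to generating functions via $U(s)=1/(1-F(s))$ and the tail identity, and shuttle regular variation back and forth with Karamata's Tauberian theorem. For $\beta\in(0,1)$ the reduction to the atom (via Theorem 7.3 of \cite{Nummelin1984}), the identities, and your constants all check out.

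There are, however, two concrete gaps. First, the claim that ``each step is an equivalence under the nonnegativity of the coefficients'' is not accurate: Corollary 1.7.3 of \cite{Bingham1987} is an equivalence between the behaviour of the power series and that of the \emph{partial sums} of its coefficients; nonnegativity never lets you descend from partial sums to individual terms. In the direction (i) $\Rightarrow$ (ii), the Tauberian step leaves you with $\sum_{k\leq n}\mathbb{P}_A(\tau_A>k)\sim \Gamma(1-\beta)\,n^{1-\beta}L_A(n)/\Gamma(2-\beta)$, and to recover $\mathbb{P}_A(\tau_A>n)\sim n^{-\beta}L_A(n)$ you must invoke the monotone density theorem (Theorem 1.7.2 in \cite{Bingham1987}), which applies because the tail sequence is non-increasing and the index $1-\beta$ is strictly positive --- hence only for $\beta<1$ (note $\beta=0$ is covered, since there $1-\beta=1>0$). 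Second, your plan for the boundary case $\beta=1$ cannot work: the elementary renewal theorem yields only (ii) $\Rightarrow$ (i), and the converse is in fact \emph{false} for the statement as transcribed here. A positive recurrent chain with geometric return times has $G_{A,A}(t)\sim t/\mathbb{E}_A[\tau_A]$, hence is $1$-regular, yet $n\,\mathbb{P}_A(\tau_A\geq n)\to 0$ is not slowly varying, so no representation $\mathbb{P}_A(\tau_A\geq n)=L_A(n)\,n^{-1}$ with $L_A$ slowly varying exists. This is a defect of the transcription (the cited theorem concerns null recurrent chains, effectively $\beta<1$, and asymptotic equivalence rather than exact equality for all $n\geq 1$), not something your argument could repair; it is also harmless for the paper, which only invokes the proposition with $\beta\in(0,1)$.
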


As a direct consequence of Proposition \ref{prop:atomic}, we have that if $X$ is
regenerative and $\beta$-regular, then $\,\beta=\sup\{\theta\in [0,1]:\;\;
    \mathbb{E}_A\left[\tau_A^{\theta} \right]<+\infty\}$.

Based on the decomposition \eqref{eq:traj} of the whole sample path, limit theorems for
regenerative Markov chains can be derived from the application of their i.i.d.
counterparts to the sequence of blocks $(\mathcal{B}_k)_{k\geq 1}$, see \textit{e.g.}
\cite{Meyn2009}. This approach is usually referred to as the \textit{regenerative method}
and is extensively used to establish the asymptotic results stated in subsection
\ref{subsec:limit} in the atomic case. Notice however that the regenerative blocks
$\mathcal{B}_{1},\; \ldots,\; \mathcal{B}_{N_n}$, where $\Sigma_n(A)=N_n-1$ denotes the
(random) number of regenerations before time $n$, forming the truncated trajectory up to
time $n$ are not independent (the sum of their length being less than $n$ in particular),
which causes technical difficulties when establishing higher-order or non-asymptotic
results, see \textit{e.g.} \cite{Bertail2006} or \cite{bertail2004edgeworth} and the
references therein.

The inference technique for the regularity index $\beta$ of the chain $X$ developed in
subsection \ref{subsec:main} is based on characterization $(ii)$: the parameter $\beta$
is the tail index of a discrete generalized Pareto r.v., the regeneration time namely,
\textit{i.e.} the conditional survivor function of $\tau_A$ given $X_0\in A$.
Incidentally, notice that the parameter $\beta$ does not depend on the atom $A$
considered (in contrast to the estimator analysed in subsection \ref{subsec:main}).

Based on a (random) number $N_n$ of (dependent) realizations of the regenerative time, namely
\begin{equation*}
    S_j=\tau_A(j+1)-\tau_A(j) \text{ for } j=1,\; \ldots,\; N_n,
\end{equation*}
one may naturally compute the estimator \eqref{eq:est}. As will be shown in Theorem \ref{th:consistency_markov}, in spite of
the dependence structure between the $S_j$'s, the consistency property is preserved in
the Markovian framework.


\subsection{Limit Theorems of the occupation times for Regular Markov Chains}\label{subsec:limit}

We now recall the limit results related to the behaviour of the random occupation times
$\Sigma_n(.)$ for regular Markov chains and discuss their limitations regarding their
possible use to infer the regularity index $\beta$ with (asymptotic) guarantees. The
latter essentially reveals that the empirical occupation measures $\Sigma_n(B)$ of Harris
sets $B$ grow at the sublinear rate $n^{\beta}$ (up to a slowly varying factor). As shall
be seen, however, due to the great dispersion of their (asymptotic) distribution, the
empirical occupation measures can hardly be used directly to estimate the key parameter
$\beta$.

The result stated below claims that the logarithm of the occupation time of any Harris
set provides a strongly consistent estimator of the regularity index $\beta$ when
appropriately normalized. It corresponds to the comment in Remark 3.7 of
\cite{Tjostheim-2001}.

\begin{proposition}
    Suppose that the chain $X$ is $\beta$-regular with $\beta\in (0,1)$ and $B$ is a Harris set. Let $\nu$ be its initial probability distribution. Then, we have:
    \begin{equation}\label{eq:as_convergence_occupation_time_estimator}
        \widetilde{\beta}_n(B)\to \beta\;\; \mathbb{P}_{\nu}- \text{a.s., as } n\to +\infty,
    \end{equation}
    where
    \begin{equation}\label{eq:est3}
        \widetilde{\beta}_n(B)=\frac{\ln \Sigma_n(B)}{\ln n}.
    \end{equation}

\end{proposition}

It was pointed out in Remark 3.7 of \cite{Tjostheim-2001} that this estimator is of
limited practical use due to its slow rate of convergence, although no specific rate was
given therein. Equation (4.2) in \cite{Chen1999} may suggest that
$|\widetilde{\beta}_n(B)-\beta|$ is almost-surely $O(\ln{K_{\nu ,D}(n)}/\ln n)$ as $n\to
    +\infty$. However, this has not been proven. In the following result, we obtain the limit
distribution of the estimator and show that under the additional assumption that
$L_{\nu,D}$ has a finite non-zero limit, the estimator has a logarithmic rate of
convergence.

\begin{theorem}\label{th:rate_convergence}Suppose that the chain $X$ is $\beta$-regular with $\beta\in (0,1)$, $B$ is a Harris set, and $\nu$ is its initial probability distribution. Then, as $n\to +\infty$,
    \begin{equation*}
        \ln(n)\left( \widetilde{\beta}_n(B)-\beta - {\frac{\ln{L_{\nu,D}}}{\ln{n}}} \right)\Rightarrow \ln{\left( \mu(B)/(Z_{\beta})^{\beta} \right)} \text{ in } \mathbb{P}_{\nu}\text{-distribution}.
    \end{equation*}
    where $Z_{\beta}$ is a stable random variable with Laplace transform
    \[\psi_{\beta}\left ( t \right )=\exp\left (-t^{\beta}/\Gamma( \beta+1) \right ),\quad t\geq 0.\]
    In addition, if $\lim_{n\to +\infty}L_{\nu,D}(n)$ exists and is not $0$, then, there
    exists a constant $\kappa>0$ such that
    \begin{equation*}
        \ln(n)\left( \widetilde{\beta}_n(B)-\beta \right)\Rightarrow \ln{\left( \kappa/(Z_{\beta})^{\beta} \right)} \text{ in } \P_{\nu}\text{-distribution}.
    \end{equation*}
\end{theorem}


The almost sure convergence of $\widetilde{\beta}_n(B)$ towards $\beta$ suggests that,
for $n$ large enough, $\ln\Sigma_n(B)\approx\beta\ln(n)$ and that the log-log plot of
$\Sigma_n(B)$ and $n$ should look like a linear function with slope $\beta$, which could
be possibly used to infer the value of $\beta$. Unfortunately, the dispersion of such a
plot (and that of the process $\sigma_n(B)$, asymptotically described by Theorem
\ref{thm:funclimit} of the Supplementary Material) is way too large in practice. To
illustrate this, we simulated a Simple Symmetric Random Walk ($\beta=0.5$) with $n=10^5$
points, and we computed $\ln\Sigma_{\lfloor {nt} \rfloor}(B)$ for $0.1\leq t\leq 1$
(choosing $B=\{0\}$, which is an atom for this regenerative regular chain). The outcomes
of this simulation are depicted in Fig. \ref{fig:log_log_plot}.

\begin{figure}[ht!]
    \centering
    \includegraphics[width=0.65\textwidth]{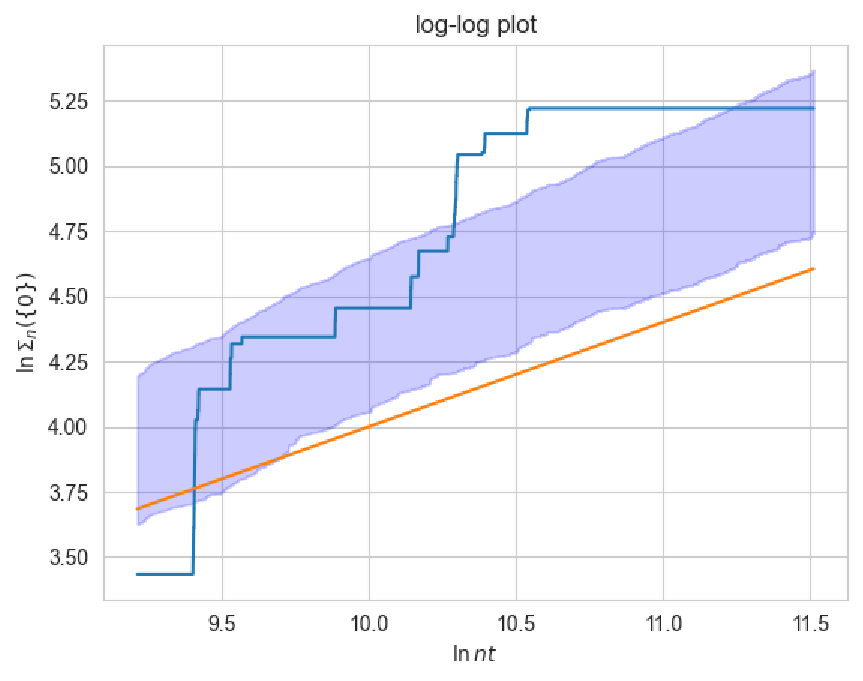}
    \caption{Log-log plot of $\ln\Sigma_{\lfloor {nt} \rfloor}(\{0\})$ in dark blue, the orange line representing the plot of the linear function $x\mapsto 0.5\times x$, while the blue area represents the 95\% confidence interval for $\ln\Sigma_{\lfloor {nt} \rfloor}(\{0\})$ calculated from 100 independent trajectories.}
    \label{fig:log_log_plot}
\end{figure}

This simulation illustrates in particular the slow convergence of
$\widetilde{\beta}_n(B)$ described in Corollary \ref{th:rate_convergence}. Hence, in the
regenerative case, it is more suitable to exploit the tail behaviour of the regenerative
times (\textit{cf} Proposition \ref{prop:atomic}) to estimate the regularity index
$\beta$, as shall be investigated in the next subsection.

\subsection{Regularity Index of a Regular Chain - Statistical Inference}\label{subsec:main}

Assume that $X$ is a regenerative regular chain with atom $A$ and unknown regularity
index $\beta\in (0,1)$, and suppose that a sample path $X_1,\; \ldots,\; n$ of length
$n\geq 1$ is observed. Because the chain is Harris recurrent, the number of observed
regeneration times $\Sigma_n(A)$ almost-surely tends to $+\infty$ as $n\to+\infty$.
Hence, with probability 1, we have $N_n\geq 1$ for $n$ large enough and one can define
\begin{equation}\label{eq:emp_surv}
    \widehat{p}^{(N_n)}_l=\frac{1}{N_n}\sum_{i=1}^{N_n}\mathbb{I}\{S_i>e^l \} \text{ for any } l\in \mathbb{R}
\end{equation}
and form the statistic
\begin{equation}\label{eq:est2}
    \widehat{\beta}_{N_n}(k)=\ln \left( \widehat{p}^{(N_n)}_k/\widehat{p}^{(N_n)}_{k+1} \right),
\end{equation}
provided that $\widehat{p}^{(N_n)}_{k+1}>0$. We point out that, due to the randomness of
$N_n$, \eqref{eq:emp_surv} is a biased (strongly consistent) estimator of
$p_l=\mathbb{P}_A(\tau_A>e^l)$ for any $l\in \mathbb{R}$. The estimator \eqref{eq:est2}
is of the same form as \eqref{eq:est} except that the number $N_n$ of observations is
random, and it is not independent of the sequence \(\{S_i\}\) of observations
(in particular $S_1+\ldots+S_{N_n}\leq n$).

Obtaining properties of this new estimator requires understanding the behavior of
\(N_n\), to this, the key is the following result, proved in \cite{Tjostheim-2001}, which gives
the asymptotic distribution of \(N_n\).

\begin{proposition}\label{prop:markov:limit_dist}(\cite{Tjostheim-2001}, Theorem 3.2) If \(X\) is an atomic \(\beta\)-regular
    Markov chain with initial probability distribution \(\nu\), then
    \begin{equation*}
        N_n \overline{F}(n) \Rightarrow \frac{M_{\beta}(1)}{\Gamma(1-\beta)} \text{ in } \P_{\nu}\text{-distribution}.
    \end{equation*}
    where \(M_{\beta}(1)\) is a Mittag-Leffler distribution with parameter \(\beta\).
\end{proposition}

From Proposition \ref{prop:markov:limit_dist} we can deduce the following corollary which
provides a deterministic control of \(N_n\) with high probability.

\begin{corollary}\label{cor:markov:deterministic_control} For any \(\delta \in (0,1)\), let \(H_\delta> R_\delta>0\) be such that
    \begin{equation*}
        \P \left(\frac{M_\beta(1)}{\Gamma(1-\beta)}\in \left[R_\delta, H_\delta\right]\right)\geq 1-\delta/2.
    \end{equation*}
    Then, there exists \(n_\delta\)
    such that
    \begin{align*}
        \forall n\geq n_\delta\quad\P_{\nu}\left( N_n\in \left[ \frac{R_{\delta}}{\overline{F}(n)}, \frac{H_{\delta}}{\overline{F}(n)} \right] \right)>1-\delta.
    \end{align*}
\end{corollary}

Equipped with Corollary \ref{cor:markov:deterministic_control}, we can provide the following result,
which is the analogue of Proposition \ref{bias_variance_thm} in the Markovian case.

\begin{proposition}\label{prop:markov:bias_variance_thm}
    Let \(X\) be an atomic Markov chain, \(\beta\)-regular with \(\beta\in(0,1)\) and with initial probability
    distribution \(\nu\). Let \(\delta\in (0,\; 1/2)\), take \(H_\delta, R_{\delta}\) and \(n_\delta\)
    as in Corollary \ref{cor:markov:deterministic_control} and set \(v_n( \delta) = \ln ( {36/\delta})\overline{F}(n)/H_{\delta}\).
    Then, as soon as \(n\geq n_\delta\) and \(p_{k+1}\geq {({40H_\delta}R^{-1}_\delta)}^2 v_n(\delta)\), we have with probability larger than $1-2\delta$:
    \begin{equation*}
        \left|\widehat{\beta}_{N_n}(k)-\beta\right|\leq \frac{60 H_\delta}{R_{\delta}}\sqrt{\frac{v_n(\delta)}{p_{k+1}}} + \left| \ln\left(\frac{L(e^{k})}{L(e^{k+1})}\right) \right|.
    \end{equation*}
\end{proposition}

\begin{theorem}\label{th:consistency_markov} Suppose that the atomic chain $X$ is $\beta$-regular
    with $\beta\in (0,1)$. Let $\nu$ be its initial probability distribution. If
    \(k_n\to +\infty\) s.t. $(\ln n)\exp(k_n\beta)/n=o(L_A(\exp k_n))$ as
    $n\rightarrow +\infty$, then
    the estimator \eqref{eq:est2} is strongly consistent:
    \begin{equation}
        \widehat{\beta}_{N_n}(k_{N_n})\to \beta \;\;\; \mathbb{P}_{\nu}-a.s \text{ as } n\to +\infty.
    \end{equation}
    In particular, strong consistency holds for $\widehat{\beta}_{N_n}(A\ln N_n)$ with \(A<1\).
\end{theorem}

\begin{remark}\label{remark:convergence_rates} {\sc (On investigating convergence rates)} Due to the impossibility of tightly
    controlling the sequence $N_n$ by a deterministic quantity in probability and the non-linearity of
    the estimator, we have not been able to extend to the Markovian case the asymptotic normality results
    of Theorem \ref{asymptotic_normality} and Corollary
    \ref{asymptotic_normality_real_value} via Anscombe's theorem \cite[Theorem 7.3.2]{Gut2013}.
    Heuristically, if in Theorem \ref{asymptotic_normality} we take
    $k_n=\ln N_n$ and replace $N_n$ by
    its approximate expectation $n^{\beta}L_{\nu, A}(n)$ \cite[Lemma 3.3]{Tjostheim-2001},
    we would get a convergence rate of order $n^{-\beta\left(1-\beta\right)/2}L_1\left(n\right)$, where
    $L_1\left(n\right)$ is the slowly varying function given by
    $\sqrt{L_{\nu, A}(n^\beta L_{\nu, A}(n))/L_{\nu, A}(n)^{1-\beta}}$.
    This suggests a convergence rate of order $n^{-\beta(1-\beta)/2}$ when $L_{\nu, A}\sim C>0$. However,
    we have not been able to prove this claim.
\end{remark}

\begin{remark} {\sc (Trajectories of random length)} Suppose that the trajectory is observed until
    the $N$-th regeneration, \textit{i.e.} $n=\tau_A(N)$, with $N\geq 2$. In this case, we will obtain a
    sequence of $N$ i.i.d blocks whose sizes follow the heavy-tailed distribution described in
    \eqref{eq:Zipf}, and therefore, the results of Section \ref{subsec:main_results} can be applied directly
    to this sequence. Notice that in this case, the total number of points observed in the chain
    (i.e. the amount of time we need to wait to collect the $N$ blocks) is a random variable that, while
    finite with probability one, has infinite expectation.
\end{remark}

\begin{remark} {\sc (The (atomic) positive recurrent case)} When the chain is positive recurrent (or equivalently
    $1$-regular), the estimator \eqref{eq:est2} can be naturally used to estimate the tail index $\beta'\geq 1$ of
    the regeneration time, when the latter has a regularly varying distribution. Dedicated theoretical results can be
    found in section \ref{subsec:positive_recurrent_case} of the Appendix.
\end{remark}

\subsection{Simulation Experiments}

In order to analyze empirically the finite sample behavior of our estimator in the Markovian case,
we dedicate this subsection to a simulation example. As a generative model for our experiments we
use the Bessel random walks, defined in Example \ref{ex1}, as it will allow us to study
the accuracy of our estimator, not just for different sample sizes, but also for different
values of \(\beta\).

For different values of \(\beta\) and \(n\) we have performed the following two-fold experiment:
\begin{enumerate}
    \item First, we have generated one path of a $\beta$-regular Bessel random walk of length \(n\). We have then computed the estimator
          \(\widehat{\beta}_{N_{n}}(k)\) based on this trajectory and plotted the results in order to determine
          the value \(k_{\beta,n}\) where it first stabilizes.
    \item We have simulated \(2000\) independent paths of a $\beta$-regular Bessel random walk of length \(n\). Then, we have computed the estimator
          $\widehat{\beta}_{N_{n}}(k_{\beta,n})$ based on each of these trajectories, as well as the estimator
          $\widetilde{\beta}_n(\{0\})$.
\end{enumerate}

The results of this experiment are presented in Table~\ref{tab:experiment_results}.
They show that the accuracy of our estimator greatly increases as the length \(n\) gets larger (notice
that the bias and variance are both divided by two in order of magnitude when \(n\) increases from
\(10^3\) to \(10^6\)). The histograms shown in Figure~\ref{fig:monte_carlo_markov_chain}
hint a possible asymptotic normality of the estimator $\widehat{\beta}_{N_{n}}$.
The simulation study also suggests that for moderately large sample sizes ($n\geq 10^5$) our estimator
$\widehat{\beta}_{N_{n}}$ outperforms \(\widetilde{\beta}_n\) as it has less bias
and comparable variance, while for smaller sample sizes \(\widetilde{\beta}_n\) works better.
This behavior can be attributed to the nature of our estimator, which focuses on
estimating the probabilities of tail events \(\{S > e^k\}\) to approximate the quantity
\(\beta(k)\) and use the fact that, as \(k\) tends to infinity, \(\beta(k)\) converges to
\(\beta\) (see \eqref{beta_k_eq}). When \(n\) is small, there
are fewer sample points available in the tail, which forces us to select smaller values
of \(k\) to obtain accurate estimations of the tail probability, which worsens the
estimation of \(\beta\).

\begin{table}[ht!]
    \centering
    \begin{tabular}{|c|c|c|cc|cc|}
        \hline
        \multirow{2}{*}{$\beta$} & \multirow{2}{*}{$n$} & \multirow{2}{*}{$k_{\beta,n}$} & \multicolumn{2}{c|}{\(|\textnormal{Bias}|\)} & \multicolumn{2}{c|}{Variance}                                                            \\
                                 &                      &                                & $\widehat{\beta}_{N_{n}}$                    & $\widetilde{\beta}_n(\{0\}$   & $\widehat{\beta}_{N_{n}}$ & $\widetilde{\beta}_n(\{0\})$ \\ \hline
        \multirow{4}{*}{0.5}     & $10^3$               & 2                              & 0.22255                                      & \textbf{0.07987}              & 0.11062                   & \textbf{0.01932}             \\
                                 & $10^4$               & 2                              & 0.12246                                      & \textbf{0.06697}              & 0.06032                   & \textbf{0.01317}             \\
                                 & $10^5$               & 3                              & \textbf{0.01024}                             & 0.05424                       & 0.02921                   & \textbf{0.00825}             \\
                                 & $10^6$               & 3                              & \textbf{0.00343}                             & 0.04622                       & 0.01309                   & \textbf{0.00591}             \\ \hline
        \multirow{4}{*}{0.7}     & $10^3$               & 2                              & 0.20290                                      & \textbf{0.13154}              & 0.12575                   & \textbf{0.01580}             \\
                                 & $10^4$               & 3                              & \textbf{0.02885}                             & 0.10598                       & 0.05673                   & \textbf{0.00969}             \\
                                 & $10^5$               & 3                              & \textbf{0.00500}                             & 0.08249                       & 0.01680                   & \textbf{0.00604}             \\
                                 & $10^6$               & 3                              & \textbf{0.00476}                             & 0.06961                       & \textbf{0.00389}          & 0.00411                      \\ \hline
        \multirow{4}{*}{0.9}     & $10^3$               & 3                              & 0.27182                                      & \textbf{0.20158}              & 0.15869                   & \textbf{0.00966}             \\
                                 & $10^4$               & 3                              & \textbf{0.03128}                             & 0.16307                       & 0.04685                   & \textbf{0.00539}             \\
                                 & $10^5$               & 4                              & \textbf{0.02338}                             & 0.13529                       & 0.01807                   & \textbf{0.00286}             \\
                                 & $10^6$               & 5                              & \textbf{0.00704}                             & 0.11502                       & 0.00606                   & \textbf{0.00177}             \\ \hline
    \end{tabular}
    \caption{Bias-Variance results of the simulation example.}\label{tab:experiment_results}
\end{table}


\begin{figure}[ht!]
    \centering
    \begin{subcaptiongroup}
        \centering
        \parbox[b]{.49\textwidth}{ \centering \includegraphics[width=0.49\textwidth,
                height=0.4\textwidth]{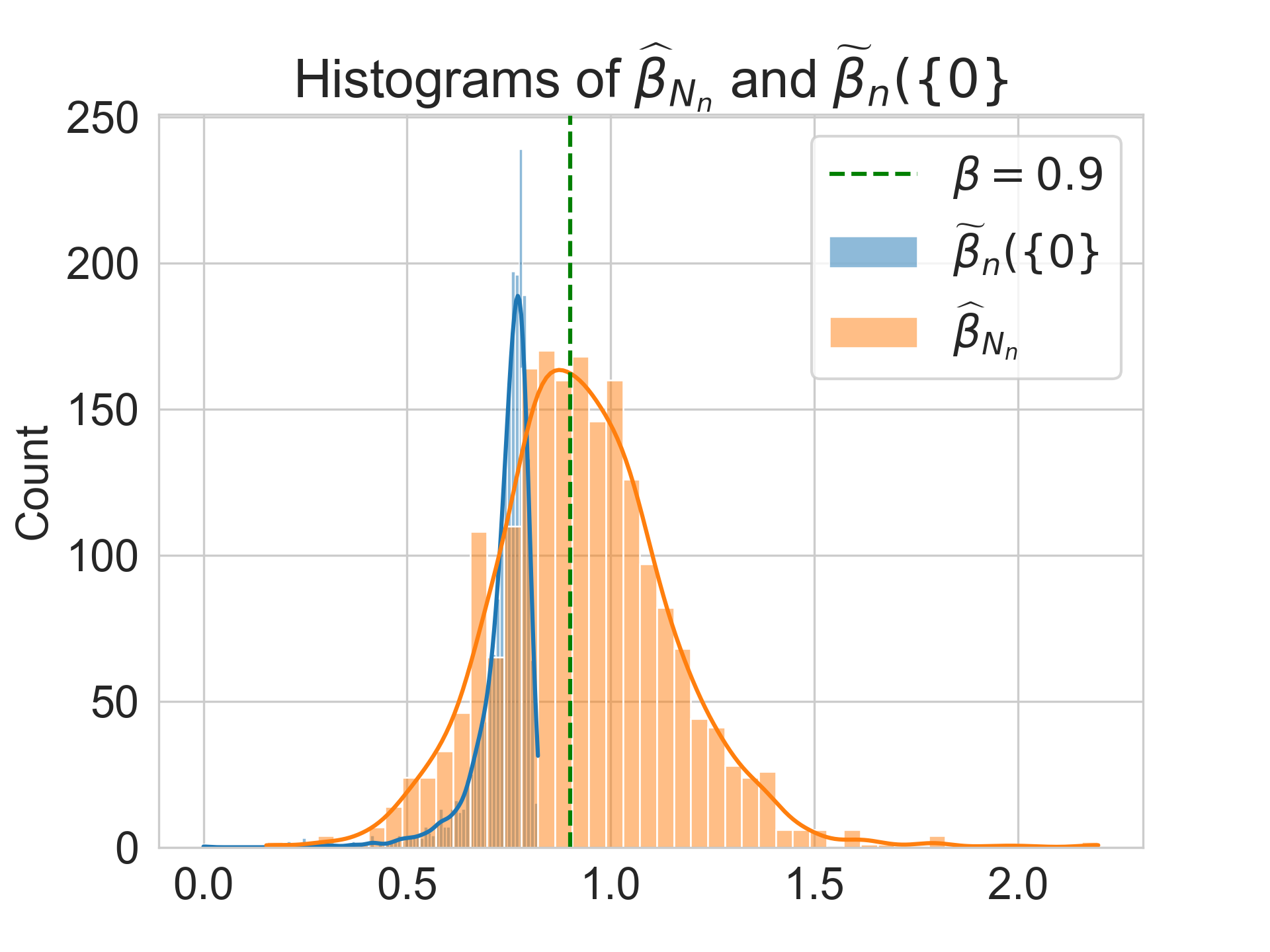} \caption{$\beta=0.9$, $n=10^4$}}
        \parbox[b]{.49\textwidth}{ \centering \includegraphics[width=0.49\textwidth,
                height=0.4\textwidth]{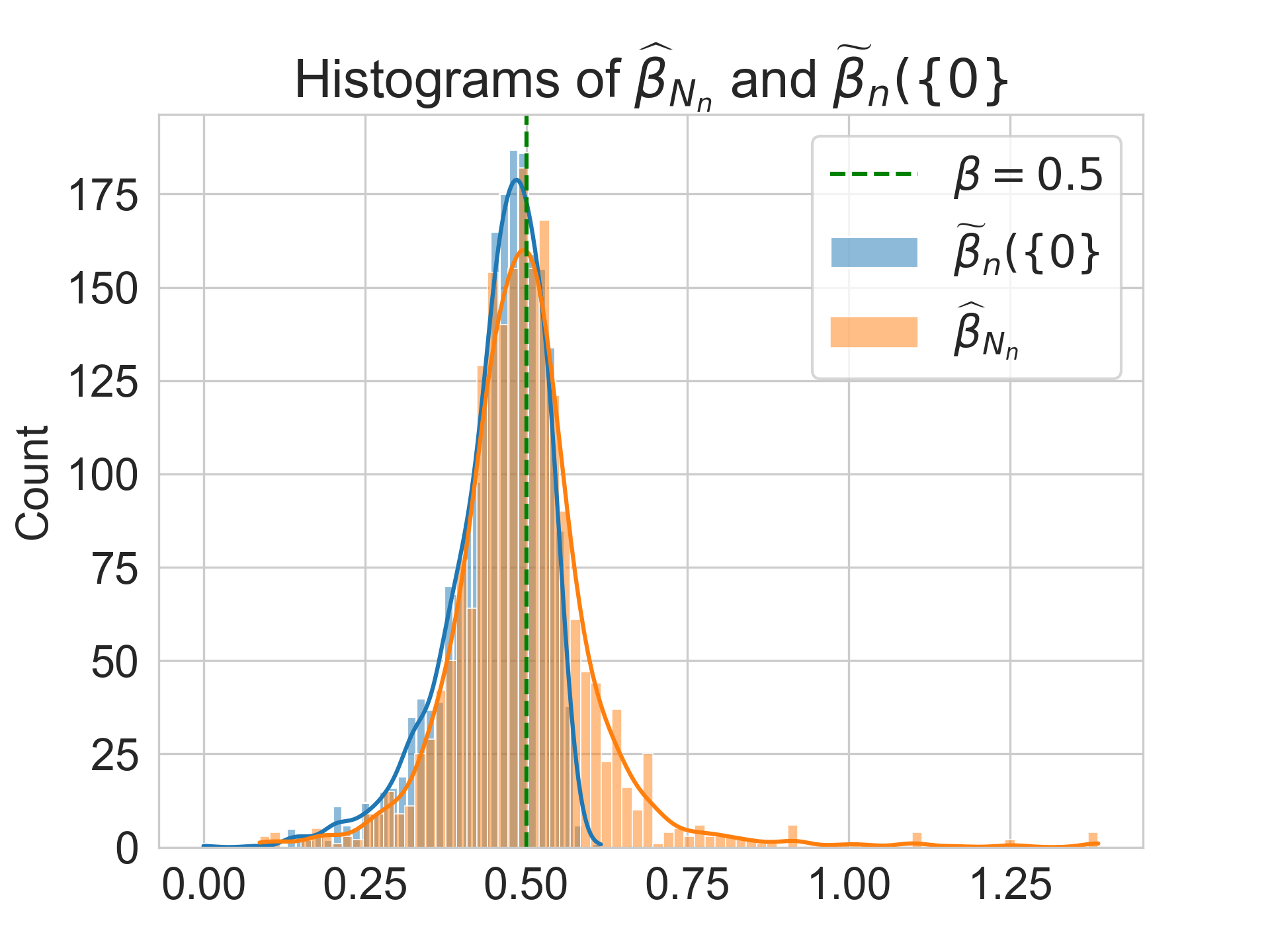}
            \caption{$\beta=0.5$, $n=10^6$}}
    \end{subcaptiongroup}
    \caption{KDE for the estimators $\widehat{\beta}_{N_{n}}$ and $\widetilde{\beta}_{N_{n}}(\{0\})$.}
    \label{fig:monte_carlo_markov_chain}
\end{figure}

\subsection{Perspectives - Extension to the Pseudo-regenerative Case}\label{subsec:extension_nummelin}

Harris chains are not necessarily regenerative, of course. However, the construction
proposed in \cite{Nummelin1978, Nummelin1984}, referred to as the \textit{Nummelin
    splitting technique}, permits to build a regenerative extension of any Harris chain (see
section \ref{sec:pseudo_regeneration} for a detailed description). Essentially, given a
Harris recurrent Markov chain \(X\), this technique allows the construction an atomic
chain, called the ``split chain'' that has a very similar communication structure as the
original chain. Moreover, if \(X\) is \(\beta\)-regular, then the ``split chain'' is also
\(\beta\)-regular with the same value of \(\beta\) \cite[pp. 19]{Chen1999}.

A data-driven algorithm (see section \ref{sec:samples_split_chain}) for obtaining samples
of the split chain, given a finite sample of \(X\) and the transition kernel density
\(\pi\), has been proposed in \cite{BertailClemencon2006}. We applied this algorithm to a
simple random walk on $\R$, defined as $X_0=0$, $X_{n+1}=X_{n}+Z_n$ for $n\geq 1$, where
$\{Z_n\}$ is a sequence of independent standard normal random variables.\footnote{The
    transition kernel density of this Markov chain is $\pi(x,y)=f(y-x)$, where $f$ is the
    standard normal density function.} After constructing the split chain, we applied our
estimator to it. The results, presented in Fig. \ref{fig:random_walk_construction}, show
that when using the estimator $\widehat{\beta}_{N_n}(k)$ on the split chain, the
estimates of $\beta$ closely approximate the true value ($\beta=1/2$) when $k$ is chosen
near $\ln N_{n}$.

\begin{figure}[ht!]
    \centering
    \begin{subcaptiongroup}
        \centering
        \parbox[b]{.49\textwidth}{ \centering
            \includegraphics[width=0.49\textwidth]{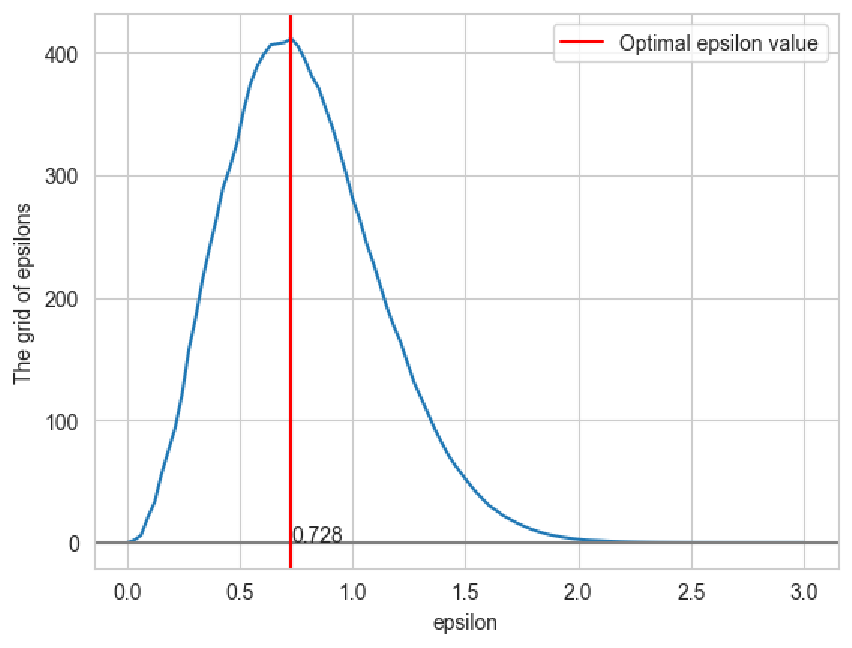}
            \caption{Selection of the value of $\epsilon$.}}
        \parbox[b]{.49\textwidth}{ \centering
            \includegraphics[width=0.49\textwidth]{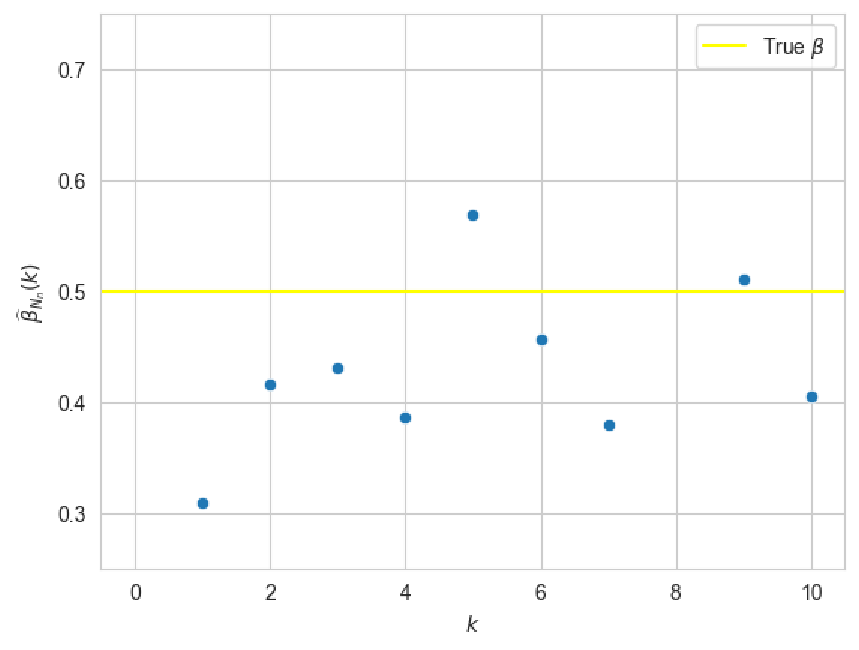}
            \caption{$\beta$ estimations.}}
        \parbox[b]{.65\textwidth}{ \centering
            \includegraphics[width=0.6\textwidth]{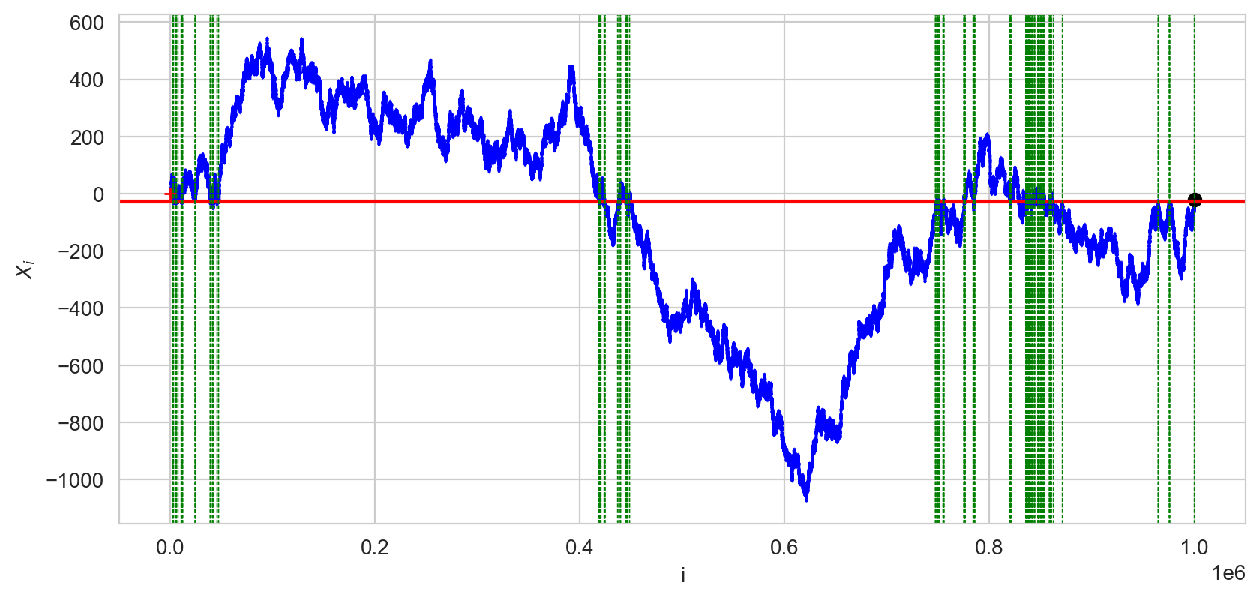}
            \caption{ Dividing the trajectories into data blocks corresponding to the
                pseudo-regeneration times. The small set (-28.2203, -26.7643) is marked in
                red, the visits to the pseudo-atom are shown as green dotted lines.}}
    \end{subcaptiongroup}
    \caption{Application of the pseudo-regeneration technique to construct the split chain and estimate
    $\beta$ using $\widehat{\beta}_{N_n}(k)$ in the random walk $X_{n+1}=X_{n}+Z_n$ where $Z_n$
    is an i.i.d. sequence of standard normal random variables. The total number
    of observed points in the chain is $10^6$ and the number of pseudo-blocks ($N_n$) is 418.}
    \label{fig:random_walk_construction}
\end{figure}

When the kernel density is unknown, a procedure to approximate the split chain, based on
an estimation $\widehat{\pi}_n$ of the kernel was presented as Algorithm 3 in
\cite{BertailClemencon2006}. As indicated in Theorem 3.1 of \cite{Bertail2006}, the
accuracy of this construction depends on the rate at which $\pi$ is estimated by
$\widehat{\pi}_n$. To our knowledge, in the null-recurrent case, the sole consistent
estimator of the transition density documented in the literature is the Nadaraya-Watson
estimator. The proof of the consistency can be found in section 5 of
\cite{Tjostheim-2001}. However, no results regarding its rate of convergence have been
established so far. Moreover, the practical choice of the bandwidth parameter involved in
this estimator is a difficult and largely unresolved problem, as discussed on pp. 412 in
\cite{Tjostheim-2001}. Hence, an ambitious line of further research consists in
understanding how to implement practically the approximate regenerative block
construction presented in \cite{BertailClemencon2006} in order to extend the estimation
methodology studied in the previous subsection to the regular pseudo-regenerative case
with theoretical guarantees.

\backmatter








\section*{Declarations}

\begin{itemize}
    \item Funding: This research has been conducted as part of the project Labex MME-DII
          (ANR11-LBX-0023-01).
    \item Competing Interests: The authors have no competing interests to declare that
          are relevant to the content of this article.
    \item Ethics approval: Not applicable
    \item Consent to participate: Not applicable
    \item Consent for publication: Not applicable
    \item Availability of data and materials: Not applicable
    \item Code availability: The code used to generate the results in the paper is
          available at \url{https://github.com/carlosds731/tail_index_estimation}
    \item Authors' contributions: This paper is part of Carlos Fernández PhD thesis.
\end{itemize}


\bibliography{biblio}

\newpage

\setcounter{page}{1}

\begin{appendices}
\section{Technical Proofs}\label{sec:proofs}

This appendix contains the technical proofs of all the new results presented in the
paper. Throughout this section we will use $u_n(\delta)$ to denxote $\ln \left(
    {2/\delta } \right)/n$ for $\delta>0$ and $n\in\N$.

\subsection{Proof of Proposition \ref{bias_variance_thm}}

By the triangular inequality and equation \eqref{beta_k_eq}, we have
\begin{equation*}
    \left|\widehat{\beta}_n(k)-\beta\right| \leq
    \left|\widehat{\beta}_n(k)-\beta(k)\right| + \left| \ln\left(\frac{L(e^{k})}{L(e^{k+1})}\right) \right|.
\end{equation*}
Proposition \ref{bias_variance_thm} now follows by Lemma \ref{varianceBoundLemma}.

\begin{lemma}\label{varianceBoundLemma}
    Let \(\delta>0\) and \(k\) such that \(p_{k+1}\geq 16u_n(\delta)\), then
    \begin{equation}\label{varianceBoundEq}
        \left|\widehat{\beta}_n(k)-\beta(k)\right|\leq 6\sqrt{\frac{u_n (\delta)}{p_{k+1}}},
    \end{equation}
    with probability larger than \(1-2\delta\).
\end{lemma}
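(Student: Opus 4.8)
The plan is to control the two empirical tail probabilities $\widehat{p}^{(n)}_k$ and $\widehat{p}^{(n)}_{k+1}$ separately and then recombine them through the logarithm. First I would write
\[
\widehat{\beta}_n(k)-\beta(k)=\ln\!\left(\frac{\widehat{p}^{(n)}_k}{p_k}\right)-\ln\!\left(\frac{\widehat{p}^{(n)}_{k+1}}{p_{k+1}}\right),
\]
so that the error is governed by the \emph{relative} fluctuations $\eta_l:=(\widehat{p}^{(n)}_l-p_l)/p_l$ for $l\in\{k,k+1\}$. Since $n\widehat{p}^{(n)}_l$ is $\mathrm{Binomial}(n,p_l)$, a variance-sensitive concentration inequality is essential here: Hoeffding's bound is too crude (it ignores the variance and would only give the rate $\sqrt{u_n(\delta)}/p_l$), whereas Bernstein's inequality, together with $\mathrm{Var}(\mathbb{I}\{S_i>e^l\})=p_l(1-p_l)\leq p_l$, yields for each fixed $l$, with probability at least $1-\delta$,
\[
|\widehat{p}^{(n)}_l-p_l|\leq\sqrt{2p_l\,u_n(\delta)}+\tfrac{2}{3}u_n(\delta).
\]

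Next I would pass to relative deviations. Using the monotonicity $p_k\geq p_{k+1}$ of the survivor function (so that $p_l\geq p_{k+1}$ for both indices) and the hypothesis $p_{k+1}\geq 16\,u_n(\delta)$, dividing the last display by $p_l$ gives, with $r:=u_n(\delta)/p_{k+1}\leq 1/16$,
\[
|\eta_l|\leq\sqrt{2r}+\tfrac{2}{3}r.
\]
This quantity is strictly below $1/2$, which is exactly where the constant $16$ earns its keep: it forces the relative fluctuations into the regime where the logarithm may be linearised. In particular $\widehat{p}^{(n)}_{k+1}\geq p_{k+1}/2>0$ on this event, so the estimator is genuinely defined and not set to the convention value $0$.

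Finally I would linearise. On $\{|x|\leq\rho\}$ with $\rho<1$ one has the elementary inequality $|\ln(1+x)|\leq |\ln(1-\rho)|\,|x|/\rho$, and with $\rho=\sqrt{2r}+\tfrac{2}{3}r$ (so $\rho\leq\sqrt{2}/4+1/24$) the prefactor is a constant $C\leq 3/2$. Hence
\[
|\widehat{\beta}_n(k)-\beta(k)|\leq|\ln(1+\eta_k)|+|\ln(1+\eta_{k+1})|\leq 2C\left(\sqrt{2r}+\tfrac{2}{3}r\right),
\]
and a short numerical check using $r\leq 1/16$ (equivalently $\sqrt{r}\leq 1/4$) confirms that the right-hand side is at most $6\sqrt{r}=6\sqrt{u_n(\delta)/p_{k+1}}$. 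The two concentration events each fail with probability at most $\delta$, so a union bound delivers the bound with probability at least $1-2\delta$. The only genuinely delicate point is the constant bookkeeping in these last two steps: one must check simultaneously that the threshold $16$ keeps the relative error safely below $1/2$ (so that linearising the logarithm is valid at all) and that, after summing the two contributions and absorbing the linearisation constant $C$, everything still fits under the stated factor $6$. I expect this arithmetic, rather than any conceptual difficulty, to be where care is required.
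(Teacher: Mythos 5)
Your proof is correct and follows essentially the same route as the paper's: Bernstein-type concentration applied separately to $\widehat{p}^{(n)}_k$ and $\widehat{p}^{(n)}_{k+1}$, conversion to relative deviations using $p_k\geq p_{k+1}\geq 16\,u_n(\delta)$, linearisation of the logarithm on the event where the relative error is below $1/2$, a triangle inequality, and a union bound costing $2\delta$. The only cosmetic differences are that the paper invokes a pre-packaged Bernoulli--Bernstein lemma (from the supplementary material of Carpentier and Kim) giving $|\widehat{p}^{(n)}_l-p_l|\leq 2\sqrt{p_l\,u_n(\delta)}$ under $p_l\geq 4u_n(\delta)$, and linearises via $\ln(1+x)\leq x$ and $\ln(1-x)\geq -3x/2$ for $x<1/2$, whereas you keep Bernstein's linear term explicit and use the inequality $|\ln(1+x)|\leq |\ln(1-\rho)|\,|x|/\rho$; the constant bookkeeping is equivalent in both cases.
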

\begin{proof}
    In order to prove this result, we need the following lemma, proved in the
    Supplementary Material of \cite{Carpentier2015}.

    \begin{lemma}\label{bernstein}\textbf{Bernstein's inequality for Bernoulli random variables}
        Let \(W_1,\ldots,W_n\) be i.i.d. samples from a distribution \(F\), and we
        define \(p_k=1-F(e^k)\), \({\widehat p^{(n)}_k} =n^{-1}\sum_{i = 1}^n {\I\{ {{W_i} > {e^k}} \}}\).
        Let \(\delta>0\)
        and take \(n\) large enough so that \(p_k\geq 4u_n(\delta)\). Then, with
        probability \(1-\delta\),
        \begin{equation*}
            \left|\widehat{p}^{(n)}_k-p_k\right|\leq 2\sqrt{p_k u_n(\delta)}.
        \end{equation*}
    \end{lemma}
    Because \(p_k\geq 16 u_n(\delta)\) we can apply the previous lemma, then
    with probability greater than \(1-\delta\) we have
    \begin{align*}
        -2\sqrt{p_k u_n(\delta)}                          & \leq \widehat{p}^{(n)}_k-p_k\leq 2\sqrt{p_k u_n(\delta)}                         \\
        p_k\left(1-2\sqrt{\frac{u_n(\delta)}{p_k}}\right) & \leq \widehat{p}^{(n)}_k \leq p_k\left(1+2\sqrt{\frac{u_n(\delta)}{p_k}}\right),
    \end{align*}
    taking the log in the previous equation we get
    \begin{align}
        \ln{p_k}+\ln{\left(1-2\sqrt{\frac{u_n(\delta)}{p_k}}\right)} & \leq \ln{\widehat{p}^{(n)}_k} \leq \ln{p_k}+\ln{\left(1+2\sqrt{\frac{u_n(\delta)}{p_k}}\right)}\nonumber   \\
        \ln{\left(1-2\sqrt{\frac{u_n(\delta)}{p_k}}\right)}          & \leq \ln{\widehat{p}^{(n)}_k} - \ln{p_k} \leq \ln{\left(1+2\sqrt{\frac{u_n(\delta)}{p_k}}\right)}\nonumber \\
        -3\sqrt{\frac{u_n(\delta)}{p_k}}                             & \leq \ln{\widehat{p}^{(n)}_k} - \ln{p_k} \leq 2\sqrt{\frac{u_n(\delta)}{p_k}},\label{ineque}
    \end{align}
    where the last pair of inequalities is obtained by using \(\ln{(1+x)}\leq
    x\) and \(\ln{(1-x)}\geq-3x/2\) for \(x<1/2\). Inequality \eqref{ineque} implies that
    \begin{equation}
        \left|\ln{\widehat{p}^{(n)}_k} - \ln{p_k}\right| \leq 3\sqrt{\frac{u_n(\delta)}{p_k}}\label{ineque2}
    \end{equation}
    with probability bigger than 1-\(\delta\). Applying \eqref{ineque2} for \(k+1\) we get with probability bigger than
    1-\(\delta\) that
    \begin{equation}
        \left|\ln{\widehat{p}^{(n)}_{k+1}} - \ln{p_{k+1}}\right| \leq 3\sqrt{\frac{u_n(\delta)}{p_{k+1}}}.\label{ineque3}
    \end{equation}
    Combining the triangular inequality with \eqref{ineque2} and \eqref{ineque3}
    completes the proof.
\end{proof}

\subsection{Proof of Theorem \ref{consistency}}

The first element in the proof of Theorem \ref{consistency} is the following simple
lemma, that shows that the non-empirical version of $\widehat{\beta}_n(k)$ converges
to $\beta$.

\begin{lemma}\label{limit_alpha}
    \begin{equation*}
        \lim_{k\to +\infty}{\beta(k)}=\beta.
    \end{equation*}
\end{lemma}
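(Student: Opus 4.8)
The plan is to read off the value of $\beta(k)$ directly from its closed-form expression and then invoke the defining property of a slowly varying function. Recall from \eqref{beta_k_eq} that
\begin{equation*}
    \beta(k)=\beta+\ln\left(\frac{L(e^{k})}{L(e^{k+1})}\right),
\end{equation*}
so proving the lemma amounts to showing that the logarithmic term tends to $0$ as $k\to+\infty$.

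First I would observe that $e^{k+1}=e\cdot e^{k}$, so the ratio inside the logarithm is
\begin{equation*}
    \frac{L(e^{k})}{L(e^{k+1})}=\left(\frac{L(e\cdot e^{k})}{L(e^{k})}\right)^{-1}.
\end{equation*}
By the very definition of slow variation, $L(\lambda z)/L(z)\to 1$ as $z\to+\infty$ for every fixed $\lambda>0$; applying this with $\lambda=e$ and $z=e^{k}$ (which indeed satisfies $e^{k}\to+\infty$ as $k\to+\infty$), I get $L(e\cdot e^{k})/L(e^{k})\to 1$, and hence $L(e^{k})/L(e^{k+1})\to 1$ as well.

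Finally, since the natural logarithm is continuous at $1$ with $\ln(1)=0$, the composite $\ln\left(L(e^{k})/L(e^{k+1})\right)\to 0$, and therefore $\beta(k)\to\beta$ as $k\to+\infty$, which is the claim. I do not anticipate any genuine obstacle here: the only points that require a moment's care are verifying the direction of the ratio (so that the correct instance of the slow-variation limit is used) and noting that the continuity of $\ln$ at $1$ legitimizes passing the limit through the logarithm.
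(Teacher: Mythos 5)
Your proof is correct and follows essentially the same route as the paper: both invoke the defining property of slow variation with $\lambda=e$ evaluated at $z=e^{k}\to+\infty$ to get $L(e^{k})/L(e^{k+1})\to 1$, and then pass the limit through the logarithm in \eqref{beta_k_eq}. The only cosmetic difference is that you spell out the inversion of the ratio and the continuity of $\ln$ at $1$, which the paper leaves implicit.
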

\begin{proof}
    Because \(L\) is slowly varying, \(\lim_{x\to\infty}{L(t x)}/{L(x)}=1\)
    (see 1.2.1 of \cite{Bingham1987}) for all \(t>0\), therefore
    \({L(e^{k})}/{L(e^{k+1})}={L(e^{k})}/{L(ee^k)}\to 1\) and the result
    follows after taking limits in (\ref{beta_k_eq}).
\end{proof}

Let \(\epsilon>0\). Because \(k_n\to\infty\), Lemma \ref{limit_alpha} implies that
\(\beta(k_n)\to\beta\), therefore we can find \(N_1\in\N\) such that, for all $n\geq
    N_1$
\begin{equation}\label{epsilon_n1}
    \left|\beta(k_n)-\beta\right|\leq \frac{\epsilon}{2}.
\end{equation}

Take \(\delta = {2}/{n^2}\), then \(u_n(\delta)={2\ln{n}}/{n}\). Because \(L\) is
slowly varying, \(L\left(e^{k_n+1}\right)\sim L\left(e^{k_n}\right)\), then
\(e^{k_n\beta}{\ln{n}}/{n}=o\left(L\left(e^{k_n+1}\right)\right)\) and we can find
\(N_2\in\N\) such that for all for $n\geq N_2$ we have
\(p_{k_n+1}={L(e^{k_n+1})}/{e^{(k_n+1)\beta}}\geq {32\ln{n}}/{n}=16u_n(\delta)\).
Therefore, we can apply Lemma \ref{varianceBoundLemma}, obtaining that, for all
$n\geq N_2$,
\begin{equation}\label{eq_n2}
    \P\left(\left|\widehat{\beta}_n(k_n)-\beta(k_n)\right|\leq 6\sqrt{\frac{2\ln{n}}{np_{k_n+1}}}\right)\geq 1-\frac{4}{n^2}.
\end{equation}

Combining the triangular inequality with equations (\ref{epsilon_n1}) and
(\ref{eq_n2}) we have that for all \(n\geq\max\left(N_1,N_2\right)\)
\begin{equation}\label{eq_bound}
    \left|\widehat{\beta}_n{(k_n)}-\beta\right| \leq 6\sqrt{\frac{2\ln{n}}{np_{k_n+1}}}+\frac{\epsilon}{2}
\end{equation}
with probability bigger than \(1-{4}/{n^2}\). Plugging
\(p_{k_n+1}={L(e^{k_n+1})}/{e^{(k_n+1)\beta}}\) in the first term of the
right-hand side of (\ref{eq_bound}), we get
\begin{equation*}
    6\sqrt{\frac{2\ln{n}}{np_{k_n+1}}}=6\sqrt{\frac{2\ln{n}}{n}\frac{e^{(k_n+1)\beta}}{L(e^{k_n+1})}}=6\sqrt{2e^{\beta}}\sqrt{\frac{\ln{n}}{n}\frac{e^{(k_n)\beta}}{L(e^{k_n+1})}}.
\end{equation*}

The assumption that \(e^{k_n\beta}{\ln{n}}/{n}=o(L(e^{k_n+1}))\) implies that the
above equality converges to \(0\), therefore we can find \(N_3\in\N\) such that
\(|6\sqrt{{2\ln{n}}/{(np_{k_n+1}}}|\leq {\epsilon}/{2}\) for all \(n\geq N_3\). Then,
for all \(n\geq\max(N_1,N_2, N_3)\)
\begin{equation*}
    \P\left(\left|\widehat{\beta}_n(k_n)-\beta(k_n)\right|\leq\epsilon\right)\geq 1-\frac{4}{n^2},
\end{equation*}
which shows that \(\widehat\beta_n(k_n)\) converges in probability to \(\beta\).
Moreover, because the series \(\sum_n {4}/{n^2}\) converges, Borell-Cantelli lemma implies
that \(\widehat{\beta}_n(k_n)\to\beta\) almost surely.

\subsection{Proof of Corollary \ref{ln_strong_consistency}}

If we take \(k_n=A\ln{n}\), we have \(e^{\beta k_n}=n^{A\beta}\) then
\begin{equation*}
    \lim_n{e^{k_n\beta}\frac{\ln{n}}{nL(e^{k_n})}} =\lim_n{\frac{\ln{n}}{n^{(1-A\beta)/2}}\frac{1}{n^{(1-A\beta)/2}L(n^A)}}=0.
\end{equation*}

For the last limit we have used that if \(L\) is slowly varying, then \(L(n^A)\) is
also slowly varying and that \(\lim_n n^\gamma L(n)\to +\infty\) for \(\gamma>0\)
\cite[Proposition 1.3.6.v, pp. 16]{Bingham1987}. Corollary
\ref{ln_strong_consistency} now follows by Theorem \ref{consistency}.

\subsection{Proof of Theorem \ref{asymptotic_normality} and Corollary \ref{asymptotic_normality_real_value}}

Before starting with the proof of the asymptotic normality of the
$\widehat{\beta}_n\left({k_n}\right)$ estimator, we need the following technical
lemmas.

\begin{lemma}\label{log_transform_weakly_conv}
    Let \(W_n\) be a sequence of positive random variables and
    \(a_n\) and \(b_n\) two positive sequences such that \(a_n>0\),
    \({b_n}/{a_n}\to 0\). If there exists a random variable \(W\)
    with continuous distribution function \(F\) such that ${({W_n} - {a_n})}/{b_n}$ converges in distribution to $W$,
    then ${a_n} {{{(\ln {W_n} - \ln {a_n})}}/{{{b_n}}}}$ also
    converges in distribution to $W$.
\end{lemma}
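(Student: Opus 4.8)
The plan is to reduce everything to the single normalized sequence $Z_n := (W_n - a_n)/b_n$. Writing $\epsilon_n := b_n/a_n$, the hypotheses give $\epsilon_n \to 0$ and $Z_n \Rightarrow W$. Since $W_n > 0$ and $a_n > 0$, I would factor $W_n = a_n\left(1 + \epsilon_n Z_n\right)$, which is legitimate because $a_n\left(1 + \epsilon_n Z_n\right) = W_n > 0$ forces $1 + \epsilon_n Z_n > 0$, so that
\begin{equation*}
    \frac{a_n\left(\ln W_n - \ln a_n\right)}{b_n} = \frac{1}{\epsilon_n}\ln\left(1 + \epsilon_n Z_n\right) =: g_n(Z_n).
\end{equation*}
The whole matter then comes down to proving $g_n(Z_n) \Rightarrow W$, and since $\ln(1+x)/x \to 1$ as $x\to 0$, the natural guess is that $g_n(Z_n)$ is asymptotically indistinguishable from $Z_n$ itself.

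To make this precise I would first invoke tightness: because $Z_n$ converges in distribution to the proper random variable $W$, the family $\{Z_n\}_{n\geq 1}$ is tight, i.e. for every $\gamma > 0$ there exists $M > 0$ with $\sup_n \mathbb{P}\left(|Z_n| > M\right) < \gamma$. Combined with $\epsilon_n \to 0$, this yields $\epsilon_n Z_n \to 0$ in probability, since $\mathbb{P}\left(|\epsilon_n Z_n| > t\right) \leq \mathbb{P}\left(|Z_n| > t/\epsilon_n\right)$ and $t/\epsilon_n \to +\infty$.

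Next I would control the remainder $R_n := g_n(Z_n) - Z_n$ via the elementary inequality $|\ln(1+x) - x| \leq x^2$, valid for $|x| \leq 1/2$. On the event $\{|\epsilon_n Z_n| \leq 1/2\}$ this gives
\begin{equation*}
    |R_n| = \frac{1}{\epsilon_n}\left|\ln\left(1 + \epsilon_n Z_n\right) - \epsilon_n Z_n\right| \leq \frac{1}{\epsilon_n}\left(\epsilon_n Z_n\right)^2 = \epsilon_n Z_n^2.
\end{equation*}
Fixing $\eta > 0$ and $M > 0$, I would then split according to whether $|Z_n| \leq M$: on $\{|Z_n| \leq M\}$ one has $\epsilon_n Z_n^2 \leq \epsilon_n M^2 \to 0$, so for large $n$ the event $\{|R_n| > \eta\}$ forces either $|Z_n| > M$ or $|\epsilon_n Z_n| > 1/2$, and both probabilities are made arbitrarily small by tightness together with $\epsilon_n \to 0$. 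Hence $R_n \to 0$ in probability. Finally, writing $g_n(Z_n) = Z_n + R_n$ with $Z_n \Rightarrow W$ and $R_n \to 0$ in probability, Slutsky's theorem delivers $g_n(Z_n) \Rightarrow W$, which is the assertion.

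The one delicate point is the tail control of the remainder: the pointwise approximation $\ln(1+x) \approx x$ deteriorates for large $x$, so the argument cannot be purely deterministic and must route the large-deviation regime of $Z_n$ through tightness. This is precisely where the hypothesis that $Z_n$ converges to a \emph{proper} limit is used; the continuity of $F$ plays no role in this step and is presumably needed only in the downstream application of the lemma.
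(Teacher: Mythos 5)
Your proof is correct, but it takes a genuinely different route from the paper's. The paper argues at the level of distribution functions: it fixes $x\in\mathbb{R}$, rewrites the event $\{W_n \leq a_n + b_n x\}$ through the monotone logarithm as $\{\ln W_n \leq \ln a_n + \ln(1 + (b_n/a_n)x)\}$, expands $\ln(1 + (b_n/a_n)x) = (b_n/a_n)x + o(b_n/a_n)$ \emph{deterministically} (the argument of the logarithm there is the fixed real $x$, not a random quantity), obtains $\mathbb{P}\bigl(a_n b_n^{-1}(\ln W_n - \ln a_n) \leq x + o(1)\bigr) \to F(x)$, and then uses the continuity of $F$ to absorb the $o(1)$ shift. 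You instead argue at the level of random variables: writing $T_n = Z_n + R_n$ with $Z_n = (W_n-a_n)/b_n$, you must control a remainder whose argument $\epsilon_n Z_n$ is random, which is exactly why your proof needs the tightness of $\{Z_n\}$ (a step with no counterpart in the paper) before the Taylor bound $|\ln(1+x)-x|\leq x^2$ can be applied on a high-probability event; Slutsky's theorem then finishes. The trade-off is real: the paper's argument is shorter and sidesteps tightness entirely, but it genuinely uses continuity of $F$ in its last step, whereas your version never does, so you in fact prove the lemma for an arbitrary proper limit $W$ — a slightly stronger statement, as you correctly observe in your closing remark.
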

\begin{proof}
    Let \(x\in\mathbb{R}\) be fixed. Because \({({W_n} - {a_n})}/{b_n}\Rightarrow  W\) in distribution,
    we have
    \begin{equation*}
        \P\left( {{W_n} \leqslant {a_n} + {b_n}x} \right) \to F\left( x \right).
    \end{equation*}

    Using that \({a_n} + {b_n}x = {a_n}\left( {1 + a_n^{-1}b_nx} \right)\) and taking
    logs we get
    \begin{equation*}
        \P\left( {\ln {W_n} \leqslant \ln {a_n} + \ln \left( {1 + \frac{{{b_n}}}{{{a_n}}}x} \right)} \right) \to F\left( x \right).
    \end{equation*}

    The condition \({b_n}/{a_n}\to 0\) implies that $\ln \left( {1 + a_n^{-1}b_nx}
        \right) = a_n^{-1}b_nx + o\left( a_n^{-1}b_n \right)$. Then, $\P\left(
        {{a_nb_n^{-1}}(\ln {W_n} - \ln {a_n}) \leqslant x + o\left( 1 \right)} \right)$
    converges to $F\left( x \right)$ and the Lemma follows by the continuity of
    \(F\).
\end{proof}

\begin{lemma}\label{convergence_probability_p_k}
    If \(k_n\) satisfies the hypothesis of Theorem \ref{consistency}, then,
    \[\frac{{\widehat p_{{k_n}}^n}}{{\bar F\left( {{e^{{k_n}}}} \right)}}\to 1\quad \textit{almost surely}.\]
\end{lemma}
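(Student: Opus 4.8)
The plan is to first identify $\bar F(e^{k_n})$ with $p_{k_n}$, then to recast the standing hypothesis as a concentration-friendly condition, apply Bernstein's inequality (Lemma \ref{bernstein}) along a deterministic sequence $\delta_n\to 0$, and finally upgrade the resulting in-probability bound to almost-sure convergence via the first Borel--Cantelli lemma. The starting observation is that $\bar F(e^{k_n})=\P(S>e^{k_n})=p_{k_n}=L(e^{k_n})e^{-k_n\beta}$, so the claim is simply that $\widehat p^{(n)}_{k_n}/p_{k_n}\to 1$ almost surely. Dividing the hypothesis $(\ln n)e^{k_n\beta}/n=o(L(e^{k_n}))$ through by $L(e^{k_n})$ turns it into the equivalent statement
\begin{equation*}
    \frac{\ln n}{n\,p_{k_n}}\longrightarrow 0,\qquad\text{equivalently}\qquad \frac{n\,p_{k_n}}{\ln n}\longrightarrow +\infty,
\end{equation*}
and this reformulation is what drives the entire argument.

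Next I would make the deterministic choice $\delta_n=2/n^2$, for which $u_n(\delta_n)=\ln(2/\delta_n)/n=2\ln n/n$. Since $n\,p_{k_n}/\ln n\to+\infty$, there is an integer $N$ such that $p_{k_n}\geq 8\ln n/n=4u_n(\delta_n)$ for all $n\geq N$, so the hypothesis of Lemma \ref{bernstein} is met for the Bernoulli variables $\I\{S_i>e^{k_n}\}$ with threshold $e^{k_n}$. That lemma then yields, with probability at least $1-\delta_n=1-2/n^2$,
\begin{equation*}
    \left|\widehat p^{(n)}_{k_n}-p_{k_n}\right|\leq 2\sqrt{p_{k_n}\,u_n(\delta_n)},
\end{equation*}
whence, dividing by $p_{k_n}>0$,
\begin{equation*}
    \left|\frac{\widehat p^{(n)}_{k_n}}{p_{k_n}}-1\right|\leq 2\sqrt{\frac{u_n(\delta_n)}{p_{k_n}}}=2\sqrt{\frac{2\ln n}{n\,p_{k_n}}}.
\end{equation*}

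Finally I would conclude with Borel--Cantelli. Fixing $\epsilon>0$, the right-hand side above tends to $0$ by the reformulated hypothesis, so there is some $N_\epsilon\geq N$ with $2\sqrt{2\ln n/(n\,p_{k_n})}\leq\epsilon$ for all $n\geq N_\epsilon$; for such $n$ the event $\{|\widehat p^{(n)}_{k_n}/p_{k_n}-1|>\epsilon\}$ is contained in the complement of the high-probability event above and therefore has probability at most $2/n^2$. Because $\sum_n 2/n^2<+\infty$, the first Borel--Cantelli lemma shows that this event occurs only finitely often $\P$-almost surely, and letting $\epsilon$ run through a sequence decreasing to $0$ gives $\widehat p^{(n)}_{k_n}/p_{k_n}\to 1$ almost surely. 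I do not expect a serious obstacle here: the argument essentially repeats the concentration-plus-Borel--Cantelli step already used in the proof of Theorem \ref{consistency}. The only delicate point is the bookkeeping at the outset, namely translating the hypothesis into $\ln n/(n\,p_{k_n})\to 0$ and verifying that the Bernstein admissibility condition $p_{k_n}\geq 4u_n(\delta_n)$ holds for all $n$ large enough.
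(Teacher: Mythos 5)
Your proposal is correct and follows essentially the same route as the paper's own proof: identify $\bar F(e^{k_n})$ with $p_{k_n}$, choose $\delta_n = 2/n^2$ so that $u_n(\delta_n)=2\ln n/n$, verify the admissibility condition $p_{k_n}\geq 4u_n(\delta_n)$ from the hypothesis of Theorem \ref{consistency}, apply Lemma \ref{bernstein}, and conclude by Borel--Cantelli. The only cosmetic difference is that the paper invokes the concentration bound directly in ratio form, whereas you divide the absolute-deviation bound by $p_{k_n}$ --- these are the same step.
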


\begin{proof}
    By Lemma \ref{bernstein}, for any \(\delta>0\) such that \(p_k\geq 4
    u_n\left(\delta\right)\) we have that,
    \begin{equation}\label{concentration_prob}
        \P\left( {\left| {\frac{{\widehat p_k^n}}{{{p_k}}} - 1} \right| \leqslant 2\sqrt {\frac{{{u_n}\left( \delta  \right)}}{{{p_k}}}} } \right) \geqslant 1 - \delta.
    \end{equation}

    As in the proof of Theorem \ref{consistency}, let \(\delta = {2}/{n^2}\), so
    \(u_n(\delta)={2\ln{n}}/{n}\). The condition \(e^{k_n\beta}{\ln{n}}/{n}=o\left(
    {L\left( {{e^{{k_n}}}} \right)} \right)\) implies that we can find \(N_1\in\N\)
    such that \({p_{{k_n}}} \geqslant {{8\ln n}}/{n}\) for all \(n\geq N_1\),
    therefore, by equation \eqref{concentration_prob}, we have, for all $n \geqslant
        {N_1}$
    \begin{equation*}
        \P\left( {\left| {\frac{{\widehat p_{{k_n}}^n}}{{{p_{{k_n}}}}} - 1} \right| \leqslant 2\sqrt {\frac{{2\ln n}}{{n{p_{{k_n}}}}}} } \right) \geqslant 1 - \frac{2}{{{n^2}}}.
    \end{equation*}

    Let \(\epsilon>0\). Notice that \({{\ln n}}{{(n{p_{{k_n}}})^{-1}}} =
    {{{e^{{k_n}\beta }}\ln n }}/{(nL\left( {{e^{{k_n}}}} \right))}\) and this goes to
    0 as $n$ goes to \(+\infty\), therefore, we can find \(N_2\) such that \(2\sqrt
    {{{2\ln n}}/{{(n{p_{{k_n}}})}}} \leqslant \epsilon\) for all \(n\geq N_2\), then,
    for all $n \geqslant \max \left( {{N_1},{N_2}} \right)$
    \begin{equation*}
        \P\left( {\left| {\frac{{\widehat p_{{k_n}}^n}}{{{p_{{k_n}}}}} - 1} \right| \leqslant \epsilon } \right) \geqslant 1 - \frac{2}{{{n^2}}},
    \end{equation*}
    and the result follows by Borel-Cantelli's Lemma.
\end{proof}

The next result can be obtained using the same arguments of Example 3.11 on
\cite{Drees2010}.

\begin{lemma}\label{dress_and_rotzen} Let \(W_n\) be a sequence of i.i.d. random variables with survival function \eqref{eq:Zipf}, \(\phi_1\)
    and \(\phi_2\) bounded functions and \(u_n\) an increasing sequence of real numbers such that \(u_n\to +\infty\).
    Define
    \begin{equation*}
        {W_{n,i}} = \frac{{{W_i}}}{{{u_n}}}\mathbb{I}\left\{ {\frac{{{W_i}}}{{{u_n}}} > 1} \right\}, \quad {v_n} = \P\left( {{W_{n,i}} \ne 0} \right) \quad \textnormal{and}
    \end{equation*}
    \begin{equation*}
        {\widetilde Z_n}\left( {{\phi _k}} \right) = \frac{1}{{\sqrt {n{v_n}} }}\sum\limits_{i = 1}^n {\left( {{\phi _k}\left( {{W_{n,i}}} \right) - \E{\phi _k}\left( {{W_{n,i}}} \right)} \right)}.
    \end{equation*}

    If the following conditions hold:
    \begin{itemize}
        \item[\AssumpAOneText]\label{assumpt:AssumpAOne} \(n v_n\to +\infty\),
        \item[\AssumpATwoText]\label{assumpt:AssumpATwo} \(\E{\left[ {{{\phi _k}\left( {{W_{n,1}}} \right)}^4 } \right]} = O\left( {{v_n}} \right),\quad, k = 1,2,\)
        \item[\AssumpAThreeText]\label{assumpt:AssumpAThree} \(\mathop {\lim }\limits_n \frac{1}{{{v_n}}}{{\E\left[ {{\phi _k}\left( {{W_{n,1}}} \right){\phi _l}\left( {{W_{n,1}}} \right)} \right]} }  = {\sigma _{kl}},\quad k,l \in \{1,2\},\)
    \end{itemize}
    then \({\left( {{{\widetilde Z}_n}\left( {{\phi _k}} \right)} \right)_{1 \leqslant k \leqslant 2}}\) converges weakly to a centred normal distribution
    with covariance matrix \({\left( {{\sigma _{kl}}} \right)_{1 \leqslant k,l \leqslant 2}}\).
\end{lemma}

Let \(k_n\) satisfy the conditions of Theorem \ref{consistency}, take
\(u_n=e^{k_n}\), \({\phi _1}\left( x \right) = \mathbb{I}\left\{ {x > 1} \right\}\)
and \({\phi _2}\left( x \right) = \mathbb{I}\left\{ {x > e} \right\}\). With this
notation:
\begin{align*}
    {\phi _1}\left( {{W_{n,i}}} \right)                & = \mathbb{I}\left\{ {\left( {\frac{{{W_i}}}{{{u_n}}}\mathbb{I}\left\{ {\frac{{{W_i}}}{{{u_n}}} > 1} \right\}} \right) > 1} \right\} = \mathbb{I}\left\{ {\frac{{{W_i}}}{{{u_n}}} > 1} \right\}, \\
    {\phi _2}\left( {{W_{n,i}}} \right)                & = \mathbb{I}\left\{ {\left( {\frac{{{W_i}}}{{{u_n}}}\mathbb{I}\left\{ {\frac{{{W_i}}}{{{u_n}}} > 1} \right\}} \right) > e} \right\} = \mathbb{I}\left\{ {\frac{{{W_i}}}{{{u_n}}} > e} \right\}, \\
    \E\left[{\phi _1}\left( {{W_{n,i}}} \right)\right] & = \P\left( {\frac{{{W_i}}}{{{u_n}}} > 1} \right) = \overline F \left( {{u_n}} \right),                                                                                                          \\
    \E\left[{\phi _2}\left( {{W_{n,i}}} \right)\right] & = \P\left( {\frac{{{W_i}}}{{{u_n}}} > e} \right) = \overline F \left( {e{u_n}} \right),                                                                                                         \\
    {v_n}                                              & = \P\left( {{W_{n,i}} \ne 0} \right) = \P\left( {\frac{{{W_i}}}{{{u_n}}} > 1} \right) = \overline F \left( {{u_n}} \right).
\end{align*}

Let \(w_n=\overline F \left( {e{u_n}} \right)\), \({\lambda _n} = {{\overline F
        \left( {{u_n}} \right)}}/{{\overline F \left( {e{u_n}} \right)}}={v_n}/{w_n}\)
(notice that \(\lambda_n\to e^\beta\)) and \({y_n} = \sqrt
{{{{v_n}}}/{{(n{w_n}^2)}}}\), then,
\begin{align}
    \widehat{\lambda}_n & =\frac{{\sum\limits_{i = 1}^n {\mathbb{I}\left\{ {{W_i} > u_n} \right\}} }}{{\sum\limits_{i = 1}^n {\mathbb{I}\left\{ {{W_i} > eu_n} \right\}} }} = \frac{{n\E\left[{\phi_1}\left( {{W_{n,1}}} \right)\right] + \sum\limits_{i = 1}^n {\left\{ {{\phi _1}\left( {{W_{n,i}}} \right) - \E\left[{\phi _1}\left( {{W_{n,1}}} \right)\right]} \right\}} }}{{n\E\left[{\phi _2}\left( {{W_{n,1}}} \right)\right] + \sum\limits_{i = 1}^n {\left\{ {{\phi _2}\left( {{W_{n,i}}} \right) - \E\left[{\phi _2}\left( {{W_{n,1}}} \right)\right]} \right\}} }}\nonumber                                                                                                                                                                                                                                                                                                  \\
                        & = \frac{{\frac{{\E\left[{\phi _1}\left( {{W_{n,1}}} \right)\right]}}{{\E\left[{\phi _2}\left( {{W_{n,1}}} \right)\right]}} + \frac{{\sum\limits_{i = 1}^n {\left\{ {{\phi _1}\left( {{W_{n,i}}} \right) - \E\left[{\phi _1}\left( {{W_{n,1}}} \right)\right]} \right\}} }}{{n\E\left[{\phi _2}\left( {{W_{n,1}}} \right)\right]}}}}{{1 + \frac{{\sum\limits_{i = 1}^n {\left\{ {{\phi _2}\left( {{W_{n,i}}} \right) - \E\left[{\phi _2}\left( {{W_{n,1}}} \right)\right]} \right\}} }}{{n\E\left[{\phi _2}\left( {{W_{n,1}}} \right)\right]}}}} = \frac{{\frac{{\E\left[{\phi _1}\left( {{W_{n,1}}} \right)\right]}}{{\E\left[{\phi _2}\left( {{W_{n,1}}} \right)\right]}} + {{\widetilde Z}_n}\left( {{\phi _1}} \right)\sqrt {\frac{{{v_n}}}{{n{w_n}^2}}} }}{{1 + {{\widetilde Z}_n}\left( {{\phi _2}} \right)\sqrt {\frac{{{v_n}}}{{n{w_n}^2}}} }}\nonumber \\
                        & = \frac{{{\lambda _n} + {{\widetilde Z}_n}\left( {{\phi _1}} \right){y_n}}}{{1 + {{\widetilde Z}_n}\left( {{\phi _2}} \right){y_n}}}. \label{lamda_n}
\end{align}

In order to apply Lemma \ref{dress_and_rotzen}, we need to check its hypotheses. We
will start by \AssumpAOne. By hypothesis,
\(e^{k_n\beta}{\ln{n}}/{n}=o\left(L\left(e^{k_n+1}\right)\right)\) and \(L\) is
slowly varying, hence, we can write \({e^{{k_n}\beta }}{{\ln n}}/{n} = L\left(
{{e^{{k_n}}}} \right)\varepsilon \left( n \right)\) where \(\varepsilon \left( n
\right)\to 0\), then
\begin{equation*}
    nv_n=n\overline F ( {{u_n}} )=n\frac{L(e^{k_n})}{e^{k_n\beta}}=\frac{\ln n}{\epsilon(n)}\to +\infty.
\end{equation*}

Hypotheses {\AssumpATwo} follows directly from the following calculations
\begin{align*}
    \E \left[ \phi_1(W_{n,1})^4  \right] & = \E \left[ \phi_1(W_{n,1})\right]=\P\left( W_i>u_n \right)=\overline{F}(u_n)=v_n,         \\
    \E \left[ \phi_2(W_{n,1})^4  \right] & = \E \left[ \phi_2(W_{n,1})\right]=\P\left( W_i>e u_n \right)=\overline{F}(e u_n)\leq v_n. \\
\end{align*}

Finally, for {\AssumpAThree}, observe that
\begin{align*}
    \E\left[ {{\phi _1}\left( {{W_{n,1}}} \right)^2} \right]                                  & = v_n, \\
    \E\left[ {{\phi _1}\left( {{W_{n,1}}} \right){\phi _2}\left( {{W_{n,1}}} \right)} \right] & = w_n, \\
    \E\left[ {{\phi _2}\left( {{W_{n,1}}}\right)^2} \right]                                   & = w_n,
\end{align*}
therefore, the limits stated in {\AssumpAThree} exist and their values are \(\sigma_{11}=1\), \(\sigma _{12}=\sigma _{22}=e^{-\beta}\).

Applying Lemma \ref{dress_and_rotzen} we obtain that \({( {\widetilde {{Z}}_n\left(
                {{\phi _k}} \right)} )_{1 \leqslant k \leqslant 2}}\) converges to a centred normal
distribution with covariance matrix \({\left( {{\sigma _{kl}}} \right)_{1 \leqslant
            k,l \leqslant 2}}\). Taking into account that \({y_n}\sim{{{e^\beta }}}/{{\sqrt
        {n{v_n}} }}\), it follows that
\begin{align}
    {\widehat \lambda _n} & = \left( {{\lambda _n} + {{\widetilde Z}_n}\left( {{\phi _1}} \right){y_n}} \right)\left( {1 - {{\widetilde Z}_n}\left( {{\phi _2}} \right){y_n} + o_P\left( {\frac{1}{{\sqrt {n{v_n}} }}} \right)} \right)\nonumber        \\
                          & = {\lambda _n} + {y_n}\left( {{{\widetilde Z}_n}\left( {{\phi _1}} \right) - {\lambda _n}{{\widetilde Z}_n}\left( {{\phi _2}} \right)} \right) + {o_P}\left( {\frac{1}{{\sqrt {n{v_n}} }}} \right).\label{eq:linearization}
\end{align}

Then, \(\sqrt {n{v_n}} ( {{{\widehat \lambda }_n} - {\lambda _n}} )\) converges
weakly to a centred normal distribution with variance \({e^{2\beta} }\left( {{\sigma
        _{11}} + {e^{2\beta }}{\sigma _{22}} - 2{e^\beta }{\sigma _{12}}} \right) =
    {e^{2\beta }}\left( {{e^\beta } - 1} \right)\). This can be resumed in the following
lemma.

\begin{lemma}\label{weakly_convergence_ratio} Let \(W_n\) and \(u_n\) be as in Lemma \ref{dress_and_rotzen},
    if \(k_n\) satisfies the conditions of Theorem \ref{consistency}, then the following convergence in distribution holds
    \begin{equation*}
        \sqrt {n\bar F\left( {{e^{{k_n}}}} \right)} \left( {\frac{{\sum\limits_{i = 1}^n {\mathbb{I}\left\{ {{W_i} > {e^{{k_n}}}} \right\}} }}{{\sum\limits_{i = 1}^n {\mathbb{I}\left\{ {{W_i} > {e^{{k_n} + 1}}} \right\}} }} - \frac{{\bar F\left( {{e^{{k_n}}}} \right)}}{{\bar F\left( {{e^{{k_n} + 1}}} \right)}}} \right) \Rightarrow \mathcal{N}\left(0,\; e^{2\beta}(e^{\beta}-1)\right).
    \end{equation*}
\end{lemma}

Lemmas \ref{limit_alpha}, \ref{log_transform_weakly_conv} and
\ref{weakly_convergence_ratio} combined with equation (\ref{beta_k_eq}) imply the
first part of Theorem \ref{asymptotic_normality}, the second part follows from Lemma
\ref{convergence_probability_p_k} and Slutsky's Theorem. Corollary
\ref{asymptotic_normality_real_value} follows immediately.

\subsection{Technical details of Example \ref{ex:matsui}}\label{sec:details_example_matsui}

To establish equation \eqref{eq:svf_bound_matsui}, we proceed as follows. For any
\(x>1\), we can express:
\begin{equation*}
    \frac{L(x)}{L(ex)}=\left( \frac{10^l x}{ \lfloor 10^lx \rfloor + 1 } \frac{ \lfloor 10^l ex \rfloor + 1 }{10^l ex}  \right)^\beta = e^{-\beta}\left( \frac{ \lfloor 10^l ex \rfloor + 1 }{ \lfloor 10^lx \rfloor + 1 } \right)^\beta.
\end{equation*}

Using that \(y-1 < \lfloor y \rfloor \leq y\) for any \(y>0\), we can derive:
\begin{equation*}
    \frac{ \lfloor 10^l ex \rfloor + 1 }{ \lfloor 10^lx \rfloor + 1 }\leq \frac{10^l ex + 1}{10^lx}\leq e + \frac{1}{10^l x},
\end{equation*}
consequently,
\begin{equation*}
    \frac{L(x)}{L(ex)}\leq \left(1+\frac{1}{10^lex}\right)^\beta = 1+\frac{\beta}{10^le x} + o(x^{-1}).
\end{equation*}
By a similar argument, if follows that:
\begin{equation*}
    \frac{L(x)}{L(ex)}\geq \left(1-\frac{1}{10^lx+1}\right)^\beta = 1-\frac{\beta}{10^lx+1} + o(x^{-1}).
\end{equation*}
As a result, for sufficiently large \(x\), we obtain that
\begin{equation*}
    \left| \frac{L(x)}{L(ex)}-1 \right|\leq \frac{2\beta}{10^l}x^{-1}.
\end{equation*}

\subsection{Proof of Lemma \ref{bias_sr2}}

The representation is a direct application of Lemma \ref{ln_ratio} and the fact that
\(g\) is a regularly varying function of index \(\rho\).

\begin{lemma}\label{ln_ratio} Assume that \(L\) satisfies \(SR2\), has
    positive decrease and \(x\) is big enough such that representation the (\ref{sr2_repre}) holds, then
    \begin{equation}
        \ln\left(\frac{L(x)}{L(\lambda x)}\right) = -c|\rho|^{-1}\left( g(x)-g(\lambda x) \right) + o\left(g(x)\right).
    \end{equation}
\end{lemma}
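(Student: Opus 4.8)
The plan is to read off the asymptotic expansion of $L$ near infinity directly from representation \eqref{sr2_repre} and then compute the logarithm of the ratio as a difference of two logarithms, each of which is linearized by a first-order Taylor expansion. First I would record that \eqref{sr2_repre} gives, as $x\to+\infty$,
\[
L(x)=C\left(1-c|\rho|^{-1}g(x)+o(g(x))\right),
\qquad
L(\lambda x)=C\left(1-c|\rho|^{-1}g(\lambda x)+o(g(\lambda x))\right),
\]
so that the constant $C$ cancels in the ratio $L(x)/L(\lambda x)$ and
\[
\ln\left(\frac{L(x)}{L(\lambda x)}\right)
=\ln\left(1-c|\rho|^{-1}g(x)+o(g(x))\right)
-\ln\left(1-c|\rho|^{-1}g(\lambda x)+o(g(\lambda x))\right).
\]

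The next step is to justify that both error terms are $o(g(x))$, which is where the hypotheses on $g$ enter. Since $g$ is regularly varying of index $\rho$, we have $g(\lambda x)\sim \lambda^{\rho}g(x)$ as $x\to+\infty$; in particular $g(\lambda x)=O(g(x))$, whence $o(g(\lambda x))=o(g(x))$. The positive-decrease assumption guarantees that $g(x)\to 0$ as $x\to+\infty$ (the upper Matuszewska index of $g$ being strictly negative), so that the arguments of both logarithms tend to $1$ and the expansion $\ln(1+u)=u+o(u)$ for $u\to 0$ is legitimate. Applying it to each term yields
\[
\ln\left(1-c|\rho|^{-1}g(x)+o(g(x))\right)=-c|\rho|^{-1}g(x)+o(g(x)),
\]
and similarly for the term in $g(\lambda x)$, with its remainder also absorbed into $o(g(x))$ by the previous remark.

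Finally I would subtract the two expansions to obtain
\[
\ln\left(\frac{L(x)}{L(\lambda x)}\right)
=-c|\rho|^{-1}g(x)+c|\rho|^{-1}g(\lambda x)+o(g(x))
=-c|\rho|^{-1}\left(g(x)-g(\lambda x)\right)+o(g(x)),
\]
which is exactly the claimed identity. The calculation is essentially routine once the representation is in hand; the only genuine point requiring care — and hence the main obstacle — is the bookkeeping of the error terms, specifically verifying that $o(g(\lambda x))$ may be rewritten as $o(g(x))$ and that the logarithmic linearization is valid, both of which rest on the regular variation of $g$ together with the positive-decrease condition forcing $g(x)\to 0$. The statement of Lemma \ref{bias_sr2} then follows by specializing to $\lambda=e$ and $x=n^{A}$ and inserting $g(x)=x^{\rho}U(x)$ with $|\rho|=-\rho$.
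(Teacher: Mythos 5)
Your proposal is correct and follows essentially the same route as the paper's proof: both write $\ln(L(x)/L(\lambda x))$ as a difference of logarithms via the representation \eqref{sr2_repre} (with $C$ cancelling), apply the first-order expansion $\ln(1+u)=u+o(u)$ to each term, and use the regular variation of $g$ to rewrite $o(g(\lambda x))$ as $o(g(x))$ before subtracting. If anything, you are slightly more careful than the paper, since you explicitly invoke positive decrease to guarantee $g(x)\to 0$ and hence legitimize the logarithmic linearization, a point the paper leaves implicit.
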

\begin{proof}
    Denote \(A(x) = c\rho^{-1}g(x)+o\left(g \left(x\right) \right)\). By
    \eqref{sr2_repre} we have
    \begin{align}
        \ln\left(\frac{L(x)}{L(\lambda x)}\right) & = \ln\left( \frac{C(1+A(x))}{C(1+A(\lambda x))} \right) = \ln\left( \frac{1+A(x)}{1+A(\lambda x)} \right)\nonumber \\
                                                  & = \ln{\left(1+A\left(x\right)\right)}-\ln(1+A(\lambda x)).\label{bias_expansion_in_terms_a}
    \end{align}

    Using the first order expansion for \(\ln(1+A(x))\) we have that
    \begin{align}
        \ln\left(1+A\left(x\right)\right) & =c\rho^{-1}g(x)+o\left(g\left(x\right)\right)+\underbrace{o\left(c\rho^{-1}g(x)+o\left(g\left(x\right)\right)\right)}_{o\left(g\left(x\right)\right)}\nonumber \\
                                          & =c\rho^{-1}g(x)+o\left(g\left(x\right)\right).\label{ln_1_plus_a_expansion}
    \end{align}

    Applying (\ref{ln_1_plus_a_expansion}) to \(\lambda x\) we get
    \begin{equation}
        \ln\left(1+A\left(\lambda x\right)\right) = c\rho^{-1}g(\lambda x)+o\left(g\left(x\right)\right),\label{ln_1_plus_a_expansion_lambda}
    \end{equation}
    where we have used that if \(g\) is regularly varying then
    \(o\left(g\left(\lambda x\right)\right)=o\left(g\left(x\right)\right)\). The
    result now follows by plugging (\ref{ln_1_plus_a_expansion}) and
    (\ref{ln_1_plus_a_expansion_lambda}) into (\ref{bias_expansion_in_terms_a}).
\end{proof}

\subsection{Proof of Theorem \ref{th:rate_convergence}}

Theorem \ref{th:rate_convergence} is the the result of the Continuous Mapping Theorem
and the following Theorem, which is the particularization of Theorem 2.3 in
\cite{Chen1999} to the indicator function \(\I_B\). In section \ref{sec:funclimit},
we provide a functional generalization of Theorem \ref{thm:weaklimit}.

\begin{theorem}[Limit distributions] \label{thm:weaklimit} Let $\beta\in[0,1)$
    and $\nu$ be any probability distribution on $E$. Suppose that the chain $X$ is
    $\beta$-regular and let $B$ be a Harris set with finite and strictly
    positive $\mu$-measure. We have the following convergences
    in $\mathbb{P}_{\nu}$-distribution.
    \begin{itemize}
        \item[(i)] If $\beta=0$, we then have, as $n\rightarrow \infty$,
              \[\frac{1}{L_{\nu, D}(n)}\Sigma_n(B)\Rightarrow \mathcal{E}(1/\mu(B))
                  \text{ in } \mathbb{P}_{\nu}\text{-distribution},\]
              where $\mathcal{E}(\lambda)$ denotes the exponential distribution with
              mean $1/\lambda>0$.
        \item[(ii)] If $\beta \in (0,1)$, we have, as $n\rightarrow \infty$,
              \[\frac{1}{n^{\beta}L_{\nu, D}(n)}\Sigma_n(B)\Rightarrow \mu(B)/(Z_{\beta})^{\beta}
                  \text{ in } \mathbb{P}_{\nu}\text{-distribution},\]
              where $Z_{\beta}$ is a stable random variable with Laplace transform
              \[\psi_{\beta}\left ( t \right )=\exp\left (-t^{\beta}/\Gamma( \beta+1) \right ),\quad t\geq 0.\]
    \end{itemize}
\end{theorem}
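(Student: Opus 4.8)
The plan is to exploit the regenerative decomposition of the sample path and reduce the statement to a renewal-theoretic (Darling--Kac type) limit theorem for heavy-tailed inter-arrival times. First I would reduce to the atomic case: when $X$ has no accessible atom, the Nummelin splitting recalled above produces a $\beta$-regular extension $X^{\mathcal{M}}$ with atom $A_\smallSet=\smallSet\times\{1\}$ (Proposition \ref{prop:split_chain_beta_regular}), and since the first-coordinate marginal of the split chain has the law of $X$, the law of $\Sigma_n(B)$ for $B\subset E$ is unchanged. It thus suffices to argue on an atomic chain with atom $A$.

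Writing $\tau_A(j)$ for the successive return times to $A$ and $N_n$ for the number of regenerations before time $n$, the strong Markov property decomposes the occupation measure along the i.i.d. blocks \eqref{eq:traj}:
\[
    \Sigma_n(B)=\sum_{j=1}^{N_n} f(\mathcal{B}_j) + R_n,
\]
where $f(\mathcal{B}_j)=\sum_{i=\tau_A(j)+1}^{\tau_A(j+1)}\I\{X_i\in B\}$ counts the visits to $B$ during the $j$-th cycle and $R_n$ gathers the initial (pre-first-regeneration) block together with the single incomplete block straddling $n$. The variables $f(\mathcal{B}_j)$, $j\geq 1$, are i.i.d. with $\E_A[f(\mathcal{B}_1)]=\mu(B)\in(0,\infty)$, by the atomic representation of the invariant measure recalled in \ref{subsec:back} and the hypothesis on $B$. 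Since $N_n\to+\infty$ $\P_\nu$-almost surely under Harris recurrence, the strong law of large numbers with random index gives $N_n^{-1}\sum_{j=1}^{N_n} f(\mathcal{B}_j)\to\mu(B)$ a.s.; the initial block is $O(1)$ and the incomplete block contributes at most $\max_{j\leq N_n} f(\mathcal{B}_j)=o(N_n)$, so $R_n=o(N_n)$ is negligible after normalization. Hence the limit law of the normalized $\Sigma_n(B)$ coincides, via Slutsky's lemma, with that of $\mu(B)\,N_n$, and the whole problem is transferred to the asymptotics of the counting variable $N_n$.

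The core of the argument is the limit law of $N_n$. By Proposition \ref{prop:atomic}, $\beta$-regularity is equivalent to $\P_A(\tau_A>n)=L_A(n)\,n^{-\beta}$, so for $\beta\in(0,1)$ the inter-regeneration times $S_j=\tau_A(j+1)-\tau_A(j)$ lie in the domain of attraction of a positive $\beta$-stable law: there is a sequence $a_k=k^{1/\beta}\ell(k)$, with $\ell$ slowly varying and determined by $L_A$, such that $a_k^{-1}\sum_{j\leq k}S_j\Rightarrow Z_\beta$. Inverting the duality $\{N_n\geq k\}=\{\sum_{j\leq k}S_j\leq n\}$ and using the self-similarity of the stable limit then yields $N_n/b_n\Rightarrow Z_\beta^{-\beta}$ for a normalization $b_n$ regularly varying of index $\beta$ with $b_n\sim n^{\beta}/L_A(n)$; combined with the previous step this gives the announced limit $\mu(B)/Z_\beta^{\beta}$ once $b_n$ is re-expressed through $L_{\nu,D}$. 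For the boundary case $\beta=0$ the tail $\P_A(\tau_A>n)=L_A(n)$ is merely slowly varying, the partial sums admit no stable limit, and one invokes instead the $\beta=0$ Darling--Kac theorem (equivalently, the fact that the Mittag--Leffler law of order $0$ is the standard exponential), which gives $L_{\nu,D}(n)^{-1}\Sigma_n(B)\Rightarrow\mathcal{E}(1/\mu(B))$.

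The main obstacle I anticipate is this last step: establishing the stable/Mittag--Leffler limit for $N_n$ with the correct slowly varying normalization and, above all, reconciling the function $L_A$ governing the return-time tail with the function $L_{\nu,D}$ appearing in the truncated Green function of the statement. The passage between the two rests on the Tauberian correspondence between the tail of $\tau_A$ and the growth of $G_{\nu,D}$, and care is required to check that the resulting normalization is genuinely independent of the pair $(\nu,D)$, as guaranteed by Theorem~7.3 in \cite{Nummelin1984}. The $\beta=0$ regime, which falls outside the stable-domain machinery, is the other delicate point and must be treated on its own.
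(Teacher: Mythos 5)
The first thing to note is that the paper does not prove this statement at all: Theorem~\ref{thm:weaklimit} is quoted as a known result, with an explicit pointer to Theorem~2.3 of \cite{Chen1999} (and, for $\beta\in(0,1)$, it is equivalent to the Mittag--Leffler limit of \cite{Tjostheim-2001} via the identity $1/Z_\beta^\beta\,{\buildrel d \over =}\,\Gamma(1+\beta)M_\beta(1)$ recorded in the Remark following Theorem~\ref{thm:funclimit}). So there is no internal proof to compare against; the relevant comparison is with the standard proof in the cited literature, and your sketch is a faithful reconstruction of it: Nummelin splitting to reduce to an atomic chain, the regenerative decomposition $\Sigma_n(B)=\sum_{j\le N_n}f(\mathcal{B}_j)+R_n$, the strong law along blocks to replace $\Sigma_n(B)$ by $\mu(B)N_n(1+o(1))$, and the Dynkin--Lamperti duality $\{N_n\ge k\}=\{\sum_{j\le k}S_j\le n\}$ to transfer the problem to the stable limit for the return times. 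Each of these steps is sound; in particular your bound on the straddling block is fixable as stated, since $f(\mathcal{B}_{N_n+1})\le\max_{j\le N_n+1}f(\mathcal{B}_j)=o(N_n)$ almost surely for i.i.d.\ blocks with finite mean.

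Two caveats, which you yourself flag, are where the actual work lies, and they keep your text at the level of a proof outline rather than a proof. First, the constants: the theorem pins down the law of $Z_\beta$ through the Laplace transform $\exp(-t^\beta/\Gamma(1+\beta))$, so it is not enough to know $b_n\sim n^{\beta}/L_A(n)$ up to a constant factor; one must establish the Tauberian relation $L_{\nu,D}(n)\sim 1/\left(\Gamma(1+\beta)\Gamma(1-\beta)L_A(n)\right)$ and check that the resulting factor is exactly absorbed by that normalization of $Z_\beta$. That this bookkeeping closes is confirmed inside the paper itself, by the proof of Theorem~\ref{thm:funclimit} (where $G_{\nu,D}(n)\sim n^{\beta}L_s(n)/\Gamma(1+\beta)$ with $L_s=1/(\Gamma(1-\beta)L)$) combined with the Remark identifying $1/Z_\beta^{\beta}$ with $\Gamma(1+\beta)M_\beta(1)$; without this verification the limit in your argument could silently be off by a multiplicative constant. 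Second, case (i): for $\beta=0$ there is no stable limit, and ``invoke the $\beta=0$ Darling--Kac theorem'' is a citation, not an argument, so your treatment of part (i) has the same epistemic status as the paper's treatment of the entire theorem, namely an appeal to prior literature. In summary: correct strategy, essentially the one underlying the cited proof of \cite{Chen1999}, with the two genuinely delicate points correctly identified but not executed.
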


\subsection{Proof of Corollary \ref{cor:markov:deterministic_control}}
The random variable \(\frac{M_{\beta}(1)}{\Gamma(1-\beta)} \) is continuous and positive, therefore,
we can find positive constants \(R_\delta\) and \(H_\delta\) such that
\begin{equation}\label{eq:markov:prob_mittag_leffler}
    \P\left( {\frac{M_{\beta}(1)}{\Gamma(1-\beta)} \in \left[ {R_\eta ,H_\eta} \right]} \right) \geqslant 1 - \frac{\delta }{2}.
\end{equation}
By Proposition \ref{prop:markov:limit_dist}, \(N_n\overline{F}(n)\) converges in distribution
to \(\frac{M_{\beta}(1)}{\Gamma(1-\beta)}\), hence, there exists \(n_\delta\) such that
\begin{equation}\label{eq:markov:convergence_mittag_leffler}
    \left|\P \left( N_n\overline{F}(n) \in \left[ {R_\eta ,H_\eta} \right]\right) - \P \left( {\frac{M_{\beta}(1)}{\Gamma(1-\beta)} \in \left[ {R_\eta ,H_\eta} \right]} \right)\right|<\frac{\delta}{2}\quad\forall n\geq n_\delta.
\end{equation}
The result now follows by combining \eqref{eq:markov:prob_mittag_leffler} and \eqref{eq:markov:convergence_mittag_leffler}.
\subsection{Proof of Proposition \ref{prop:markov:bias_variance_thm}}

As in Proposition \ref{bias_variance_thm}, the result is a direct consequence of
the Lemma \ref{lemma:markov:finite_bound} and the inequality
\begin{equation*}
    \left|\widehat{\beta}_{N_n}(k)-\beta\right| \leq
    \left|\widehat{\beta}_{N_n}(k)-\beta(k)\right| + \left| \ln\left(\frac{L(e^{k})}{L(e^{k+1})}\right) \right|.
\end{equation*}

\begin{lemma}\label{lemma:markov:bernstein}If \(0<t<p_k\) then,
    \begin{equation}
        \P \left(\left| \widehat{p}^{(n)}_k-p_k \right|>t \right)\leq 2 \exp{\left(-\frac{nt^2}{4p_k}\right)}
    \end{equation}
\end{lemma}
\begin{proof}
    By Bernstein's inequality \cite[Lemma 19.32]{Vaart2000}, for all \(y>0\)
    \begin{align*}
        \P \left(\sqrt{n} \left| \widehat{p}^{(n)}_k-p_k \right|>y \right) & \leq 2 \exp{\left(-\frac{1}{4}\frac{y^2}{p_k+t/\sqrt{n}}\right)}                                                      \\
                                                                           & \leq  2 \min \left( \exp\left(-\frac{1}{4}\frac{y^2}{p_k}\right),\exp\left(-\frac{1}{4}\sqrt{n}{p_k}\right)  \right),
    \end{align*}
    and if \(y<\sqrt{n}p_k\), then
    \begin{equation*}
        \P \left(\sqrt{n} \left| \widehat{p}^{(n)}_k-p_k \right|>y \right)\leq 2  \exp\left(-\frac{1}{4}\frac{y^2}{p_k}\right).
    \end{equation*}
    The result now follows by setting \(y=t\sqrt{n}\).
\end{proof}

\begin{lemma}\label{lemma:markov:finite_bound}
    Let \(\delta>0\) and \(n\geq n_\delta\) such that \(p_{k+1}\geq {({40H_\delta}R^{-1}_\delta)}^2 v_n(\delta)\),
    then
    \begin{equation}\label{eq:markov:varianceBoundEq}
        \P_\nu \left( \left|\widehat{\beta}_{N_n}(k)-\beta(k)\right| \leq \frac{60 H_\delta}{R_{\delta}}\sqrt{\frac{v_n(\delta)}{p_{k+1}}} \right)>1-2\delta,
    \end{equation}
    where \(H_\delta\), \(R_\delta\) and \(n_\delta\) are as in Corollary \ref{cor:markov:deterministic_control}.
\end{lemma}
\begin{proof}
    Let \(\delta>0\) be fixed. With \(R_\delta\) and \(H_\delta\) as in Corollary \ref{cor:markov:deterministic_control}, define the events
    \begin{equation*}
        \mathcal{E}_n = \left\{ N_n \in \left[\frac{R_\delta}{\overline{F}(n)}, \frac{H_\delta}{\overline{F}(n)}\right] \right\},
    \end{equation*}
    and denote by \(\overline{\mathcal{E}}_n\) the complement of \(\mathcal{E}_n\).
    By Corollary \ref{cor:markov:deterministic_control}, there exists \(n_\delta\) such that
    \(\P(\overline{\mathcal{E}}_n)<\delta/2\). Let \(t>0\), then, for each \(n\geq n_\delta\) it
    holds that
    \begin{align*}
        \P\left( \left| \widehat{p}^{(N_n)}_k - p_k \right|>t \right) & \leqslant \P\left(\left\{\left| \widehat{p}^{(N_n)}_k - p_k \right|>t\right\}\cap \mathcal{E}_n\right) + \P\left(\overline{\mathcal{E}}_n\right), \\
                                                                      & \leqslant \P\left(\left\{\left| \widehat{p}^{(N_n)}_k - p_k \right|>t\right\}\cap \mathcal{E}_n\right) + \frac{\delta}{2}.
    \end{align*}
    Notice that, on \(\mathcal{E}_n\) we have the following inclusions
    \begin{align*}
        \left\{ \left| \widehat{p}^{(N_n)}_k - p_k \right|>t \right\} & = \left\{ \frac{1}{N_n} \left| \sum_{i=1}^{N_n}{\I\left\{ S_i>e^k \right\}} - p_k \right|>t \right\}                                     \nonumber                                                                \\
                                                                      & = \left\{ \left| \sum_{i=1}^{N_n}{\I\left\{ S_i>e^k \right\}} - p_k \right|>N_n t \right\}                                               \nonumber                                                                \\
                                                                      & \subseteq \left\{ \max_{1\leq m\leq \frac{H_\delta}{\overline{F}(n)}} \left| \sum_{i=1}^{m}{\I\left\{ S_i>e^k \right\}} - p_k \right| > \frac{R_\delta t}{\overline{F}(n)} \right\},\label{eq:markov:inclussions}
    \end{align*}
    therefore,
    \begin{equation*}
        \P\left(\left\{\left| \widehat{p}^{(N_n)}_k - p_k \right|>t\right\}\cap \mathcal{E}_n\right) \leqslant \P \left( \max_{1\leq m\leq \frac{H_\delta}{\overline{F}(n)}} \left| \sum_{i=1}^{m}{\I\left\{ S_i>e^k \right\}} - p_k \right| > \frac{R_\delta t}{\overline{F}(n)} \right).
    \end{equation*}
    By Montgomery-Smith's inequality \cite[Corollary 4]{montgomery1993comparison}, the right-hand side of the previous
    inequality is smaller than
    \begin{align*}
        9\P\left( \left| \sum_{i=1}^{\frac{H_{\delta}}{\overline{F}(n)}}{\I\left\{ S_i>e^k \right\}} - p_k \right| > \frac{R_\delta t}{10\overline{F}(n)} \right) = 9\P\left( \left| \widehat{p}^{ \left(\frac{H_{\delta}}{\overline{F}(n)}\right) }_k-p_k \right| > \frac{R_{\delta}t}{10H_{\delta}} \right)
    \end{align*}
    Using Lemma \ref{lemma:markov:bernstein} we obtain that, for \(t<\frac{10p_k H_\delta }{R_\delta}\) it holds that
    \begin{align*}
        \P\left( \left| \widehat{p}^{\left(\frac{H_{\delta}}{\overline{F}(n)}\right)}_k-p_k \right| > \frac{R_{\delta}t}{10H_{\delta}} \right) \leq 2 \exp\left(-\frac{R^2_{\delta}}{400H_{\delta}}\frac{t^2}{\overline{F}(n)p_k}\right).
    \end{align*}
    Therefore for all \(t<\frac{10p_k H_\delta }{R_\delta}\) we have that
    \begin{equation}\label{eq:markov:bound_t}
        \P\left( \left| \widehat{p}^{(N_n)}_k - p_k \right|>t \right)\leq 18\exp\left(-\frac{R^2_{\delta}}{400H_{\delta}}\frac{t^2}{\overline{F}(n)p_k}\right)+\frac{\delta}{2}
    \end{equation}
    Taking \(t=20R^{-1}_{\delta}\sqrt{\ln{(36/\delta)} \overline{F}(n)p_k H_\delta }=20 H_\delta R^{-1}_\delta\sqrt{v_n(\delta)p_k}\) in \eqref{eq:markov:bound_t}
    implies that, if \(n\) is large enough such that \(p_k>4v_n(\delta)\)
    and \(n\geq n_\delta\), then
    \begin{equation}\label{eq:markov:bound_t}
        \P\left( \left| \widehat{p}^{(N_n)}_k - p_k \right|> 20\frac{H_\delta}{R_\delta}\sqrt{v_n p_k} \right)\leq \delta.
    \end{equation}
    Equation \eqref{eq:markov:varianceBoundEq} now follows by the same argument used in the proof
    of Proposition \ref{bias_variance_thm}.
\end{proof}

\subsection{Proof of Theorem \ref{th:consistency_markov}}

The recurrence of the chain implies that \(N_n\) converges almost surely to
\(+\infty\), then, by Theorem 8.1 in page 302 of \cite{Gut2013}, we can replace \(n\)
by \(N_n\) on the strong consistency results we presented on section
\ref{sec:tail-index-estimation}, to obtain equivalent results for the sequence
\(S_1\ldots,S_{N_n}\).

\section{Averaged Estimators}\label{subsec:ave}

Here we collect some remarks and results related to the averaged estimator $\widehat
    \beta_n \left( {k,m} \right)$. First, we detail how to get the expression
\eqref{bias_average_eq} from \eqref{beta_k_eq}. Let \(k>0\) be fixed, for each $j$ we
have:
\begin{equation*}
    \beta \left( {k + j} \right) = \beta  + \ln \left( {\frac{{L\left( {{e^{k + j}}} \right)}}{{L\left( {{e^{k + j + 1}}} \right)}}} \right),
\end{equation*}
then,
\begin{align*}
    \frac{1}{{2m + 1}}\sum\limits_{j =  - m}^m {\beta \left( {k + j} \right)} & = \frac{1}{{2m + 1}}\sum\limits_{j =  - m}^m \beta   + \frac{1}{{2m + 1}}\sum\limits_{j =  - m}^m {\ln \left( {\frac{{L\left( {{e^{k + j}}} \right)}}{{L\left( {{e^{k + j + 1}}} \right)}}} \right)} \\                                             & = \beta+\frac{1}{{2m + 1}}\ln \left( {\prod\limits_{j =  - m}^m {\frac{{L\left( {{e^{k + j}}} \right)}}{{L\left( {{e^{k + j + 1}}} \right)}}} } \right)                                              \\
                                                                              & = \beta + \frac{1}{{2m + 1}}\ln \left( {\frac{{L\left( {{e^{k - m}}} \right)}}{{L\left( {{e^{k + m + 1}}} \right)}}} \right).
\end{align*}

Our first result in this regard is the concentration inequality equivalent to
\eqref{rateConvergence} but for the averaged version of the estimator.

\begin{proposition}\label{bias_variance_thm_average}
    Let \(k\) and \(m\) such that \(k>m\) and let \(\delta\in (0,\; 1/(2(1+2m))\).
    Then, as soon as \(p_{k+m+1}\geq 16 u_n(\delta)\), we have with probability
    larger than \(1-2\delta(1+2m)\):
    \begin{equation}\label{rateConvergenceAverage}
        \left| {\widehat \beta_n \left( {k,m} \right) - \beta } \right| \leqslant 6\sqrt {\frac{{{u_n}\left( \delta  \right)}}{{{p_{k + m + 1}}}}}  + \frac{1}{{2m + 1}}\left| {\ln \left( {\frac{{L\left( {{e^{k - m}}} \right)}}{{L\left( {{e^{k + m + 1}}} \right)}}} \right)} \right|.
    \end{equation}
\end{proposition}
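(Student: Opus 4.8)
The plan is to mirror the proof of Proposition~\ref{bias_variance_thm}, replacing the single estimator by its averaged counterpart and paying for the $2m+1$ summands with a union bound. First I would split the deviation by the triangle inequality into a stochastic part and a deterministic (bias) part:
\begin{equation*}
    \left| \widehat\beta_n(k,m) - \beta \right| \leq \left| \widehat\beta_n(k,m) - \beta(k,m) \right| + \left| \beta(k,m) - \beta \right|.
\end{equation*}
The second term is handled for free: by the closed form \eqref{bias_average_eq} it equals exactly $\frac{1}{2m+1}\left| \ln\left( L(e^{k-m})/L(e^{k+m+1}) \right) \right|$, which is precisely the bias term on the right-hand side of \eqref{rateConvergenceAverage}. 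Thus the entire work reduces to controlling the stochastic term.

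For the stochastic term I would exploit the linearity of the averaged estimator and apply the triangle inequality once more,
\begin{equation*}
    \left| \widehat\beta_n(k,m) - \beta(k,m) \right| \leq \frac{1}{2m+1}\sum_{j=-m}^m \left| \widehat\beta_n(k+j) - \beta(k+j) \right|,
\end{equation*}
and then invoke Lemma~\ref{varianceBoundLemma} for each of the $2m+1$ single-index estimators $\widehat\beta_n(k+j)$. The observation that makes a \emph{single} hypothesis $p_{k+m+1}\geq 16\,u_n(\delta)$ sufficient is the monotonicity of the survivor function: since $p_l=\P(S>e^l)$ is nonincreasing in $l$, one has $p_{k+j+1}\geq p_{k+m+1}\geq 16\,u_n(\delta)$ for every $j\in\{-m,\dots,m\}$, so Lemma~\ref{varianceBoundLemma} applies to each summand. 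The same monotonicity gives $1/p_{k+j+1}\leq 1/p_{k+m+1}$, so each term is bounded by $6\sqrt{u_n(\delta)/p_{k+m+1}}$, and averaging $2m+1$ identical bounds leaves $6\sqrt{u_n(\delta)/p_{k+m+1}}$ unchanged.

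The only genuinely delicate point is the probability bookkeeping, which is where I expect the main (if modest) obstacle to lie. Lemma~\ref{varianceBoundLemma} guarantees each single-index bound on an event of probability at least $1-2\delta$; intersecting the $2m+1$ such events and applying the union bound yields a failure probability of at most $2\delta(2m+1)=2\delta(1+2m)$, whence the claimed confidence level $1-2\delta(1+2m)$. This accounting is exactly what forces the restriction $\delta\in(0,\,1/(2(1+2m)))$, ensuring that $1-2\delta(1+2m)$ is a meaningful, strictly positive probability and recovering the $\delta\in(0,1/2)$ requirement of the non-averaged case at $m=0$. I would close by combining the two displayed bounds on the intersection event; this last step is routine.
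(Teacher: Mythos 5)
Your proposal is correct and follows essentially the same route as the paper: the same triangle-inequality decomposition into stochastic and bias parts, the identity \eqref{bias_average_eq} for the bias, term-by-term application of Lemma~\ref{varianceBoundLemma} justified by the monotonicity of $l\mapsto p_l$, and a union bound giving confidence $1-2\delta(1+2m)$ (the paper packages the stochastic part as Lemma~\ref{varianceBoundLemma_average} and phrases the union bound in Bonferroni form, $(2m+1)(1-2\delta)-2m$, but this is the same accounting). Your probability bookkeeping in fact fixes a typo in the paper's lemma statement, which misprints the confidence level as $1-2\delta(1-2m)$.
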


The following results show that, for well-chosen \(k_n\) and \(m_n\), the estimator
\(\widehat{\beta}_n(k_n,m_n)\) is strongly consistent.

\begin{theorem}[Strong consistency]\label{consistency_average}
    Let \(k_n\) and \(m_n\) such that, as $n\to\infty$
    \begin{itemize}
        \item[i)] \({k_n} - {m_n} \to + \infty\),
        \item[ii)] \(\sum\limits_{n = 1}^{ + \infty } {\frac{4}{{{n^2}}}\left( {1 -
                  2{m_n}} \right)}\) is convergent,
        \item[iii)] \({e^{\left( {{k_n} + {m_n}} \right)\beta }}\frac{{\ln n}}{n} =
              o\left( {L\left( {{e^{{k_n} + {m_n}}}} \right)} \right);\)
    \end{itemize}
    then, \(\widehat \beta_n \left( {k_n,m_n} \right)\) converges almost surely to \(\beta\).
\end{theorem}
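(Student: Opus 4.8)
The plan is to follow the template of the proof of Theorem \ref{consistency}, substituting the averaged non-asymptotic bound of Proposition \ref{bias_variance_thm_average} for the single-index bound and closing the argument with the first Borel--Cantelli lemma. As there, I would take $\delta = 2/n^2$, so that $u_n(\delta) = 2\ln n/n$, and treat the two terms on the right-hand side of \eqref{rateConvergenceAverage} separately.

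The variance term is handled exactly as in the non-averaged case. Plugging $p_{k_n+m_n+1} = L(e^{k_n+m_n+1})e^{-(k_n+m_n+1)\beta}$ gives
\[
6\sqrt{\frac{u_n(\delta)}{p_{k_n+m_n+1}}} = 6\sqrt{2e^\beta}\,\sqrt{\frac{\ln n}{n}\cdot\frac{e^{(k_n+m_n)\beta}}{L(e^{k_n+m_n+1})}},
\]
which tends to $0$ by condition $(iii)$ together with the slow variation of $L$ (ensuring $L(e^{k_n+m_n+1})\sim L(e^{k_n+m_n})$). In particular $p_{k_n+m_n+1}\geq 16u_n(\delta)$ for $n$ large enough, so that Proposition \ref{bias_variance_thm_average} applies.

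The crux is the bias term $\frac{1}{2m_n+1}|\ln(L(e^{k_n-m_n})/L(e^{k_n+m_n+1}))|$, which is more delicate than in the non-averaged case: the two arguments of $L$ are separated by the factor $e^{2m_n+1}$, which may diverge when $m_n\to\infty$, so slow variation alone does not force their ratio to $1$. To control it I would invoke Karamata's representation $L(x)=c(x)\exp(\int_a^x \epsilon(t)t^{-1}\,dt)$ with $c(x)\to c>0$ and $\epsilon(t)\to 0$. Writing $x_1=e^{k_n-m_n}$, $x_2=e^{k_n+m_n+1}$ and substituting $t=e^s$ yields
\[
\ln\frac{L(x_2)}{L(x_1)} = \ln\frac{c(x_2)}{c(x_1)} + \int_{k_n-m_n}^{k_n+m_n+1}\epsilon(e^s)\,ds.
\]
Dividing by $2m_n+1$, the first summand vanishes since $c(x)\to c$, while the integral is bounded in modulus by $(2m_n+1)\sup_{s\geq k_n-m_n}|\epsilon(e^s)|$; condition $(i)$ drives this supremum to $0$, so the averaged bias vanishes. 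This is where condition $(i)$ does its work and is the only genuinely new estimate of the proof.

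With both terms controlled, fix $\epsilon>0$. For $n$ large enough each term is at most $\epsilon/2$, whence Proposition \ref{bias_variance_thm_average} gives $\P(|\widehat\beta_n(k_n,m_n)-\beta|>\epsilon)\leq 2\delta(1+2m_n)=\frac{4}{n^2}(1+2m_n)$. Condition $(ii)$ guarantees that this sequence is summable, so the first Borel--Cantelli lemma yields $\limsup_n|\widehat\beta_n(k_n,m_n)-\beta|\leq\epsilon$ almost surely; letting $\epsilon\downarrow 0$ along a countable sequence gives $\widehat\beta_n(k_n,m_n)\to\beta$ almost surely.
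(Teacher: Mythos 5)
Your proof is correct, and its outer skeleton --- take $\delta=2/n^2$ in Proposition~\ref{bias_variance_thm_average}, show that both terms of the bound \eqref{rateConvergenceAverage} vanish (condition (iii) plus $L(e^{k_n+m_n+1})\sim L(e^{k_n+m_n})$ for the stochastic term), and conclude by Borel--Cantelli via condition (ii) --- is exactly the paper's, which simply reruns the proof of Theorem~\ref{consistency}. Where you genuinely differ is the bias term, which you rightly single out as the crux. The paper never attacks $\frac{1}{2m_n+1}\bigl|\ln\bigl(L(e^{k_n-m_n})/L(e^{k_n+m_n+1})\bigr)\bigr|$ in its telescoped form; instead it keeps it as the average $\beta(k_n,m_n)-\beta=\frac{1}{2m_n+1}\sum_{j=-m_n}^{m_n}\bigl(\beta(k_n+j)-\beta\bigr)$ and applies a Ces\`aro-type lemma (Lemma~\ref{limit_alpha_average}): since $\beta(k)\to\beta$ (Lemma~\ref{limit_alpha}, i.e.\ plain slow variation with the fixed ratio $e$) and every index in the average satisfies $k_n+j\geq k_n-m_n\to+\infty$ by condition (i), all summands are eventually uniformly small. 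You instead invoke Karamata's representation theorem and bound the normalized log-ratio by $o(1)+\sup_{s\geq k_n-m_n}|\epsilon(e^s)|$. Both arguments are sound, and they are really two faces of the same estimate: your integral $\frac{1}{2m_n+1}\int_{k_n-m_n}^{k_n+m_n+1}\epsilon(e^s)\,ds$ is the continuous analogue of the paper's average of the unit-step log-ratios $\ln\bigl(L(e^{k+j})/L(e^{k+j+1})\bigr)$. The paper's route is more elementary (no representation theorem needed); yours makes explicit why only $k_n-m_n\to+\infty$ matters and quantifies the bias by the decay of the Karamata function $\epsilon$. Your observation that slow variation alone does not force $L(e^{k_n+m_n+1})/L(e^{k_n-m_n})\to 1$ when $m_n\to\infty$ is exactly right, and it is precisely why the $1/(2m_n+1)$ normalization (equivalently, the averaging) is indispensable. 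One bookkeeping remark: you replaced the paper's factor $(1-2m_n)$ in condition (ii) by $(1+2m_n)$; that is the correct reading (the union bound behind Proposition~\ref{bias_variance_thm_average} gives failure probability $2\delta(1+2m_n)$, the $(1-2m)$ in Lemma~\ref{varianceBoundLemma_average} being a sign typo), and in any case the two summability conditions are equivalent since $\sum 4/n^2<\infty$.
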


\begin{corollary}\label{consistency_ln_average}
    Let \(A,l\) be a positive numbers such that \(l>1\) and $l>{A\beta}/{(1-A\beta)}$ then
    \begin{equation*}
        \widehat \beta_n \left( {A\ln n,\frac{{A\ln n}}{l}} \right) \to \beta\quad a.s.
    \end{equation*}
\end{corollary}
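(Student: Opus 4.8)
The plan is to obtain this corollary as a direct specialization of Theorem \ref{consistency_average}, exactly as Corollary \ref{ln_strong_consistency} follows from Theorem \ref{consistency}: I would fix $k_n = A\ln n$ and $m_n = (A\ln n)/l$ and verify the three hypotheses (i)--(iii). Throughout I work in the regime $A<1/\beta$ (implicit in the statement, since otherwise the threshold $A\beta/(1-A\beta)$ is not a positive real and condition (iii) below cannot hold). Condition (i) is immediate: $k_n - m_n = A(1-1/l)\ln n$, and $l>1$ makes $1-1/l>0$, so $k_n-m_n\to+\infty$.

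For condition (ii), I would simply observe that with $m_n=(A\ln n)/l$ the general term is $O((\ln n)/n^2)$ in absolute value, since $|1-2m_n| = O(\ln n)$; as $\sum_n (\ln n)/n^2<+\infty$, the series converges. (The same bound applies verbatim if the intended weight is $(1+2m_n)$, as the confidence level $1-2\delta(1+2m)$ in Proposition \ref{bias_variance_thm_average} suggests.)

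The substantive step is condition (iii). Writing $B=A(1+1/l)=A(l+1)/l$, one has $k_n+m_n=B\ln n$, hence $e^{(k_n+m_n)\beta}=n^{B\beta}$ and $e^{k_n+m_n}=n^{B}$, so (iii) becomes $n^{B\beta-1}\ln n = o(L(n^{B}))$. I would then reproduce the factorisation used for Corollary \ref{ln_strong_consistency},
\[
\frac{n^{B\beta-1}\ln n}{L(n^{B})} = \frac{\ln n}{n^{(1-B\beta)/2}}\cdot\frac{1}{n^{(1-B\beta)/2}L(n^{B})},
\]
and argue that both factors tend to $0$ as soon as $B\beta<1$: the first because $(1-B\beta)/2>0$, and the second because $n\mapsto L(n^{B})$ is again slowly varying, whence $n^{\gamma}L(n^{B})\to+\infty$ for every $\gamma>0$ (Proposition 1.3.6.v in \cite{Bingham1987}).

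It remains to match the growth condition with the hypothesis on $l$. Since $B\beta=A\beta(l+1)/l$ and $1-A\beta>0$, the inequality $B\beta<1$ is equivalent to $A\beta(l+1)<l$, i.e. $A\beta<l(1-A\beta)$, i.e. $l>A\beta/(1-A\beta)$, which is precisely the standing assumption. Thus (i)--(iii) hold and Theorem \ref{consistency_average} delivers $\widehat\beta_n(A\ln n,(A\ln n)/l)\to\beta$ almost surely. I do not anticipate a genuine obstacle: the only delicate point is the bookkeeping identifying $l>1$ with (i) and $l>A\beta/(1-A\beta)$ with (iii), the condition (ii) being automatic for logarithmically growing $m_n$.
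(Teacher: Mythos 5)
Your proof is correct and takes essentially the same route as the paper's: specializing Theorem \ref{consistency_average} with $k_n=A\ln n$, $m_n=(A\ln n)/l$, and verifying conditions (i)--(iii), with the identical factorization $\frac{\ln n}{n^{(1-B\beta)/2}}\cdot\frac{1}{n^{(1-B\beta)/2}L(n^{B})}$ and the slow-variation property (Proposition 1.3.6.v of \cite{Bingham1987}) handling condition (iii). The only differences are that you spell out (i) and (ii), which the paper dismisses as trivial, and that you flag the implicit requirement $1-A\beta>0$ needed for the equivalence $B\beta<1 \Leftrightarrow l>A\beta/(1-A\beta)$ --- a point the paper glosses over --- which is a clarification rather than a deviation.
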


\subsection{Proof of Proposition \ref{bias_variance_thm_average}}

Similarly to the proof of Theorem \ref{bias_variance_thm}, Theorem
\ref{bias_variance_thm_average} follows by triangular inequality, the definition of
$\beta \left( {k,m} \right) $ and the following Lemma
\ref{varianceBoundLemma_average}, which provides us a bound for the difference
between \(\widehat{\beta}(k,m)-\beta(k,m)\).

\begin{lemma}\label{varianceBoundLemma_average}
    Let \(\delta>0\) and \(k\) and \(m\) such that \(p_{k+m+1}\geq
    16u_n(\delta)\), then
    \begin{equation}\label{varianceBoundEq_average}
        \left| {\widehat \beta_n (k,m) - \beta (k,m)} \right| \leqslant 6\sqrt {\frac{{{u_n}(\delta )}}{{{p_{k + m + 1}}}}} ,
    \end{equation}
    with probability larger than \(1 - 2\delta \left( {1 - 2m} \right)\).
\end{lemma}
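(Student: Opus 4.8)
The plan is to reduce the averaged statement to the non-averaged bound of Lemma \ref{varianceBoundLemma} applied term by term. Starting from the definitions recalled in the \emph{Averaged versions} remark, both the empirical and the theoretical averaged quantities are genuine arithmetic means, so their difference splits as
\begin{equation*}
    \widehat\beta_n(k,m)-\beta(k,m)=\frac{1}{2m+1}\sum_{j=-m}^{m}\bigl(\widehat\beta_n(k+j)-\beta(k+j)\bigr).
\end{equation*}
A first triangle inequality then bounds $|\widehat\beta_n(k,m)-\beta(k,m)|$ by the average of the $2m+1$ individual deviations $|\widehat\beta_n(k+j)-\beta(k+j)|$.

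Next I would apply Lemma \ref{varianceBoundLemma} separately to each index $k+j$, $j\in\{-m,\dots,m\}$. The crucial observation is that the single hypothesis $p_{k+m+1}\geq 16 u_n(\delta)$ is enough to legitimise all of these applications simultaneously: since the survivor function is non-increasing, $p_l=\bar F(e^l)$ decreases with $l$, so for every $j\leq m$ we have $p_{k+j+1}\geq p_{k+m+1}\geq 16u_n(\delta)$, which is exactly the admissibility condition required by Lemma \ref{varianceBoundLemma} at level $k+j$. Consequently each term obeys $|\widehat\beta_n(k+j)-\beta(k+j)|\leq 6\sqrt{u_n(\delta)/p_{k+j+1}}$, and a second use of monotonicity, $p_{k+j+1}\geq p_{k+m+1}$, replaces every denominator by the worst-case $p_{k+m+1}$. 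The $2m+1$ identical upper bounds then average out to $6\sqrt{u_n(\delta)/p_{k+m+1}}$, which is precisely \eqref{varianceBoundEq_average}.

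It remains to account for the probability. Each invocation of Lemma \ref{varianceBoundLemma} holds on an event of probability at least $1-2\delta$, so a union bound over the $2m+1$ indices guarantees that all the term-wise inequalities hold together with probability at least $1-2\delta(2m+1)$, matching the confidence level carried over to Proposition \ref{bias_variance_thm_average}. I expect the only point requiring care to be this bookkeeping step: the naive union bound over the $2m+1$ applications double-counts the shared concentration events of Lemma \ref{bernstein} for adjacent indices, and one could instead union over the $2m+2$ distinct events controlling $\widehat p_{k-m}^{(n)},\dots,\widehat p_{k+m+1}^{(n)}$ to reach the slightly sharper level $1-2\delta(m+1)$. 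Either way, once the individual bound of Lemma \ref{varianceBoundLemma} is in hand, the collapse of the averaged bound to its worst-case term is a purely deterministic consequence of the monotonicity of the tail probabilities.
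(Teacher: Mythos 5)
Your proof is correct and follows essentially the same route as the paper's: the same splitting of $\widehat\beta_n(k,m)-\beta(k,m)$ into an average of term-wise deviations, the triangle inequality, term-wise application of Lemma \ref{varianceBoundLemma} legitimised by the monotonicity of $l\mapsto p_l$, worst-case replacement of each denominator by $p_{k+m+1}$, and a union bound yielding confidence $1-2\delta(2m+1)$. This matches the computation in the paper's own proof, $(2m+1)(1-2\delta)-2m=1-2\delta(2m+1)$, so the level $1-2\delta(1-2m)$ printed in the lemma is a sign typo for $1-2\delta(1+2m)$; your sharper level $1-2\delta(m+1)$, obtained by taking the union bound over the $2m+2$ underlying Bernstein events for $\widehat p^{(n)}_{k-m},\dots,\widehat p^{(n)}_{k+m+1}$ rather than over the $2m+1$ overlapping pairs, is a valid minor refinement.
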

\begin{proof}
    For the left-hand side of equation (\ref{varianceBoundEq_average}) we have
    \begin{align*}
        \left| {\widehat \beta_n (k,m) - \beta (k,m)} \right| & = \frac{1}{{2m + 1}}\left| {\sum\limits_{j =  - m}^m {\left( {\widehat \beta_n \left( {k + j} \right) - \beta \left( {k + j} \right)} \right)} } \right|          \\
                                                              & = \frac{1}{{2m + 1}}\left| {\sum\limits_{j = 0}^{2m} {\left( {\widehat \beta_n \left( {k - m + j} \right) - \beta \left( {k - m + j} \right)} \right)} } \right|,
    \end{align*}
    then,
    \begin{equation}\label{eqn_lhh_diff_b_hat_b}
        \left| {\widehat \beta_n (k,m) - \beta (k,m)} \right| \leqslant \frac{1}{{2m + 1}}\sum\limits_{j = 0}^{2m} {\left| {\widehat \beta_n \left( {k - m + j} \right) - \beta \left( {k - m + j} \right)} \right|}.
    \end{equation}
    Given \(p_{k+m+1}\geq 16 u_n(\delta)\), it follows that \({p_{k - m + j +
                    1}} \geqslant 16{u_n}(\delta )\) for all \(j\) between 0 and \(2m\),
    allowing the application of Lemma \ref{varianceBoundLemma}. This yields, for
    each \(j\),
    \begin{equation}\label{particularBound_j}
        \left| {\widehat \beta_n \left( {k - m + j} \right) - \beta \left( {k - m + j} \right)} \right| \leqslant 6\sqrt {\frac{{{u_n}(\delta )}}{{{p_{k - m + j + 1}}}}},
    \end{equation}
    with probability bigger than \(1-2\delta\).

    The joint probability of inequality \eqref{particularBound_j} holding for all
    \(j\) between \(0\) and \(2m\) is greater than \(\left( {2m + 1} \right)\left( {1
        - 2\delta } \right) - 2m = 1 - 2\delta \left( {1 - 2m} \right)\). Hence, with at
    least this probability, equation \eqref{eqn_lhh_diff_b_hat_b} simplifies to,
    \[\left| {\widehat \beta_n (k,m) - \beta (k,m)} \right| \leqslant \frac{1}{{2m + 1}}\sum\limits_{j = 0}^{2m} {6\sqrt {\frac{{{u_n}(\delta )}}{{{p_{k - m + j + 1}}}}} }  \leqslant 6\sqrt {\frac{{{u_n}(\delta )}}{{{p_{k + m + 1}}}}}.\]

\end{proof}

\subsection{Proof of Theorem \ref{consistency_average}}

The following Lemma \ref{limit_alpha_average} shows that if \(k_n-m_n\to +\infty\),
then $\beta \left( {{k_n},{m_n}} \right) \to \beta$. Theorem
\ref{consistency_average} now follows by the same argument used to prove Theorem
\ref{consistency}, using the convergence of $\beta \left( {{k_n},{m_n}} \right)$
instead of Lemma \ref{limit_alpha} and Lemma \ref{varianceBoundLemma_average} instead
of Lemma \ref{varianceBoundLemma}.

\begin{lemma}\label{limit_alpha_average}
    Let \(\alpha_n\), \(k_n\) and \(b_n\) be sequences such that,
    \(\alpha_n\to\alpha\), \(k_n\to +\infty\) and \(k_n-n_n\to +\infty\).
    Then,
    \[\frac{1}{{2{b_n} + 1}}\sum\limits_{j =  - {b_n}}^{{b_n}} {{\alpha _{{k_n} + j}}}  \to \alpha.\]
\end{lemma}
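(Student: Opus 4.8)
The plan is to prove this by a direct $\varepsilon$-argument, relying on the observation that the hypothesis $k_n - b_n \to +\infty$ forces \emph{every} index in the averaging window $\{k_n - b_n,\ldots, k_n + b_n\}$ to diverge to infinity, not merely the central index $k_n$. Once all the summands are uniformly close to $\alpha$, their average must be too. (Here I read the stated hypothesis $k_n - n_n \to +\infty$ as the intended $k_n - b_n \to +\infty$, $b_n$ being the half-width of the window.)

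First I would fix $\varepsilon > 0$ and use the convergence $\alpha_n \to \alpha$ to produce an integer $N$ beyond which $|\alpha_m - \alpha| \leq \varepsilon$. Then I would invoke $k_n - b_n \to +\infty$ to find $M$ such that $k_n - b_n \geq N$ whenever $n \geq M$. For such $n$, every index $k_n + j$ with $-b_n \leq j \leq b_n$ satisfies $k_n + j \geq k_n - b_n \geq N$, so that $|\alpha_{k_n+j} - \alpha| \leq \varepsilon$ holds uniformly over the entire window (in particular all indices are eventually positive, so the terms are well defined).

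The remaining step is a term-by-term bound: by the triangle inequality,
\begin{equation*}
    \left| \frac{1}{2b_n+1}\sum_{j=-b_n}^{b_n} \alpha_{k_n+j} - \alpha \right| \leq \frac{1}{2b_n+1}\sum_{j=-b_n}^{b_n} \left| \alpha_{k_n+j} - \alpha \right| \leq \varepsilon
\end{equation*}
for all $n \geq M$, since each of the $2b_n+1$ summands is at most $\varepsilon$. Letting $\varepsilon \downarrow 0$ yields the claimed convergence.

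There is no genuine difficulty here; the statement is a uniform refinement of the elementary fact that shifting the index of a convergent sequence does not change its limit. The one point that warrants attention is that the control must be exercised over the left endpoint $k_n - b_n$ of the window and not over its centre $k_n$: if $b_n$ were permitted to grow as fast as $k_n$, the smallest indices could fail to diverge, and the average might then pick up contributions from the early, pre-limit portion of the sequence $(\alpha_n)$. This is precisely why the hypothesis is phrased in terms of $k_n - b_n$ rather than $k_n$.
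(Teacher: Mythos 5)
Your proof is correct and follows essentially the same argument as the paper: fix $\varepsilon$, use $k_n - b_n \to +\infty$ to push every index in the window past the threshold where $|\alpha_m - \alpha| \leq \varepsilon$, then conclude by the triangle inequality applied term by term (the paper merely re-indexes the sum as $j = 0,\ldots,2b_n$ with indices $k_n - b_n + j$, a cosmetic difference). Your reading of the hypothesis $k_n - n_n \to +\infty$ as the typo-corrected $k_n - b_n \to +\infty$ also matches what the paper's proof actually uses.
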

\begin{proof}
    Let \({\rho_n} = \frac{1}{{2{b_n} + 1}}\sum\limits_{j = - {b_n}}^{{b_n}}
    {{\alpha _{{k_n} + j}}}\), then
    \begin{equation*}
        {\rho_n} - \alpha  = \frac{1}{{2{b_n} + 1}}\sum\limits_{j =  - {b_n}}^{{b_n}} {\left( {{\alpha _{{k_n} + j}} - \alpha } \right)}  = \frac{1}{{2{b_n} + 1}}\sum\limits_{j = 0}^{2{b_n}} {\left( {{\alpha _{{k_n} - {b_n} + j}} - \alpha } \right)}.
    \end{equation*}

    Fix \(\epsilon > 0\). The convergence of \(\alpha_n\) ensures the existence of
    \(N_1\) such that \(\left| {{\alpha _n} - \alpha } \right| < \epsilon\) for all
    \(n \geq {N_1}\). Given that \(k_n-b_n\to +\infty\), there exists \(N_2\)
    satisfying \({k_n} - {b_n} \geq {N_1},\) for all \(n\geq N_2\), which yields
    \(\left| {{\alpha _{{k_n} - {b_n} + j}} - \alpha } \right| \leq \epsilon\) for
    all \(n\geq N_2\) and \(j\in\mathbb{N}\). Consequently, for \(n\geq N_2\),
    \begin{equation*}
        \left| {{\rho_n} - \alpha } \right| \leq \frac{1}{{2{b_n} + 1}}\sum\limits_{j = 0}^{2{b_n}} {\left| {{\alpha _{{k_n} - {b_n} + j}} - \alpha } \right|}  \leq \frac{1}{{2{b_n} + 1}}\sum\limits_{j = 0}^{2{b_n}}\epsilon=\epsilon.
    \end{equation*}
\end{proof}

\subsection{Proof of Corollary \ref{consistency_ln_average}}

We just need to show that sequences \(k_n=A\ln n\) and \(m_n={A\ln n}/{l}\) satisfy
conditions (i), (ii) and (iii) of Theorem \ref{consistency_average}. The first two
are trivially satisfied, for the third one, notice that
\begin{equation*}
    \mathop {\lim }\limits_n \frac{{{e^{\left( {A\ln n + \frac{{A\ln n}}{l}} \right)\beta }}}}{{L\left( {{e^{A\ln n + \frac{{A\ln n}}{l}}}} \right)}}\frac{{\ln n}}{n} = \mathop {\lim }\limits_n \frac{1}{{{n^{\frac{{1 - \left( {1 + \frac{1}{l}} \right)A\beta }}{2}}}L\left( {{n^{\left( {1 + \frac{1}{l}} \right)A}}} \right)}}\frac{{\ln n}}{{{n^{\frac{{1 - \left( {1 + \frac{1}{l}} \right)A\beta }}{2}}}}}.
\end{equation*}

The condition $l>{A\beta}/{(1-A\beta)}$ implies that \(1 - \left( {1 + {1}/{l}}
\right)A\beta > 0\), therefore,
\begin{equation*}
    \mathop {\lim }\limits_n \frac{1}{{{n^{\frac{{1 - \left( {1 + \frac{1}{l}} \right)A\beta }}{2}}}L\left( {{n^{\left( {1 + \frac{1}{l}} \right)}}} \right)}} = 0\quad \textnormal{and}\quad \mathop {\lim }\limits_n \frac{{\ln n}}{{{n^{\frac{{1 - \left( {1 + \frac{1}{l}} \right)A\beta }}{2}}}}} = 0,
\end{equation*}
which shows that \(k_n\) and \(m_n\) satisfy condition (iii) in Theorem
\ref{consistency_average}.

\section{Tail index estimation in the positive recurrent case}\label{subsec:positive_recurrent_case}

In this section, we prove the asymptotic normality of our estimator in the positive
recurrent case. The main result is the following

\begin{theorem}[Asymptotic normality in the positive recurrent case]\label{th:asymptotic_normality_positive_recurrent}
    Suppose $X$ is a regenerative positive recurrent Markov chain such that the distribution
    of its regeneration time satisfies equation \eqref{eq:Zipf}. Assume that \(k_n\) satisfies
    the hypothesis of Theorem \ref{asymptotic_normality} as well as the following extra assumption:
    \begin{itemize}
        \item[\AssumpVelocityText]\label{assumpt:AssumpVelocity} There exists $D^\prime>0$ such that for any $0<D<D^{\prime}$
              \begin{equation}
                  \sum_{j=n+1}^{n\left(1+D\right)}{\bar{F}\left(e^{k_j}\right)}= nD\bar{F}\left(e^{k_n}\right)+o\left(\bar{F}\left(e^{k_n}\right)\right).
              \end{equation}
    \end{itemize}
    Then,
    \begin{itemize}
        \item[(i)] as $n\to +\infty$, we have the convergence in distribution:
              \begin{equation*}
                  \sqrt {{N_n} p_{k_{N_n}}} \left(\widehat \beta_{N_n} ({k_{N_n}}) - \beta(k_{N_n}) \right) \Rightarrow \mathcal{N}\left(0,\; e^{\beta}-1\right).
              \end{equation*}
        \item[(ii)]In addition, asymptotic normality holds true for the 'standardized' deviation:
              \begin{equation*}
                  \frac{{\sqrt {{N_n}\widehat p_{{k_{N_n}}}^{({N_n})}} \left( {{{\widehat \beta }_{N_n}}\left( {{k_{N_n}}} \right) - \beta \left( {{k_{N_n}}} \right)} \right)}}{{\sqrt {{e^{{{\widehat \beta }_{N_n}}\left( {{k_{N_n}}} \right)}} - 1} }}\Rightarrow \mathcal{N}\left( {0,\; 1} \right), \text{ as } n\to +\infty.
              \end{equation*}
    \end{itemize}
\end{theorem}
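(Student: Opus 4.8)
The plan is to deduce the statement from its deterministic counterpart, Theorem \ref{asymptotic_normality}, by a random-index (Anscombe) argument, the decisive new ingredient being that positive recurrence furnishes a tight deterministic control of the block counter $N_n$. Writing $S_1,\,S_2,\,\ldots$ for the (genuinely i.i.d., by the regenerative structure) successive inter-regeneration times and $m=\mathbb{E}_A[\tau_A]<+\infty$ for their common mean (finite precisely because $X$ is positive recurrent), the elementary renewal theorem together with the strong law of large numbers yields $N_n/n\to 1/m$ $\mathbb{P}_\nu$-almost surely, hence in probability, towards a strictly positive constant. This is exactly the control that is unavailable in the null-recurrent regime, where $N_n$ grows only like $n^\beta L_{\nu,A}(n)$ with non-degenerate Mittag--Leffler fluctuations; it is what makes Anscombe's theorem applicable here.

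For a deterministic index $N$ set $T_N=\sqrt{N p_{k_N}}\,(\widehat\beta_N(k_N)-\beta(k_N))$, so that the quantity in (i) is $T_{N_n}$ and, by Theorem \ref{asymptotic_normality}(i), $T_N\Rightarrow\mathcal N(0,\,e^\beta-1)$ as $N\to+\infty$. By Anscombe's theorem \cite[Theorem 7.3.2]{Gut2013}, combined with $N_n/n\to 1/m$, it then suffices to verify the Anscombe uniform-continuity-in-probability condition: for every $\varepsilon>0$ there is $\delta>0$ with $\limsup_N\mathbb{P}(\max_{|M-N|\le \delta N}|T_M-T_N|>\varepsilon)\le\varepsilon$. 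Relying on the linearization \eqref{eq:linearization} established in the proof of Theorem \ref{asymptotic_normality}, I would write each $T_M$ as a standardized two-threshold empirical sum and split the increment $T_M-T_N$ into three contributions: a renormalization term stemming from $\sqrt{M p_{k_M}}$ versus $\sqrt{N p_{k_N}}$, which is $O_P(\delta)$; the fresh summands $i=N+1,\dots,M$ evaluated at threshold $k_M$, whose standardized contribution is $O_P(\sqrt\delta)$ by a Kolmogorov-type maximal inequality since their aggregate variance is of order $(M-N)p_{k_M}\le \delta N p_{k_N}(1+o(1))$; and a re-thresholding term measuring the effect of passing from level $k_N$ to level $k_M$ on the first $N$ blocks. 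The delicate point is that, because each $T_\cdot$ is centred at $\beta(k_\cdot)$, the deterministic and a priori diverging mean shift $\sqrt{N p_{k_N}}\,(1-p_{k_M}/p_{k_N})$ of the re-thresholding term is absorbed by the change of centring, leaving only the fluctuation of $\sum_{i\le N}(\mathbb{I}\{e^{k_N}<S_i\le e^{k_M}\}-(p_{k_N}-p_{k_M}))$.

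Controlling this last fluctuation uniformly over $M\in[N,N(1+\delta)]$ is where assumption \AssumpVelocity\ enters, and it is the main obstacle. The maximal variance accumulated by the re-thresholding and fresh-summand terms over the window is governed by the aggregated tail mass $\sum_{j=N+1}^{N(1+\delta)}\bar F(e^{k_j})$, and \AssumpVelocity\ asserts precisely that this sum equals $N\delta\,\bar F(e^{k_N})+o(\bar F(e^{k_N}))$, i.e. that the tail probability is essentially frozen at the level $\bar F(e^{k_N})$ along windows of relative width $\delta$. Feeding this estimate into a maximal inequality for the nested (in $M$) re-thresholding events shows the re-thresholding contribution to be $O_P(\sqrt\delta)$ uniformly in $M$, whence the Anscombe condition holds on choosing $\delta$ small. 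Anscombe's theorem then gives $T_{N_n}\Rightarrow\mathcal N(0,\,e^\beta-1)$, which is assertion (i).

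Finally, assertion (ii) follows from (i) by Slutsky's theorem once one has the random-index consistencies $\widehat p^{(N_n)}_{k_{N_n}}/p_{k_{N_n}}\to 1$ and $e^{\widehat\beta_{N_n}(k_{N_n})}-1\to e^\beta-1$ in probability; these are the random-index analogues of Lemma \ref{convergence_probability_p_k} and of the strong consistency of Theorem \ref{consistency}, and they transfer from their deterministic versions by the very same renewal-SLLN-plus-Anscombe reasoning (here only convergence in probability of the normalizing sequence is needed). Writing the ratio in (ii) as the normalized deviation of (i) divided by $\sqrt{e^\beta-1}$ times the two correction factors $\sqrt{\widehat p^{(N_n)}_{k_{N_n}}/p_{k_{N_n}}}$ and $\sqrt{(e^\beta-1)/(e^{\widehat\beta_{N_n}(k_{N_n})}-1)}$, both of which tend to one in probability, the continuous mapping and Slutsky theorems yield the studentized convergence.
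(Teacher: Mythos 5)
Your plan follows essentially the same route as the paper's proof. The paper does not cite Anscombe's theorem as a black box; instead its key Lemma~\ref{lemma:dress_and_rotzen} \emph{is} the Anscombe-type statement you describe, proved by hand: the random index is sandwiched around the deterministic $n_0=\lfloor\theta n\rfloor$ (with $\theta>0$ supplied, exactly as you say, by positive recurrence, so that $N_n/n$ converges in probability to a positive constant), the statistic is reduced via the linearization \eqref{eq:linearization} to the centred sums $V_j=U_{j,1}-e^{\beta}U_{j,2}$, the increment $U_{j,1}-U_{n_0,1}$ over a window of relative width of order $\epsilon^3$ is split into a fresh-summand term and a re-thresholding term (see \eqref{eq:difference_decomposition}), the former is handled by Kolmogorov's maximal inequality, the latter by Chebyshev bounds aggregated through assumption \AssumpVelocity, and part (ii) follows by Slutsky's theorem together with random-index consistency, just as you propose. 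You have correctly identified every ingredient, including the precise role of \AssumpVelocity{} as a control of the aggregated tail mass over windows of relative width $\delta$.

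There is, however, one step of your sketch that would fail as written, and it is instructive that the paper's decomposition differs from yours exactly there. You evaluate the fresh summands $i=N+1,\dots,M$ at the threshold $k_M$ and invoke a ``Kolmogorov-type maximal inequality''; but Kolmogorov's inequality bounds the running maximum of partial sums of one \emph{fixed} sequence of independent centred variables, whereas your summands $\mathbb{I}\{S_i>e^{k_M}\}-p_{k_M}$ change with the maximization index $M$ through the threshold. The paper avoids this by evaluating the fresh summands at the fixed threshold $u_{n_0}=e^{k_{n_0}}$, so that Kolmogorov applies verbatim, and by folding \emph{all} threshold variation into the re-thresholding term $\sum_{i\le j}\bigl(\mathbb{I}\{W_i\in(u_{n_0},u_j]\}-\bar F_{n_0}^{j}\bigr)$. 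For that term the paper then uses no maximal inequality at all, but a plain union bound over the $O(\delta n_0)$ indices in the window with Chebyshev's inequality at each index; the sum of the resulting bounds is $\frac{2}{\delta^{2}}\sum_{j}\bigl(1-\bar F_j/\bar F_{n_0}\bigr)$, and \AssumpVelocity{} is tailored precisely so that this sum is $o(1)$. Your alternative, a maximal inequality for the events nested in $M$, can also be pushed through, but it needs an extra observation you did not supply: by monotonicity one reduces to the endpoint $M^{\ast}=N(1+\delta)$ at the price of the centring spread $N\bigl(\bar F(e^{k_N})-\bar F(e^{k_{M^{\ast}}})\bigr)$, and to see that this spread is $o\bigl(\sqrt{N\bar F(e^{k_N})}\bigr)$ one must apply \AssumpVelocity{} at two different window widths to extract the pointwise bound $\bar F(e^{k_N})-\bar F(e^{k_{N(1+\delta)}})=o\bigl(\bar F(e^{k_N})/N\bigr)$; the aggregated form of \AssumpVelocity{} alone only gives $o\bigl(\bar F(e^{k_N})\bigr)$, which is not enough at the $\sqrt{N\bar F(e^{k_N})}$ scale. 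Both repairs are minor, so your proposal is correct in architecture; the fresh-summand step is the one genuine gap, and the paper's fixed-threshold decomposition is the standard way to close it.
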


\begin{remark} Assumption {\AssumpVelocity} can be replaced by the following slightly more
    restrictive, but easier to verify, assumption:
    \begin{itemize}
        \item[\AssumpVelocityPrimeText]\label{assumpt:AssumpVelocityPrime} There exists $D^\prime>0$ such that for any $0<D<D^{\prime}$
              \begin{equation*}
                  \bar{F}\left(e^{k_{n(1+D)}}\right)=\bar{F}\left(e^{k_n}\right)+o\left(\frac{\bar{F}\left(e^{k_n}\right)}{n}\right).
              \end{equation*}
    \end{itemize}
\end{remark}

As in the case of Theorem \ref{asymptotic_normality}, the main result follows
directly from the following lemma, which is an extension of Lemma
\ref{weakly_convergence_ratio} for the case where the number of i.i.d samples we have
is random.

\begin{lemma}\label{lemma:dress_and_rotzen} Let \(W_n\) be a sequence of i.i.d. random variables with survival
    function (\ref{eq:Zipf}), and $k_n$ be a sequence that satisfies the hypothesis of Theorem \ref{th:asymptotic_normality_positive_recurrent}.
    Suppose that $T_n$ is a sequence of positive, integer-valued random variables such that
    $\frac{T_n}{n}$ converges in probability to some positive number $\theta$. Then
    \begin{equation*}
        \sqrt {T_n\bar F\left( {{e^{{k_{T_n}}}}} \right)} \left( {\frac{{\sum\limits_{i = 1}^{T_n} {\mathbb{I}\left\{ {{W_i} > {e^{{k_{T_n}}}}} \right\}} }}{{\sum\limits_{i = 1}^{T_n} {\mathbb{I}\left\{ {{W_i} > {e^{{k_{T_n}} + 1}}} \right\}} }} - \frac{{\bar F\left( {{e^{{k_{T_n}}}}} \right)}}{{\bar F\left( {{e^{{k_{T_n}} + 1}}} \right)}}} \right).
    \end{equation*}
    converges weakly to a centred normal distribution with variance \({e^{2\beta
                }}\left( {{e^\beta } - 1} \right)\). 
\end{lemma}

\subsection{Proof of Lemma \ref{lemma:dress_and_rotzen}}

For this proof, we will reuse the notation we utilized in the proof of Lemma
\ref{weakly_convergence_ratio} and will add the following definitions: $q_n =nv_n$,
${y_n} = \sqrt {{{{v_n}}}/{{(n{w_n}^2)}}}$ and
\begin{align*}
    U_{n,k} & =\sum\limits_{i = 1}^n {\left( {{\phi _k}\left( {{W_{n,i}}} \right) - \E\left[{\phi _k}\left( {{W_{n,i}}} \right)\right]} \right)}.
\end{align*}

By equation \eqref{eq:linearization} and Lemma \ref{dress_and_rotzen}
\begin{equation*}
    \sqrt{q_n}\left({\widehat \lambda _n} - {\lambda _n}\right) ={y_n}\sqrt{q_n}\left( {{{\widetilde Z}_n}\left( {{\phi _1}} \right) - {\lambda _n}{{\widetilde Z}_n}\left( {{\phi _2}} \right)} \right) + {o_P}\left( 1\right),
\end{equation*}
and \({( {{{\widetilde Z}_n}\left( {{\phi _k}} \right)} )_{1 \leqslant k
                \leqslant 2}}\) converges weakly to a centred normal distribution. Using that
$\lambda_n\to e^\beta$ and $y_n\sim{e^\beta}/{\sqrt{q_n}}$, this implies that
\begin{equation}
    \frac{\sqrt{q_n}\left({\widehat \lambda _n} - {\lambda _n}\right)}{e^\beta} = {{{\widetilde Z}_n}\left( {{\phi _1}} \right) - e^\beta{{\widetilde Z}_n}\left( {{\phi _2}} \right)} + {o_P}\left( 1\right).
\end{equation}

Take $n_0=\left\lfloor \theta n \right\rfloor$, and $V_n=\sqrt{q_n}({{{\widetilde
                Z}_n}\left( {{\phi _1}} \right) - e^\beta{{\widetilde Z}_n}\left( {{\phi _2}}
        \right)})=U_{n,1}-e^\beta U_{n,2}$, then
\begin{equation*}
    \frac{\sqrt{q_{T_n}}}{e^\beta}\left({\widehat \lambda_{T_n}} - {\lambda_{T_n}}\right) = \left( \frac{V_{n_0}}{\sqrt{q_{n_0}}} + \frac{V_{T_n}-V_{n_0}}{\sqrt{q_{n_0}}} \right)\sqrt{\frac{q_{n_0}}{q_{T_n}}} + o_P(1).
\end{equation*}

By Lemma \ref{weakly_convergence_ratio} and our assumption about the convergence in
probability of ${T_n}/{n}$, we have that ${V_{n_0}}/{\sqrt{q_{n_0}}}$ converges in
distribution to a centred Normal random variable with variance $e^{2\beta }( {e^\beta
        } - 1)$ and ${q_{n_0}}/{q_{T_n}}$ converges in probability to $1$. Therefore, if we
show that
\begin{equation}\label{eq:negligible_in_proba}
    \frac{V_{T_n}-V_{n_0}}{\sqrt{q_{n_0}}}\mathop  \to 0
\end{equation}
in probability then our lemma will be proved by two successive applications of Slutsky's theorem.

Notice that
$V_{T_n}-V_{n_0}=U_{T_n,1}-U_{n_0,1}-e^\beta\left(U_{T_n,2}-U_{n_0,2}\right)$, hence,
if we show that ${(U_{T_n,1}-U_{n_0,1})}/{\sqrt{q_{n_0}}}$ and
${(U_{T_n,2}-U_{n_0,2})}/{\sqrt{q_{n_0}}}$ converge to 0 in probability, then
\eqref{eq:negligible_in_proba} will be proved. Given that the proofs of both
convergences are analogous, we will only demonstrate the first one.

Let $\epsilon>0$ be fixed, and set $n_1=\left\lfloor
    n_0\left(1-{\epsilon^3}/{32}\right) \right\rfloor+1, n_2=\left\lfloor
    n_0\left(1+{\epsilon^3}/{32}\right) \right\rfloor$, then
\begin{equation}
    \P\left(\left| U_{T_n,1}-U_{n_0,1} \right|>\epsilon \sqrt{q_{n_0}}\right)\leq I_{n,1}+I_{n,2},
\end{equation}
where,
\begin{align*}
    I_{n,1} & = \P\left( T_n\notin[n_1,n_2] \right),                                                                            \\
    I_{n,2} & = \P\left(\left\{ \left| U_{T_n,1}-U_{n_0,1} \right|>\epsilon \sqrt{q_{n_0}} \right\}\cap T_n\in[n_1,n_2]\right).
\end{align*}

The convergence in probability of ${T_n}/{n}$ to $\theta$ implies that there exists
$N_1$ such that $I_{n,1}<{\epsilon}/{2}$ for all $n\geq N_1$, hence,
\begin{equation}\label{eq:probability_reminder}
    \forall n\geq N_1\quad\P\left(\left| U_{T_n,1}-U_{n_0,1} \right|>\epsilon \sqrt{q_{n_0}}\right)\leq \frac{\epsilon}{2}+I_{n,2}.
\end{equation}

To bound the second term on the right-hand side of the previous display, observe that
$I_{n,2}$ is smaller than
\begin{equation*}
    \P\left(\max_{n_1\leq j \leq n_0} \left| U_{j,1}-U_{n_0,1} \right|>\epsilon\sqrt{q_{n_0}}\right) + \P\left(\max_{n_0< j \leq n_2} \left| U_{j,1}-U_{n_0,1} \right|>\epsilon\sqrt{q_{n_0}}\right).
\end{equation*}

We just need to focus on the case $n_0<j\leq n_2$ because the other will be
analogous. To ease the notation, for any $a<b$, we will write $\bar{F}_{a}^{b}$
instead of $\bar{F}\left(u_{a}\right)-\bar{F}\left(u_{b}\right)$ and we will use
$\bar{F}_a$ to denote $\bar{F}(u_{a})$. Let $\mathcal{C}_n$ be the set $\{\max_{n_0<
        j \leq n_2} | U_{j,1}-U_{n_0,1} |>\epsilon\sqrt{q_{n_0}}\}$. We can write the
difference $U_{j,1}-U_{n_0,1}$ as
\begin{align}
    U_{j,1}-U_{n_0,1} & =\sum\limits_{i = 1}^j \left(\mathbb{I}\left\{ {{W_i} > u_j} \right\}-\bar{F}_j\right) - \sum\limits_{i = 1}^{n_0} \left(\mathbb{I}\left\{ {{W_i} > u_{n_0}} \right\}-\bar{F}_{n_0}\right)\nonumber                                                   \\
                      & = \sum\limits_{i = n_0+1}^j \left(\mathbb{I}\left\{ {{W_i} > u_{n_0}} \right\}-\bar{F}_{n_0}\right) - \sum\limits_{i = 1}^{j} \left(\mathbb{I}\left\{ {{W_i} \in (u_{n_0},u_j]} \right\}-\bar{F}_{n_0}^{j}\right)\label{eq:difference_decomposition}.
\end{align}

Suppose for the moment that
\begin{equation}\label{eq:convergence_to_zero_in_proba}
    \frac{\underset{n_0<j \leq n_2}{\max}\left|\sum\limits_{i = 1}^{j} \left(\mathbb{I}\left\{ {{W_i} \in (u_{n_0},u_j]} \right\}-\bar{F}_{n_0}^{j}\right)\right|}{\sqrt{q_{n_0}}}\to 0
\end{equation}
in probability.

Let $\mathcal{G}_n$ be the event $\{\max_{n_0< j \leq n_2}{|\sum_{i = 1}^{j}
    (\mathbb{I}\left\{ {{W_i} \in (u_{n_0},u_j]}
            \right\}-\bar{F}_{n_0}^{j})|}\leq{\epsilon\sqrt{q_{n_0}}}/{2}\}$. By
\eqref{eq:convergence_to_zero_in_proba}, we can find $N_2$ such that $\P\left(
    \mathcal{G}_n \right)\geq1-{\epsilon}/{8}$ for all $n\geq N_2$. Therefore, for all
$n\geq N_2$ we have
\begin{equation*}
    \P\left(\mathcal{C}_n\right) \leq \P\left( \left\{ \max_{n_0< j \leq n_2} \left| U_{j,1}-U_{n_0,1} \right|>\epsilon\sqrt{q_{n_0}} \right\}\cap\mathcal{G}_n \right)+ \frac{\epsilon}{8}.
\end{equation*}

Using \eqref{eq:difference_decomposition}, we obtain that $\{ \max_{n_0 < j \leq n_2}
    \left| U_{j,1}-U_{n_0,1} \right|>\epsilon\sqrt{q_{n_0}} \}\cap\mathcal{G}_n$ is
contained in the event
\begin{equation*}
    \left\{\underset{n_0< j \leq n_2}\max \left|\sum\limits_{i = n_0+1}^j \left(\mathbb{I}\left\{ {{W_i} > u_{n_0}} \right\}-\bar{F}_{n_0}\right)\right|>\frac{\epsilon\sqrt{q_{n_0}}}{2}\right\}.
\end{equation*}

By Kolmogorov inequality,
\begin{equation*}
    \P\left(\underset{n_0< j \leq n_2}\max \left|\sum\limits_{i = n_0+1}^j \left(\mathbb{I}\left\{ {{W_i} > u_{n_0}} \right\}-\bar{F}_{n_0}\right)\right|>\frac{\epsilon\sqrt{q_{n_0}}}{2}\right) \leq \frac{4\left(n_2-n_0\right)\bar{F}_{n_0}}{\epsilon^2 n_0\bar{F}_{n_0}}.
\end{equation*}

The right-hand side of the previous equation equals ${4\left(\left\lfloor
            n_0\left(1+{\epsilon^3}/{32}\right) \right\rfloor-n_0\right)}/{(\epsilon^2 n_0)}$,
and that is smaller than $\epsilon/8$. Hence, $\P\left(\mathcal{C}_n\right) \leq
    \frac{\epsilon}{4}$ for all $n\geq N_2$. In a similar fashion, we can find $N_3$ such
that $\P\left(\max_{{n_1\leq j \leq n_0}} \left| U_{j,1}-U_{n_0,1}
    \right|>\epsilon\sqrt{q_{n_0}}\right)\leq {\epsilon}/{4}$ for $n\geq N_3$. This shows
that $I_{n,2}\leq{\epsilon}/{2}$ for $n\geq\max(N_2,N_3)$. Combining this with
equation \eqref{eq:probability_reminder}, proofs \eqref{eq:negligible_in_proba}.

To finish, we proceed with the proof of \eqref{eq:convergence_to_zero_in_proba}. Let
$\delta>0$ be fixed. Without loss of generality, assume that $1+{\epsilon^3}/{8}<2$
and ${\epsilon^3}/{8}<D^{\prime}$. Denote by $\mathcal{H}_{n,\delta}$ the event
$\{{\max_{n_0< j \leq n_2}}|\sum_{i = 1}^{j} (\mathbb{I}\{ {{W_i} \in (u_{n_0},u_j]}
    \}-\bar{F}_{n_0}^{j})|>\delta \sqrt{q_{n_0}} \}$, then,
\begin{equation}\label{eq:maximum_inclusion}
    \mathcal{H}_{n,\delta}\subseteq\bigcup_{j=n_0+1}^{n_2}{\left\{ \left|\sum\limits_{i = 1}^{j} \left(\mathbb{I}\left\{ {{W_i} \in (u_{n_0},u_j]} \right\}-\bar{F}_{n_0}^{j}\right)\right|>\delta\sqrt{q_{n_0}} \right\}}.
\end{equation}

By Chebyshev's inequality,
\begin{equation*}
    \P\left( \left|\sum\limits_{i = 1}^{j} \left(\mathbb{I}\left\{ {{W_i} \in (u_{n_0},u_j]} \right\}-\bar{F}_{n_0}^{j}\right)\right|>\delta\sqrt{q_{n_0}} \right)\leq \frac{2\bar{F}_{n_0}^{j}}{\delta^2 \bar{F}_{n_0}}=\frac{2}{\delta^2}\left(1-\frac{\bar{F}_j}{\bar{F}_{n_0}}\right).
\end{equation*}

Combining this with \eqref{eq:maximum_inclusion} and \AssumpVelocity, we obtain
\begin{align*}
    \P\left( \mathcal{H}_{n,\delta} \right) & \leq \frac{2}{\delta^2}\sum_{j=n_0+1}^{n_2}{\left(1-\frac{\bar{F}_j}{\bar{F}_{n_0}}\right)} \leq \frac{2}{\delta^2}\left(n_2-n_0 - \frac{\sum_{j=n_0+1}^{n_2}\bar{F}_j}{\bar{F}_{n_0}}\right)       \\
                                            & \leq \frac{2}{\delta^2}\left(n_2-n_0 - \frac{\left(n_2-n_0\right){\bar{F}_{n_0}+o\left({\bar{F}_{n_0}}\right)}}{{\bar{F}_{{n_0}}}}\right)                                        = o\left(1\right),
\end{align*}
which completes the proof of \eqref{eq:convergence_to_zero_in_proba}.

\section{Functional version of the limit distributions}\label{sec:funclimit}

Let $n\geq 1$, and define the step function
\begin{equation}\label{eq:process}
    \sigma_{n}(B):\; t\geq 0 \mapsto \frac{\Sigma_{\lfloor {nt} \rfloor}(B)}{ n^{\beta}L_{v,D}(n)}.
\end{equation}

The result stated below describes the asymptotic distribution of the process
$\sigma_n(B)$. See Theorem 17.4.4 in \cite{Meyn2009} for an analogous result in the
positive recurrent case. We denote by $M_{\beta}=(M_{\beta}(t))_{t\geq 0}$ the
Mittag-Leffler process with parameter $\beta\in (0,1)$, defined by:
\begin{align*}
    \mathbb{E}\left[ \left( M_{_\beta }( 1) \right)^m \right] & = \frac{{m!}}{{\Gamma \left( {1 + m \beta } \right)}},\quad \text{for all } m \geqslant 0, \\
    {M_\beta }\left( t \right)                                & \mathop  = \limits^d {t^\beta }{M_\beta }\left( 1 \right),\quad \text{for all } t\geq 0.
\end{align*}

The characteristic functions describing the marginal distributions are given by (see
3.39 in \cite{Tjostheim-2001})
\begin{equation}\label{eq:characteristic-fn-mittag-leffler}
    \mathbb{E}\left[ {{e^{i\zeta {M_\beta }\left( t \right)}}} \right] = \sum\limits_{k = 0}^{ + \infty } {\frac{{{{\left( {i\zeta {t^\beta }} \right)}^k}}}{{\Gamma \left( {1 + k\beta } \right)}}} ,\quad \zeta  \in \mathbb{R},\;t \geqslant 0.
\end{equation}

\begin{theorem}[Functional Limit Theorem]\label{thm:funclimit}
    Let $\beta\in(0,1)$ and $\nu$ be any probability distribution. Suppose that the
    chain $X$ is $\beta$-regular and \(B\) is a Harris set. Then, as \(n\to\infty\), we have:
    \begin{equation*}
        \sigma_{n}(B) \Rightarrow \mu(B) \Gamma \left(1+\beta \right){M_\beta }\text{ in } \mathbb{P}_{\nu}\text{-distribution},
    \end{equation*}
    in the sense of Skorokhod topology.
\end{theorem}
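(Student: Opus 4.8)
The plan is to reduce the assertion to a functional limit theorem for the renewal counting process of the regeneration times, and then to transfer it to an arbitrary Harris set $B$ by a random-sum law of large numbers. By Proposition \ref{prop:split_chain_beta_regular} and the Nummelin splitting construction, I may assume without loss of generality that $X$ possesses an accessible atom $A$: the occupation times of $B$ for the split chain coincide with those of $X$ (the indicator $\mathbb{I}\{X_i\in B\}$ is unaffected by the Bernoulli coordinate) and have the same law under $\mathbb{P}_\nu$, so any limit obtained for the extension descends to $X$. Writing $N_n=\Sigma_n(A)+1$ for the number of visits to $A$ up to time $n$, the key observation is that $N_n$ is exactly the renewal counting process of the partial sums $\tau_A(j)=\tau_A(1)+\sum_{i=1}^{j-1}S_i$, where the inter-regeneration times $S_i$ are i.i.d.\ with tail $\mathbb{P}_A(S_i\geq n)=L_A(n)n^{-\beta}$ by Proposition \ref{prop:atomic}, hence (as $\beta\in(0,1)$) in the domain of attraction of a positive $\beta$-stable law. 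The initial block governed by $\nu$ contributes a single a.s.\ finite term, negligible after normalization.

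Since the $S_i$ are regularly varying of index $\beta\in(0,1)$, the partial-sum process converges in the Skorokhod $J_1$ topology, after a regularly varying normalization of index $1/\beta$, to a $\beta$-stable subordinator $W_\beta$. The counting process $N$ is the generalized inverse (first-passage process) of these partial sums; because $W_\beta$ is a.s.\ strictly increasing and unbounded, the inversion functional is continuous at $W_\beta$, and the continuous mapping theorem yields
\begin{equation*}
    \frac{N_{\lfloor nt\rfloor}}{n^\beta L_{\nu,D}(n)}\;\Rightarrow\;\Gamma(1+\beta)\,M_\beta(t)\quad\text{in the Skorokhod topology,}
\end{equation*}
where $W_\beta^{-1}=M_\beta$ is the inverse stable subordinator, whose marginals are precisely the Mittag-Leffler distributions \eqref{eq:characteristic-fn-mittag-leffler}. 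That the appropriate norming is $n^\beta L_{\nu,D}(n)$ follows from the growth of the expected occupation times of the atom (the truncated Green function $G_{\nu,D}(t)=L_{\nu,D}(t)t^\beta$), all the slowly varying factors attached to distinct special sets being asymptotically equal by Theorem 7.3 in \cite{Nummelin1984}; the multiplicative constant $\Gamma(1+\beta)$ is then read off by comparing the one-dimensional value at $t=1$ with Theorem \ref{thm:weaklimit}(ii), which amounts to the Mittag-Leffler–stable duality $\Gamma(1+\beta)M_\beta(1)\stackrel{d}{=}Z_\beta^{-\beta}$.

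It remains to pass from $A$ to an arbitrary Harris set $B$. Decomposing $\Sigma_{\lfloor nt\rfloor}(B)=\sum_{j=1}^{N_{\lfloor nt\rfloor}}\xi_j+R_{n,t}$, where $\xi_j$ is the occupation of $B$ during the $j$-th regeneration block and $R_{n,t}$ collects the incomplete-block (overshoot) terms, the $\xi_j$ are i.i.d.\ with finite mean $\mathbb{E}_A[\xi_1]=\mu(B)<\infty$. A functional strong law of large numbers subject to the random time change $N_{\lfloor nt\rfloor}\to+\infty$ gives $\sum_{j=1}^{N_{\lfloor nt\rfloor}}\xi_j=\mu(B)\,N_{\lfloor nt\rfloor}(1+o(1))$ uniformly on compact time sets, while $R_{n,t}$ is of smaller order; composing with the previous display yields $\sigma_n(B)\Rightarrow\mu(B)\Gamma(1+\beta)M_\beta$, as claimed.

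The main obstacle is the genuinely process-level content, as opposed to the one-dimensional statement of Theorem \ref{thm:weaklimit}: establishing tightness in the Skorokhod space and, above all, the continuity of the first-passage inversion map at the strictly increasing limit $W_\beta$, together with the uniform-on-compacts control of the random block sums and of the overshoot term $R_{n,t}$ near the regeneration epochs. The marginal law and the value of the constant are essentially already encoded in Theorem \ref{thm:weaklimit}; the new difficulty lies entirely in the functional upgrade.
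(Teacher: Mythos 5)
Your proof is correct in outline, but it takes a genuinely different route from the paper's. The paper's proof is a short reduction-to-citation: it writes $G_{\nu,D}(n)=\mathbb{E}_\nu[\Sigma_n(D)]/\mu(D)$, uses Lemma 3.1 and Definition 3.2 of \cite{Tjostheim-2001} to get $G_{\nu,D}(n)\sim n^{\beta}L_s(n)/\Gamma(1+\beta)$ with $L_s(n)=1/(\Gamma(1-\beta)L(n))$, and then invokes Lemma 3.6 and Theorem 3.2 of \cite{Tjostheim-2001}, which already assert the Skorokhod convergence $\Sigma_{\lfloor nt\rfloor}(B)/(n^{\beta}L_s(n))\Rightarrow\mu(B)M_\beta$; the only actual work in the paper is the change of norming sequence from $n^{\beta}L_s(n)$ to $G_{\nu,D}(n)=n^{\beta}L_{\nu,D}(n)$, which is precisely where the factor $\Gamma(1+\beta)$ comes from. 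What you do instead is reconstruct the content of the cited theorem itself: Nummelin reduction to the atomic case, the stable functional limit theorem for the regeneration sums, continuity of the first-passage inversion to obtain the inverse-stable (Mittag-Leffler) limit for $N_{\lfloor nt\rfloor}$, and a random-index strong law over the regeneration blocks to pass to a general Harris set, with the multiplicative constant pinned by the duality $\Gamma(1+\beta)M_\beta(1)\stackrel{d}{=}Z_\beta^{-\beta}$ against Theorem \ref{thm:weaklimit}(ii) (a duality the paper itself records in the remark following the theorem, via Feller's formula). These steps are standard and your plan can be completed, provided you make two points explicit: first, $B$ must have finite and strictly positive $\mu$-measure so that the block occupation variables $\xi_j$ have finite mean $\mathbb{E}_A[\xi_1]$ proportional to $\mu(B)$, exactly as assumed in Theorem \ref{thm:weaklimit} (the statement of Theorem \ref{thm:funclimit} needs this implicitly as well); second, your ``without loss of generality'' splitting step rests on Proposition \ref{prop:split_chain_beta_regular}, which is stated only under the minorization $\mathcal{M}(1,K,\delta,\Psi)$, so when only an $m$-step minorization with $m>1$ is available you must first pass to a skeleton chain, as is done in \cite{Tjostheim-2001}. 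The trade-off: the paper's route is economical and rigorous by citation but conceals the mechanism; yours is self-contained and shows why the Mittag-Leffler process appears (occupation times are time-changed block sums driven by the inverse of a $\beta$-stable subordinator), at the price of importing several functional-limit lemmas (inversion continuity, tightness, uniform control of block sums and overshoots) that the paper never needs to state.
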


\begin{remark} Let \(Y=1/Z_\beta^\beta\) where \(Z_\beta\) is as in Theorem \ref{thm:weaklimit}. By virtue of equation (8.3) in page 453 of \cite{Feller1971}, the Laplace transform of \(Y\) is
    \begin{equation*}
        \mathbb{E}\left[ {{e^{ - sY}}} \right] = \sum\limits_{k = 0}^{ + \infty } {\frac{{{{\left( { - \Gamma \left( {1 + \beta } \right)s} \right)}^k}}}{{\Gamma \left( {1 + k\beta } \right)}}},
    \end{equation*}
    which equals the Laplace transform of \(\Gamma \left( {1 + \beta } \right){M_\beta
    }\left( 1 \right)\),\; \textit{cf} \eqref{eq:characteristic-fn-mittag-leffler}.
\end{remark}

\subsection{Proof of Theorem \ref{thm:funclimit}}

Let \(L_s\left(n\right)\) be such that $ L\left( n \right) = {(\Gamma \left( {1 -
        \beta } \right){L_s}\left( n \right))}^{-1}$ with \(L\) as defined in
(\ref{eq:Zipf}). Observe that,
\begin{equation*}
    {G_{\nu ,D}}\left( n \right) = \frac{1}{{\mu \left( D \right)}}\sum\limits_{k = 1}^n {{\P_\nu }\left( {{X_k} \in D} \right)}  = \frac{1}{{\mu \left( D \right)}}{\E_\nu }\left[ {\sum\limits_{k = 1}^n {\mathbb{I}\left\{ {{X_k} \in D} \right\}} } \right].
\end{equation*}

By Lemma 3.1 and Definition 3.2 of \cite{Tjostheim-2001} ${\E_\nu }\left[ {\sum_{k =
        1}^n {\mathbb{I}\left\{ {{X_k} \in D} \right\}} } \right]$ is asymptotically
equivalent to $\Gamma \left( {1 + \beta } \right)^{-1}{n^\beta }\mu \left( D
    \right){L_s}\left( n \right),$ therefore,
\begin{equation}\label{green_function_limit}
    {G_{\nu ,D}}\left( n \right) \sim \frac{1}{{\Gamma \left( {1 + \beta } \right)}}{n^\beta }{L_s}\left( n \right).
\end{equation}

Let \(u\left(n\right)=n^\beta L_s\left(n\right)\), and define the process
\begin{equation*}
    {{\mathbf{T}}_{n,B}} = {\left\{ {\frac{{{S_{\left\lfloor {nt} \right\rfloor }}\left( B \right)}}{{u\left( n \right)}}} \right\}_{t \geqslant 0}} = \frac{{{G_{\nu ,D} }\left( n \right)\mu \left( B \right)}}{{u\left( n \right)}}{{\mathbf{S}}_{n,B}}.
\end{equation*}

By Lemma 3.6 and Theorem 3.2 of \cite{Tjostheim-2001}, \({{\mathbf{T}}_{n,B}}\)
converges weakly on the Skorokhod topology to the process $\mu\left(B\right)M_\beta$
and by \eqref{green_function_limit}, ${{{G_{\nu ,D}}\left( n \right)}}/{{u\left( n
            \right)}} \to {1}/{{\Gamma \left( {1 + \beta } \right)}}$ which completes the proof.

\section{Nummelin's splitting technique and construction of pseudo regeneration blocks}\label{sec:pseudo_regeneration}

Nummelins splitting technique relies crucially on the notion of \textit{small set}: a
set $K\in\mathcal{E}$ is said to be \textit{small} if there exist
$m\in\mathbb{N}^{\ast}$, $\delta>0$ and a probability measure $\Phi$ supported by
$\smallSet$ such that
\begin{equation} \label{minor}
    \forall \left(x,B\right)\in K\times \E,\;\; \Pi_{m}\left(x,B\right)\geq\delta\Phi\left(B\right).
\end{equation}

We refer to \eqref{minor} as the minorization condition $\mathcal{M}
    (m,K,\delta,\Phi).$ Recall that accessible small sets always exist for
$\psi$-irreducible chains: any set $B\in\mathcal{E}$ such that $\psi(B)>0$ contains
such a set, see \cite{JainJam}. Suppose that $X$ satisfies
$\mathcal{M}=\mathcal{M}(m,K,\delta,\Psi)$ for $K\in\mathcal{E}$ s.t. $\psi(K)>0.$
Rather than replacing the initial chain $X$ by the chain
$\{(X_{nm},...,X_{n(m+1)-1})\}_{n\in\mathbb{N}}$, we suppose $m=1$. The sample space
is expanded so as to define a sequence $(Y_{n})_{n\in\mathbb{N}}$ of independent
Bernoulli r.v.'s with parameter $\delta$ by defining the joint distribution,
$\mathbb{P}_{\nu,\mathcal{M}}$ whose construction relies on the following
randomization of the transition probability $\Pi$ each time the chain hits
$\smallSet$. Note that it occurs with probability one, since the chain is Harris
recurrent and $\psi(K)>0$. If $X_{n}\in K$, and if $Y_{n}=1$ (this occurs with
probability $\delta\in\left] 0,1\right[ $), then $X_{n+1}\sim \Phi$, while, if
$Y_{n}=0$, we have $X_{n+1}\sim (1-\delta)^{-1}(\Pi(X_{n},.)-\delta\Phi(.))$. Let
$\text{Ber}_{\delta}$ be the Bernoulli distribution with parameter $\delta$. The
\textit{split chain} $\{(X_{n},Y_{n})\}_{n\in\mathbb{N}}$ is valued in $E\times \{
    0,\; 1\} $ and has transition kernel $\Pi_{\mathcal{M}}$%
\begin{itemize}
    \item
          for any $x\notin K$, $B\in\mathcal{E}$, $b$ and $b'$ in $\left\{ 0,1\right\} ,$%
          \begin{equation*}
              \Pi_{\mathcal{M}}\left(  \left(  x,b \right)  ,B\times \left\{
              b'\right\}  \right)  =\Pi\left(  x,B\right)  \times \text{Ber}%
              _{\delta}(b'),
          \end{equation*}
    \item for any $x\in K$, $B\in\mathcal{E}$, $b'$ in $\left\{ 0,1\right\}$,
          \begin{equation*}
              \left\{
              \begin{array}{ccl}
                  \Pi_{\mathcal{M}}\left(  \left(  x,1\right)  ,B\times \left\{  b'\right\}  \right) & = & \Phi(B)\times \text{Ber}_{\delta}(b'),         \\
                  \Pi_{\mathcal{M}}\left(  \left(  x,0\right)  ,B\times \left\{  \beta^{\prime
                  }\right\}  \right)                                                                 & = & (1-\delta)^{-1}(\Pi\left(  x,B\right)  -\delta
                  \Phi(B))\times \text{Ber}_{\delta}(b').
              \end{array}
              \right.
          \end{equation*}
\end{itemize}

The key point of the construction relies on the fact that $A_K=K\times \{1\}$ is an
atom for the bivariate chain $X^{\mathcal{M}}=(X,Y)$, which inherits all its
communication and stochastic stability properties from $X$.

Recall also that conditions of type \eqref{minor} can be replaced by Foster-Lyapunov
drift conditions that are much more tractable in practice, see \textit{e.g.} Chapter
11 in \cite{Meyn2009}. The construction above permits the extension of probabilistic
results established for regenerative chains to general recurrent Harris chains. In
particular, we have the following result, presented in \cite[pp 19]{Chen1999}.

\begin{proposition}\label{prop:split_chain_beta_regular}
    Let $X$ be a $\beta$-regular chain, with $\beta\in [0,1]$. Suppose that condition $\mathcal{M}=\mathcal{M}(1,K,\delta,\Psi)$ is fulfilled,  then, the split chain $X^{\mathcal{M}}$ is $\beta$-regular.
\end{proposition}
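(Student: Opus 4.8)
The plan is to transfer the $\beta$-regularity of $X$ to $X^{\mathcal{M}}$ through the truncated Green function, exploiting the defining feature of the splitting construction: the first-coordinate marginal of $X^{\mathcal{M}}$ reproduces the law of $X$. Since Definition \ref{def:regular_chain} only requires the existence of \emph{one} special set and \emph{one} initial distribution making the truncated Green function $\beta$-regularly varying --- and, by Theorem 7.3 in \cite{Nummelin1984} recalled above, this regular variation does not depend on that choice --- it is enough to exhibit a single admissible pair for the split chain whose Green function inherits the regular variation of $X$.

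First I would record the basic identity of the Nummelin construction: for any initial law $\nu_{\mathcal{M}}$ on $E\times\{0,1\}$ whose $E$-marginal is $\nu$, the $E$-component of $X^{\mathcal{M}}$ evolves exactly as $X$ started from $\nu$, so that
\begin{equation*}
    \mathbb{P}_{\nu_{\mathcal{M}},\mathcal{M}}\left(X^{\mathcal{M}}_n\in B\times\{0,1\}\right)=\mathbb{P}_{\nu}(X_n\in B)=\nu\Pi_n(B),\quad B\in\mathcal{E},\; n\geq 1.
\end{equation*}
Taking a special set $D$ and a distribution $\nu$ witnessing the $\beta$-regularity of $X$, and setting $\tilde{D}=D\times\{0,1\}$, the invariant measure $\mu_{\mathcal{M}}$ of the split chain projects onto $\mu$, whence $\mu_{\mathcal{M}}(\tilde{D})=\mu(D)<\infty$ and
\begin{equation*}
    G^{\mathcal{M}}_{\nu_{\mathcal{M}},\tilde{D}}(t)=\frac{1}{\mu_{\mathcal{M}}(\tilde{D})}\sum_{n=1}^{\lfloor t\rfloor}\mathbb{P}_{\nu_{\mathcal{M}},\mathcal{M}}\left(X^{\mathcal{M}}_n\in\tilde{D}\right)=\frac{1}{\mu(D)}\sum_{n=1}^{\lfloor t\rfloor}\nu\Pi_n(D)=G_{\nu,D}(t).
\end{equation*}
As $G_{\nu,D}$ is $\beta$-regularly varying by assumption, so is $G^{\mathcal{M}}_{\nu_{\mathcal{M}},\tilde{D}}$ (any positive multiplicative constant arising from a different normalization of the invariant measures being irrelevant to the regular variation index).

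The main obstacle will be to confirm that $\tilde{D}$ is a genuine special set for $X^{\mathcal{M}}$. The finite-measure requirement $\mu_{\mathcal{M}}(\tilde{D})<\infty$ is already in hand, but I would still need the uniform occupation bound $\sup_{(x,b)}\mathbb{E}_{(x,b)}\big[\sum_{i=1}^{\tau_B}\mathbb{I}\{X^{\mathcal{M}}_i\in\tilde{D}\}\big]<\infty$ for every Harris set $B$ of the split chain --- a delicate point, since Harris sets of $X^{\mathcal{M}}$ are not in general cylinders over Harris sets of $X$. I would resolve it by reducing the occupation of $\tilde{D}$ to that of $D$ (the two coincide after projection onto $E$) and by invoking the accessible atom $A_K=K\times\{1\}$ as a common regeneration reference, through which the excursion structure of $X^{\mathcal{M}}$ is controlled by that of $X$; this is precisely the sense in which, as recalled above, the split chain inherits the stochastic stability of $X$. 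Once $\tilde{D}$ is certified special, applying Definition \ref{def:regular_chain} to the pair $(\nu_{\mathcal{M}},\tilde{D})$ yields the $\beta$-regularity of $X^{\mathcal{M}}$ and completes the argument.
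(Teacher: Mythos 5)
First, a point of comparison: the paper itself gives no proof of this proposition --- it is quoted directly from \cite[pp 19]{Chen1999} --- so your attempt can only be assessed on its own merits. The skeleton you set up is the natural one, and its first half is essentially sound: taking the witnessing pair $(\nu,D)$ for $X$, the set $\tilde{D}=D\times\{0,1\}$, and the product initial law, one gets $G^{\mathcal{M}}_{\nu_{\mathcal{M}},\tilde{D}}=G_{\nu,D}$ because the split chain's invariant measure is $\mu\otimes\mathrm{Ber}_\delta$ and the first coordinate is a $\Pi$-chain. One slip, though: your marginal identity does \emph{not} hold ``for any initial law $\nu_{\mathcal{M}}$ whose $E$-marginal is $\nu$''. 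The first coordinate of $X^{\mathcal{M}}$ evolves according to $\Pi$ only if, conditionally on $X_0=x\in K$, the flag $Y_0$ is Bernoulli($\delta$); otherwise the first transition out of $K$ follows $\Phi$ or $(1-\delta)^{-1}(\Pi(x,\cdot)-\delta\Phi)$ rather than $\Pi(x,\cdot)$. This is harmless --- take $\nu_{\mathcal{M}}=\nu\otimes\mathrm{Ber}_\delta$ --- but as stated the claim is false whenever $\nu(K)>0$.

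The genuine gap is the step you yourself call the main obstacle: certifying that $\tilde{D}$ is special for $X^{\mathcal{M}}$. You do not prove it, and the plan you sketch does not go through as stated: for a split-chain Harris set such as $\tilde{B}=B\times\{1\}$, the hitting time $\tau_{\tilde{B}}$ is in general strictly larger than the hitting time of the projected set $B$ (the chain may visit $B\times\{0\}$ many times first), so the occupation of $\tilde{D}$ before $\tau_{\tilde{B}}$ is \emph{not} controlled by the occupation of $D$ before $\tau_B$, and the atom $A_K$ plays no evident role in fixing this. What actually closes the gap is the product structure of $\Pi_{\mathcal{M}}$: the flag attached at each step is drawn $\mathrm{Ber}_\delta$ independently of the position entered. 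Concretely: (a) any Harris set $\tilde{B}$ of the split chain contains a set $B\times\{b_0\}$ with $\mu(B)>0$, and shrinking $\tilde{B}$ only increases the occupation, so one may assume $\tilde{B}=B\times\{b_0\}$; (b) writing $T_1<T_2<\cdots$ for the successive visits of the first coordinate to $B$, at each $T_k$ the flag equals $b_0$ with probability $\delta$ or $1-\delta$ independently of the events determining the $T_k$'s, so the number of $B$-visits needed to hit $B\times\{b_0\}$ is geometric; (c) by the strong Markov property and the specialness of $D$ for $X$, each inter-visit block contributes at most $1+\sup_{x\in E}\E_x[\sum_{i=1}^{\tau_B}\I\{X_i\in D\}]<\infty$ to the occupation of $\tilde{D}$ (the ``$+1$'' absorbing the one modified first step out of $K$). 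Summing the geometric series gives the required uniform bound. Without an argument of this kind --- or an explicit appeal to the preservation of specialness under splitting in \cite{Nummelin1984} --- your proof is incomplete precisely at its decisive step.
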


Hence, $X$'s regularity index $\beta$ is the regular variation index of the
conditional survivor function of the hitting time $\tau_{A_K}=\inf\{n\geq 1:\;
    (X_n,Y_n)\in A_K\}$ given $(X_0,Y_0)\in A_K$. However, the return times to $A_K$ are
not observable, just like the sample path of the Nummelin extension, and cannot be
straightforwardly exploited from a statistical perspective. We shall explain in
subsection \ref{subsec:extension_nummelin} how estimators tailored to the
regenerative case can be nevertheless extended to the pseudo-regenerative case in
practice by means of the \textit{plug-in} approximation procedure originally proposed
in \cite{Bertail2006} in the positive recurrent case.

\subsection{Obtaining samples form the split chain}\label{sec:samples_split_chain}

Proposition \ref{prop:split_chain_beta_regular} and the algorithmic construction
described after Eq. \eqref{minor} guarantee that if the chain satisfies the
minorization condition $\mathcal{M}=\mathcal{M}(1,K,\delta,\Psi)$, and $K,\delta$ and
$\Psi$ are known, then we can generate samples of the split chain, which is atomic
and has the same $\beta$ as the original chain. Assume the existence of a
$\sigma$-finite measure $\lambda$ of reference on $(E,\mathcal{E})$ that dominates
the conditional probability measures $\Pi(x,\; dy)$, $x\in E$, and the initial
distribution $\nu$: $\Pi(.,dy)=\pi(.,y)\lambda(dy)$ and $\nu(dy)=g(y)\lambda(dy)$.
Notice incidentally that the measure $\Psi$ involved in $\mathcal{M}$ is then
absolutely continuous w.r.t. $\lambda$ as well: $\Psi(dy)=\psi(y)\lambda(dy)$ and
then $\pi\left(x,y\right)\geq \delta \psi\left(y\right)$ for all $(x,y)\in K^2$. As
shown in Section 3.2 in \cite{Bertail2006}, given $X^{(n+1)}=\left(X_1, \ldots,
    X_{n+1}\right)$, samples from the distribution of $Y^{(n)}=\left(Y_1, \ldots\right.$,
    $\left.Y_n\right)$ can be obtained as follows. From $i=1$ to $n$, the r.v. $Y_i$ is
drawn from a Bernoulli distribution with parameter $\delta$, unless $X$ hits the
small set $\smallSet$ at time $i$: in the latter case, $Y_i$ is drawn from a
Bernoulli distribution with parameter $\delta \psi\left(X_{i+1}\right) /
    \pi\left(X_i, X_{i+1}\right)$. Given that $A_K=K\times \{1\}$ is an atom for the
split chain, and the statistics under study in this paper only depends on the size of
the regeneration blocks, sampling $Y_i$ when $X_i\in K$ is sufficient here. The
accuracy of the estimator improves as the (random) number of samples (the number of
regeneration blocks namely) increases. This number is influenced by the size of the
chosen small set and how frequently the chain visits it in a finite-length
trajectory. It is also affected by the sharpness of the lower bound in the
minorization condition. Essentially, there is a trade-off that can be described as
follows. Increasing the size of the small set $\smallSet$ used for constructing the
pseudo-blocks naturally increases the number of time points that could determine a
block (or a cut in the trajectory). However, it also reduces the probability of
cutting the trajectory, as the uniform lower bound for $\pi(x,y)$ over $\smallSet^2$
then decreases. This suggests a criterion for selecting the small set $\smallSet$:
choose a small set that maximizes the maximum expected number of data blocks given
the trajectory, that is
\begin{equation*}
    N_n(\smallSet)=\mathbb{E}_\nu\left[\sum_{i=1}^n \mathbb{I}\left\{X_i \in \smallSet, Y_i=1\right\} \mid X^{(n+1)}\right].
\end{equation*}
In Section 3.6 of \cite{Bertail2006}, a data-driven approach to select the small set is
proposed for the cases where the chain takes real values. The idea relies on the fact
that, in many cases, for a well-chosen $x_0$ and $\epsilon$ small enough, certain intervals
$V_{x_0,\epsilon}=[x_0-\epsilon,x_0+\epsilon]$ are small sets, with the minorization
measure $\Psi$ being the Lebesgue measure on $V_{x_0,\epsilon}$. Given a point $x_0$
(generally taken as the mean or the median of the $X_i$'s), the proposed algorithm finds
the value of $\epsilon$ that maximizes the expected number of regeneration blocks, that
is
\begin{equation*}
    N_n\left(V_{x_0,\epsilon}\right)=\frac{\delta\left(V_{x_0,\epsilon}\right)}{2\epsilon} \sum_{i=1}^n
    \frac{\mathbb{I}\left\{\left(X_i, X_{i+1}\right) \in V_{x_0,\epsilon}^2\right\}}{\pi\left(X_i, X_{i+1}\right)},
\end{equation*}
where $\delta\left(V_{x_0,\epsilon}\right)=2\epsilon\inf_{(x,y)\in V^2_{x_0,\epsilon}}\pi(x,y)$.
Then, the samples of the split chain can be obtained by following the procedure
described at the begining of this subsection with
$K=V_{x_0,\epsilon}$, $\delta=2\epsilon\inf_{(x,y)\in V^2_{x_0,\epsilon}}\pi(x,y)$
and $\psi(y)=1/(2\epsilon)$.

\end{appendices}

\end{document}